\pdfoutput=1

\documentclass[final, reqno, 10pt]{amsart}

 \usepackage{amsmath,amsthm,amsfonts}
\usepackage{latexsym,mathabx}
\usepackage{amssymb}
\usepackage{bbold}
\usepackage{enumerate} 
\usepackage{color}
\usepackage{ulem}
\usepackage{tikz}
\usepackage{hyperref}

\newtheorem{theorem}{Theorem}[section]
\newtheorem{lemma}[theorem]{Lemma}
\newtheorem{prop}[theorem]{Proposition}
\newtheorem{corollary}[theorem]{Corollary}
\newtheorem{cor}[theorem]{Corollary}
\newtheorem{rmk}[theorem]{Remark}
\theoremstyle{remark}

\newtheorem{example}[theorem]{Example}

\newtheorem{definition}{Definition}[section]
\newcommand{\set}{\mathbb}

\newcommand{\les}{\lesssim}

\newcommand{\mc}{\mathcal}
\newcommand{\be}{\begin{equation}}
\newcommand{\ee}{\end{equation}}
\newcommand{\bee}{\begin{align}}
\newcommand{\eee}{\end{align}}
\newcommand{\ba}{\begin{array}}

\newcommand{\ea}{\end{array}}
\newcommand{\bpm}{\begin{pmatrix}}
\newcommand{\epm}{\end{pmatrix}}

\newcommand{\la}{\langle}
\newcommand{\ra}{\rangle}

\def\calL{\mc{L}}
\def\wtil{\widetilde}

\def\dist{\mathrm{dist}}

\DeclareMathOperator{\supp}{supp}

\DeclareMathOperator{\Imim}{Im}

\renewcommand{\Im}{\Imim}
\newcommand{\R}{\mathbb{R}}
\newcommand{\C}{\mathbb{C}}
\newcommand{\Z}{\mathbb{Z}}
\def\Sph{\mathbb{S}}
\newcommand{\N}{\mathbb{N}}

\newcommand{\one}{{\mathbb{1}}}
\newcommand{\EQ}[1]{\begin{equation}  \begin{split} #1 \end{split} \end{equation} }

\def\eps{\varepsilon}
\def\what{\widehat}

\def\calB{\mc{B}}

\def\calS{\mc{S}}

\def\nn{\nonumber}

\def\wtild{\widetilde}

\def\arcsn{\mathrm{arcsn}}
\def\sn{\mathrm{sn}}
\def\eps{\varepsilon}
\def\arcsinh{\mathrm{arcsinh}}
\def\D{\set{D}}
\def\Rect{\mathcal{R}}
\def\ol{\overline}
\def\del{\partial}
\def\Erw{\mathbb{E}}
\def\S{\set{S}}
\def\calE{\mathcal{E}}
\def\cC{\mathcal{C}}
\renewcommand{\em}{\it}
\def\uk{\underline{k}}
\def\uell{\underline{\ell}}

\numberwithin{equation}{section}

\begin{document}

\title[A higher dimensional Bourgain-Dyatlov FUP]{A higher dimensional Bourgain-Dyatlov fractal uncertainty principle}

\author{Rui Han}
\thanks{The authors thank Jean Bourgain for his interest in this paper, and thank Semyon Dyatlov for his comments on a preliminary version which improved our presentation.
We thank the IAS, Princeton, for its hospitality during the 2017-18 academic year.
This material is based upon work supported by the National Science Foundation under Grant No. DMS-1638352.}
\address{School of Mathematics, IAS, 1 Einstein Drive, Princeton, NJ 08540 }
\email{rhan@ias.edu}

\author{Wilhelm Schlag}
\thanks{The second author was partially supported by NSF grant DMS-1500696
during the preparation of this work.}
\address{Department of Mathematics, The University of Chicago, 5734 South University Avenue, 
Chicago, IL 60637}
\email{schlag@math.uchicago.edu}

\begin{abstract}
We establish a version of the fractal uncertainty principle, obtained by Bourgain and Dyatlov in 2016, in higher dimensions. 
The Fourier support is limited to sets $Y\subset \R^d$ which can be covered by finitely many products of $\delta$-regular sets in one dimension, but relative to arbitrary axes. 
Our results remain true if Y is distorted by diffeomorphisms.
Our method combines the original approach by Bourgain and Dyatlov, in the more quantitative 2017 rendition by Jin and Zhang, with Cartan set techniques. 
\end{abstract}

\maketitle

\section{Introduction}

Bourgain-Dyatlov~\cite{BD} proved the following result. 

\begin{theorem}\label{thm:BD16}
Let $X,Y\subset\R$ and $N\ge1$ be such that (i) $X\subset [-1,1]$ is $\delta$-regular with constant $C_R$ on scales $N^{-1}$ and $1$, (ii) $Y\subset[-N,N]$,  is $\delta$-regular with constant $C_R$ on scales $1$ and $N$. Then there exist constants $\beta>0$, and $C$ depending on $\delta, C_R$ so that 
\[
\|f\|_{L^2(X)} \le C N^{-\beta} \|f\|_{L^2(\R)}
\]
for all $f\in L^2(\R)$ with $\supp(\hat{f})\subset Y$. 
\end{theorem}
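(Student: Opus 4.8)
\medskip
\noindent\textbf{Proof strategy.}
The plan is to run the Bourgain--Dyatlov scheme in the quantitative form of Jin--Zhang. \emph{Reduction to a semiclassical bound.} Put $h=1/N$ and rescale the Fourier variable by setting $\tilde Y=Y/N\subset[-1,1]$, so that the hypothesis on $Y$ becomes: $\tilde Y$ is $\delta$-regular with a comparable constant on scales $h$ and $1$. For $f$ with $\supp(\hat f)\subset Y$ the substitution $\xi=N\eta$ gives $f(x)=N\int_{\tilde Y}\hat f(N\eta)\,e^{2\pi i Nx\eta}\,d\eta$, hence
\[
\|f\|_{L^2(X)}\le\bigl\|\one_X\,\mathcal B_h\,\one_{\tilde Y}\bigr\|_{L^2\to L^2}\,\|f\|_{L^2},\qquad \mathcal B_h g(x)=h^{-1/2}\int e^{2\pi i x\eta/h}g(\eta)\,d\eta,
\]
and $\mathcal B_h$ is essentially unitary. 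So it suffices to show $\|\one_X\mathcal B_h\one_{\tilde Y}\|\le Ch^{\beta}$ for $X,\tilde Y\subset[-1,1]$ each $\delta$-regular on scales $h$ and $1$; replacing these sharp cutoffs by smooth localizations at a fixed scale is a routine, harmless step. Only two consequences of $\delta$-regularity enter. \emph{Upper regularity}: for every $\rho\in[h,1]$, each of $X,\tilde Y$ is covered by $\le C(C_R)\,\rho^{-\delta}$ intervals of length $\rho$. \emph{Porosity}: since $\delta<1$ and the regularity is two-sided, there is $K=K(\delta,C_R)$ so that every length-$\rho$ interval meeting $X$ (resp.\ $\tilde Y$) has a sub-interval of length $\rho/K$ disjoint from it --- partitioning $I$, $|I|=\rho$, into $K$ equal pieces, if all met $X$ then $C_R^{-1}K(\rho/K)^\delta\le\mu(I)\le C_R\rho^\delta$ forces $K^{1-\delta}\le C_R^2$, false for large $K$.

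\emph{Iteration on scales.} The structural (``submultiplicative'') lemma says: if $\|\one_{X'}\mathcal B_{\sqrt h}\one_{\tilde Y'}\|\le\beta'$ for all sets $X',\tilde Y'$ of the same type at scale $\sqrt h$, then $\|\one_X\mathcal B_h\one_{\tilde Y}\|\le C(\beta')^2$ for sets at scale $h$ --- the square arising because, after splitting the scale interval $[h,1]$ at its geometric mean and rescaling, the scale-$h$ operator decomposes into a composition of two scale-$\sqrt h$ operators, with the $\le C\rho^{-\delta}$ count of intermediate pieces absorbed by almost-orthogonality coming from their mutual $\rho$-separation. Iterating $\sim\log\log(1/h)$ times reduces the whole estimate to a strict bound $\beta_0<1$ at a fixed scale $h_0$ (with a little extra bookkeeping so that $\beta_0$ beats the combinatorial constant); the output is then $\beta_0^{\,c\log(1/h)}\le Ch^{\beta}$ with $\beta=\beta(\delta,C_R)>0$, as required.

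\emph{The engine.} It remains to prove the fixed-scale bound: for $X_0,Y_0\subset[-1,1]$ that are unions of boundedly many intervals and each omit a proportion $\ge\nu=1/K$ of every interval of length $\sim h_0$ in $[-1,1]$, one has $\|\one_{X_0}\mathcal B_{h_0}\one_{Y_0}\|_{L^2\to L^2}\le 1-c(\nu)$. Here the trivial bound is $1$ since $\mathcal B_{h_0}$ is essentially unitary, and the gain is forced by analyticity: if $g=\one_{Y_0}g$ and $u=\mathcal B_{h_0}g$, then $u$ is close to an entire function of exponential type $\lesssim h_0^{-1}$, so by Jensen's formula on a disc --- equivalently the theorem of two constants, or a harmonic-measure estimate --- a $u$ normalized by $\|u\|_{L^2([-1,1])}=1$ cannot be uniformly small on the positive-proportion set $X_0$: $\|u\|_{L^2(X_0)}\le\eps$ forces $\|u\|_{L^2([-1,1])}\lesssim\eps^{c(\nu)}$. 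Thus $\|\one_{X_0}u\|_2\le(1-c(\nu))\|u\|_2$; feeding in the porosity of $Y_0$ as well (by duality, or a second application on the Fourier side) closes the bound strictly below $1$.

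\emph{Main obstacle.} The two hard points are: (i) making the submultiplicativity precise, since the kernels $e^{2\pi i x\eta/h}$ do not literally factor across scales --- one localizes $x$ and $\eta$ to the relevant small intervals, extracts the leading phase, and controls the cross terms between the $\le C\rho^{-\delta}$ pieces by almost-orthogonality, using their $\rho$-separation together with stationary/non-stationary phase; and (ii) the complex-analytic base estimate, where $c(\nu)$ must depend only on the porosity proportion and be uniform in the bounded number and the configuration of the intervals --- this is exactly the Bourgain--Dyatlov holomorphic-function lemma, and it is the one ingredient not furnished by soft uncertainty-principle arguments. (In the higher-dimensional problem that is the subject of this paper, the engine is replaced by a Cartan-set version; the one-dimensional statement above needs only the scheme just described.)
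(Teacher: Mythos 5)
Your fixed--scale ``engine'' is essentially correct --- it is the quantitative Logvinenko--Sereda/two-constants estimate for functions of bounded band width, which is what Propositions~\ref{prop:B1B2}--\ref{prop:loc} of this paper establish via subharmonic functions. The gaps lie in the two steps that are supposed to promote it to the full theorem. The ``submultiplicative lemma'' ($\|\one_{X}\mathcal B_h\one_{\tilde Y}\|\le C\beta'^2$ from the bound $\beta'$ at scale $\sqrt h$) is asserted, not proved, and it is not a routine almost-orthogonality argument. Splitting $x=x_0+x_1$, $\eta=\eta_0+\eta_1$ into coarse centers and fine remainders, the phase $x\eta/h$ produces a scale-$\sqrt h$ oscillation only in the \emph{mixed} pairings (coarse $x$ against fine $\eta$ and vice versa), while the fine--fine pairing is a nondegenerate $O(1)$ phase; the block $\one_{X_j}\mathcal B_h\one_{Y_k}$ is therefore not a rescaled scale-$\sqrt h$ FUP operator for the fine structures, and the coarse variables form discrete, $\sqrt h$-separated point sets rather than regular sets. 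Cotlar--Stein gives nothing for free here: $T_{jk}^*T_{jk'}$ vanishes only for distinct $j$, and the sum over the $\sim h^{-\delta/2}$ values of $k'$ is exactly the loss you need to avoid. A clean factorization of this kind is known only for exactly self-similar \emph{discrete} Cantor sets (Dyatlov--Jin); if it held for general regular sets it would yield a proof far simpler than any in the literature. Moreover, even granting the lemma, the induction does not close: after $k$ doublings the bound is $C^{2^k-1}\beta_0^{2^k}$, which beats $1$ only if $\beta_0<1/C$, whereas the engine supplies only $\beta_0=1-c(\nu)$ close to $1$; ``a little extra bookkeeping'' cannot bridge this, because improving $\beta_0$ to a small number at a fixed scale is itself equivalent to the theorem at that scale.

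The deeper omission is that your scheme never confronts the actual crux of Bourgain--Dyatlov: an uncertainty principle for functions whose Fourier support is a \emph{thin but large} set. In the correct iteration (the paper's Theorem~\ref{thm:FUP2}, following \cite{BD,JZ}) one gains a fixed factor $(1-\gamma_0)$ at \emph{each} of the $\sim\log_L N$ scales $L^{-n}$, using the porosity of $X$; multiplying these gains gives $N^{-\beta}$ directly, with no doubling. But the gain at scale $L^{-n}$ requires showing that a function with Fourier support in $L^{-n}Y$ --- a set of diameter $NL^{-n}$, which is unbounded as $N\to\infty$ for small $n$ --- still has a definite proportion of mass on the gaps of $X$. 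This cannot follow from fixed-bandwidth Logvinenko--Sereda; it uses the $\delta_1$-regularity of $Y$ through the existence of a \emph{damping function}, i.e.\ a compactly supported $\psi$ with $|\what\psi|\le\exp(-c_3\Theta(|\xi|)|\xi|)$ on $Y$, constructed via the Beurling--Malliavin/outer-function machinery (Definition~\ref{def:damp}, Lemma~\ref{lem:regularsetdamping}, Corollary~\ref{cor:UPY}). This is the one genuinely hard harmonic-analysis ingredient of the theorem, and your proposal has no substitute for it: the regularity of $Y$ enters your argument only through covering counts and porosity, which do not suffice.
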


The $\delta$-regularity condition is akin to asking for a  Frostman measure at dimension~$\delta$, see Definition~\ref{def:deltareg}  below for the precise statement.  Theorem~\ref{thm:BD16} is most interesting for $\delta$ close to~$1$. For $\delta<\frac12$, Cauchy-Schwarz and measure estimates in phase space suffice. The $\beta$ was made effective later by Jin and Zhang~\cite{JZ}.  Combining this  fractal uncertainty principle with earlier results by Dyatlov and Zahl~\cite{DyZ} led to a breakthrough on the  existence for an essential spectral gap for convex co-compact hyperbolic surfaces. This refers to a strip to the left of the $1/2$ line in the complex plane in which the Selberg zeta function has only finitely many zeros. This result can be reformulated in terms of strips below the real axis in which the meromorphic continuation of the resolvent of the Laplacian of the hyperbolic surface exhibits only finitely many resonances. This in turn can be rephrased as a decay rate of the resolvent for large energies within such  a strip. 

For other applications see~\cite{BD2, DyJ, DyJ2}, and for a survey~\cite{DyaExp}. 

It remained an open problem to establish an analogue of Theorem~\ref{thm:BD16} in higher dimensions. This is main goal of this paper. Here we focus entirely on this problem in harmonic analysis, and plan to present applications elsewhere.

\medskip

We now present our main results. Let $X\subset [-1,1]^d$ be a $\delta$-regular set in the sense of Bourgain-Dyatlov with $\delta\in (0,d)$ and constant~$C_R$, on scales $N^{-1}$ to~$1$. 
In~\cite{BD} this concept is defined only on the line, but the definition, together with its main properties, carries over to higher dimensions. Strictly speaking, we do not need the regularity condition per se, but rather the porosity property of such sets as stated precisely in Definition~\ref{def:porous} below. Second, let $Y\subset [-N,N]^d$ be of the form 
\EQ{ \label{eq:Y intro}
{{ Y=\Big \{ \sum_{i=1}^d  \xi_{i} \vec e_{i} \:|\: \xi_{i}  \in Y_{i} \Big\}, }}
}
where $\vec e_{i}$ are unit vectors with $|\det(\vec e_{1},\ldots,\vec e_{d})|\ge \eps_0$, a positive constant (possibly small), 
and $Y_{i}\subset [-{{2}}N,{{2}}N]$ is a $\delta_1$-regular set with $\delta_1\in (0,1)$ and constant~$C_R$, on scales $1$ to~$N$.

\begin{theorem}
\label{thm:main1}
Let $X,Y$ be as in the previous paragraph in dimension $d\ge2$. 
Then there exists constant
$C=C(d, \eps_0, \delta, \delta_1, C_R)>0$ such that for 
{{
\[\beta= \exp\Big\{ -\exp\Big[ \Big(\frac{C_R^2}{\iota \delta_1(1-\delta_1)}\Big)^{\frac{6-2\delta}{(1-\delta_1)(2-\delta)}}\Big] \Big\},\]
where $\iota>0$ is a small constant depending on $d$ and $\varepsilon_0$,}} and for 
any $f\in L^2(\R^d)$ with $\supp(\what{f})\subset Y$ one has 
\EQ{\label{eq:BDd}
\|f\|_{L^2(X)} \le C N^{-\beta} \|f\|_{L^2(\R^d)}
}
for sufficiently large $N\ge N_0(d, \eps_0,\delta,\delta_1, C_R)$. 
\end{theorem}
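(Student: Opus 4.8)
The plan is to follow the Bourgain--Dyatlov strategy, in the quantitative Jin--Zhang form, but to reduce the $d$-dimensional statement to a one-dimensional statement along each axis $\vec e_i$ by slicing. Fix the coordinate system $(\xi_1,\dots,\xi_d)$ adapted to the (not necessarily orthogonal) basis $\vec e_1,\dots,\vec e_d$; since $|\det(\vec e_1,\dots,\vec e_d)|\ge\eps_0$, this change of variables distorts $L^2$ norms by factors controlled by $\eps_0$ and $d$ only, so it is harmless. In these coordinates $\supp(\what f)\subset Y_1\times\cdots\times Y_d$, a product set. The key analytic input, exactly as in \cite{BD,JZ}, is an estimate for the Fourier transform restricted to $Y_i$ acting as a map into functions on a porous set: one wants, for each $i$, a gain $N^{-\beta_i}$ coming from the one-dimensional fractal uncertainty principle (Theorem~\ref{thm:BD16}) applied in the $\xi_i$ variable, with all other variables frozen.

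The main steps, in order, are as follows. First I would record the coordinate change and the resulting bounds relating $\|f\|_{L^2(X)}$, $\|f\|_{L^2(\R^d)}$ to their counterparts in the $\vec e_i$-coordinates, together with the porosity of $X$ (as in Definition~\ref{def:porous}) which passes through the linear map with constants depending only on $\eps_0,d,C_R$. Second, I would set up the iterated one-dimensional scheme: writing $f=f(\xi_1,\dots)$ and integrating out variables one at a time, apply the Bourgain--Dyatlov/Jin--Zhang argument in the $j$-th variable while treating $(\xi_i)_{i\ne j}$ as parameters. The subtlety is that after slicing, the ``$X$'' set one sees in a single variable is a slice of the $d$-dimensional porous set, and one must verify that a positive proportion of such slices remain porous (or $\delta$-regular) with uniformly controlled constants — this is where the Cartan set techniques advertised in the abstract enter, to handle the fibers on which porosity degrades. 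Third, I would assemble the per-variable gains $N^{-\beta_i}$ into a single gain $N^{-\beta}$; because the bad fibers cost us something at each of the $d$ stages, and because the one-dimensional $\beta$ from \cite{JZ} is already doubly exponentially small in the regularity parameters, the resulting $\beta$ acquires the nested $\exp\{-\exp[\cdots]\}$ form with the exponent $\frac{6-2\delta}{(1-\delta_1)(2-\delta)}$ and the auxiliary small constant $\iota=\iota(d,\eps_0)$ absorbing the fiber losses.

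The hard part, I expect, is precisely the fiber analysis in the slicing step: the fractal uncertainty principle is a statement about how Fourier support on a fractal forces spreading in physical space, and when we freeze $d-1$ variables we need the physical-space set $X$ to still ``see'' a fractal of the right dimension in the remaining variable — but $X$ is only assumed porous as a subset of $\R^d$, and a generic line can miss it entirely. So one cannot argue fiberwise in a naive way; instead one must run the multiplicativity/submultiplicativity machinery of Bourgain--Dyatlov directly in $d$ dimensions, using the product structure of $Y$ to factor the relevant tree of frequency intervals, and using a Cartan-type covering to control the set of ``small'' physical coordinates where the iteration would otherwise stall. Managing the bookkeeping of constants through this $d$-fold iteration — so that the final $\beta$ depends only on $d,\eps_0,\delta,\delta_1,C_R$ and not on $N$ — is the technical heart of the argument, and is what forces the explicit but extremely small value of $\beta$ in the statement.
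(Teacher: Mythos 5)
There is a genuine gap: the mechanism you propose — freezing $d-1$ of the frequency variables and applying the one-dimensional Theorem~\ref{thm:BD16} fiberwise — is not what makes the higher-dimensional statement work, and you yourself identify the fatal obstruction (a slice of a porous set $X\subset\R^d$ along a line can be empty or lose all regularity, so there is no uniform one-dimensional FUP on the fibers). Your proposed repair is only a gesture: ``run the multiplicativity machinery directly in $d$ dimensions'' and ``use a Cartan-type covering to control the set of small physical coordinates.'' This misplaces both of the actual ingredients. In the paper the Cartan sets have nothing to do with bad fibers of $X$: they are used on the complex-analytic side, to control the exceptional set where the pluri-subharmonic function $\log|F|$ (with $F$ the analytic continuation of a band-limited $f$ to a product of thin rectangles) can be small. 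That yields a genuinely $d$-dimensional $L^2$ localization estimate (Lemma~\ref{lem:Cart2} and Proposition~\ref{prop:locddim}): the full $L^2$ norm of a band-limited function is controlled by its $L^2$ norm on one small cube per unit cube, to a power $\kappa$, times an exponentially weighted Fourier norm. No slicing of $X$ occurs anywhere.

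The second missing ingredient is the damping function (Definition~\ref{def:damp}): to exploit the thinness of $Y$ in frequency one constructs a compactly supported multiplier $\psi$ with $|\what\psi|$ decaying like $\exp(-c_3\Theta(|\xi|_1)|\xi|_1)$ on $Y$, via the weak (outer-function) form of Beurling--Malliavin as in \cite{JZ}. This is the \emph{only} place the product structure of $Y$ and the one-dimensional $\delta_1$-regularity of the factors $Y_i$ enter: $\psi$ is taken to be a product of one-dimensional damping functions in the $\vec e_i$-coordinates (Lemma~\ref{lem:admissibledamping}), which is why the hypothesis on $Y$ is a product/cover-by-products condition rather than a regularity condition on $Y$ itself. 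Combining the damping function with the Cartan localization gives the quantitative uncertainty principle of Corollary~\ref{cor:UPY}, and the final step is an induction on scales (Theorem~\ref{thm:FUP2}): at each scale $L^{-n}$ the porosity of $X$ supplies a missing subcube, Corollary~\ref{cor:UPY} converts that into a fixed multiplicative loss $(1-\gamma_0)$ for the truncated function, and iterating over $\sim\log N/\log L$ scales produces $N^{-\beta}$. Without the damping-function construction and the induction on scales, your outline cannot be completed, and the doubly exponential form of $\beta$ cannot be traced.
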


{{As a corollary of our main theorem, we allow $Y$ to be covered by the union of a finite number of $Y_{j}$'s, each satisfying
\eqref{eq:Y intro} but with a uniform $\eps_0$.}}
\EQ{ \label{eq:YYj}
Y &\subset \bigcup_{j=1}^m Y_j \\
Y_j &= \Big \{ \sum_{i=1}^d  \xi_{j,i} \vec e_{j,i} \:|\: \xi_{j,i}  \in Y_{j,i} \Big\}.
}
{{Furthermore, the number $m$ of covers can grow in $N$.
To be specific, we prove}}
 
\begin{corollary}
\label{thm:main2}
Let $X$ be as above and {{$Y$ be as in \eqref{eq:YYj}.}} 
Suppose $m$ grows with $N$ as follows
\[
m=\lfloor N^{\gamma} \rfloor,
\]
in which $0\leq \gamma<\beta$.
Then for any $f\in L^2(\R^d)$ with $\supp(\what{f})\subset Y$, {{and constants $C, \beta$ in Theorem \ref{thm:main1}}}, one has 
\EQ{\label{eq:BDd2}
\|f\|_{L^2(X)} \le C N^{\gamma-\beta} \|f\|_2,
}
for sufficiently large $N\ge N_0(d, \eps_0,\delta, \delta_1, C_R)$. 
\end{corollary}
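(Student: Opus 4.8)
The plan is to deduce Corollary~\ref{thm:main2} directly from Theorem~\ref{thm:main1} by a union bound over the $m$ pieces $Y_j$, exploiting that Theorem~\ref{thm:main1} applies to each $Y_j$ with constants uniform in $j$ (since the $\vec e_{j,i}$ share a common lower bound $\eps_0$ on the determinant and the $Y_{j,i}$ share the same regularity data $\delta_1, C_R$). First I would, given $f\in L^2(\R^d)$ with $\supp(\what f)\subset Y\subset\bigcup_{j=1}^m Y_j$, introduce a (measurable) partition $Y=\bigsqcup_{j=1}^m Z_j$ with $Z_j\subset Y_j$ (e.g. $Z_1=Y\cap Y_1$, $Z_j = Y\cap Y_j\setminus\bigcup_{k<j}Y_k$), and set $f_j$ to be the function with $\what{f_j}=\what f\,\one_{Z_j}$. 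Then $f=\sum_{j=1}^m f_j$, $\supp(\what{f_j})\subset Y_j$, and by Plancherel $\sum_j\|f_j\|_2^2=\|f\|_2^2$.

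Next I would apply Theorem~\ref{thm:main1} to each $f_j$: since $X$ is fixed and $Y_j$ satisfies \eqref{eq:Y intro} with the uniform constant $\eps_0$ and the same $\delta_1, C_R$, the theorem yields one and the same pair of constants $C,\beta$ (those of Theorem~\ref{thm:main1}) and the same threshold $N_0$, so that $\|f_j\|_{L^2(X)}\le C N^{-\beta}\|f_j\|_{L^2(\R^d)}$ for all $N\ge N_0$. Then by the triangle inequality in $L^2(X)$ followed by Cauchy--Schwarz over the $m$ indices,
\EQ{\nonumber
\|f\|_{L^2(X)} \le \sum_{j=1}^m \|f_j\|_{L^2(X)} \le C N^{-\beta}\sum_{j=1}^m \|f_j\|_2 \le C N^{-\beta}\, m^{1/2}\Big(\sum_{j=1}^m\|f_j\|_2^2\Big)^{1/2} = C N^{-\beta} m^{1/2}\|f\|_2.
}
With $m=\lfloor N^\gamma\rfloor\le N^\gamma$ this gives $\|f\|_{L^2(X)}\le C N^{\gamma/2-\beta}\|f\|_2$, which is even slightly stronger than \eqref{eq:BDd2}; in any case, since $\gamma<\beta$ one may absorb the loss and state the bound as $C N^{\gamma-\beta}\|f\|_2$ (replacing $\gamma$ by $\gamma/2$ throughout is harmless, or one simply notes $N^{\gamma/2-\beta}\le N^{\gamma-\beta}$ for $N\ge1$).

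The only point requiring a little care — and the closest thing to an obstacle — is the uniformity of the constants in Theorem~\ref{thm:main1} across the $m=m(N)$ pieces: one must check that $C$, $\beta$, and $N_0$ genuinely depend only on $(d,\eps_0,\delta,\delta_1,C_R)$ and not on the particular axes $\vec e_{j,i}$ or the particular regular sets $Y_{j,i}$, and that this is not disturbed by letting $m$ grow with $N$. This is immediate from the statement of Theorem~\ref{thm:main1} as written (the constants there are declared to depend only on those five parameters), so no genuine difficulty arises; the growth of $m$ is absorbed cleanly by the $m^{1/2}$ factor precisely because $\gamma<\beta$. One should also remark that the same $N_0$ works simultaneously for all $j$, which again is automatic since $N_0$ in Theorem~\ref{thm:main1} depends only on $(d,\eps_0,\delta,\delta_1,C_R)$.
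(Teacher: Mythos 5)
Your proposal is correct and is essentially the paper's own argument: the authors dispose of Corollary~\ref{thm:main2} with the single remark that it ``follows from Theorem~\ref{thm:main1} by the triangle inequality,'' which is exactly your decomposition $f=\sum_j f_j$ with $\supp(\what{f_j})\subset Y_j$ followed by the uniform application of Theorem~\ref{thm:main1} and summation over $j\le m=\lfloor N^\gamma\rfloor$. Your Cauchy--Schwarz step even yields the slightly sharper exponent $\gamma/2-\beta$, which of course implies the stated bound.
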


\smallskip

{{Theorem~\ref{thm:main1} and Corollary~\ref{thm:main2}}} require that the Fourier support $Y$ may be covered by products of regular sets in one dimension {\em along lines}, cf.~\eqref{eq:YYj}. Our third result asserts that one may distort these lines by means of diffeomorphisms which are obtained as follows. 
Let $\Psi_0:[-1,1]^d\to [-1,1]^d$ by a smooth map with a smooth inverse so that 
 \EQ{
 \label{eq:Psi0}
 \| D\Psi_0\|_\infty + \| D\Psi_0^{-1}\|_\infty + \| D^2\Psi_0\|_\infty  \le D_0. 
 }
where the supremum norm is taken over the cube.  For $N\ge1$ we consider the diffeomorphisms $\Phi_N(x) = N\Psi_0(x/N)$ from the cube 
$[-N,N]^d\to [-N,N]^d$.   Then 
\EQ{
 \label{eq:PhiN}
 \| D\Phi_N\|_\infty + \| D\Phi_N^{-1}\|_\infty + N\| D^2\Phi_N\|_\infty  \le C(d,D_0), 
 }
where the supremum norm is taken over the cube $[-N,N]^d$.   

\begin{theorem}
\label{thm:main3}
  Theorem~\ref{thm:main1} remains correct with   $\Phi_N(Y)$ in place of~$Y$.  
  Constants  depend on $D_0$, but not on $\Psi_0$. 
\end{theorem}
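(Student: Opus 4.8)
The plan is to reduce the distorted statement to Theorem~\ref{thm:main1} by pulling back through the diffeomorphism on the Fourier side and absorbing the resulting error into the (already flexible) hypotheses on $X$ and $Y$. First I would dyadically decompose $Y\subset[-N,N]^d$ into boxes $Q$ of side length comparable to some intermediate scale $N^\theta$, with $0<\theta<1$ to be chosen. On each such box the map $\Phi_N$ satisfies $\|D^2\Phi_N\|_\infty\le C(d,D_0)N^{-1}$, so the restriction of $\Phi_N$ to $Q$ is, up to an additive error of size $C(d,D_0)N^{-1}(N^\theta)^2=C(d,D_0)N^{2\theta-1}$, equal to its affine linearization $A_Q(\xi)=\Phi_N(\xi_Q)+D\Phi_N(\xi_Q)(\xi-\xi_Q)$ at the center $\xi_Q$ of $Q$. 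The key point is that $D\Phi_N(\xi_Q)$ is a matrix with norm and inverse-norm bounded by $C(d,D_0)$ uniformly; hence $A_Q(Y\cap Q)$ is, up to a linear change of the frame vectors $\vec e_{j,i}$ (which only changes $\eps_0$ by a factor depending on $D_0$) and a translation (harmless, since a translation of $Y$ only multiplies $f$ by a modulation), again of the form \eqref{eq:Y intro}: a product of $\delta_1$-regular sets on scales $1$ to $N^\theta$ along lines whose determinant is bounded below by $c(d,D_0)\eps_0$. So $\Phi_N(Y\cap Q)$ differs from such a structured set only by the $O(N^{2\theta-1})$ nonlinear remainder.

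Next I would handle the nonlinear remainder by the standard device of enlarging $X$: if $\supp(\what f)\subset\Phi_N(Y)$ then $f=\sum_Q f_Q$ with $\supp(\what{f_Q})\subset\Phi_N(Y\cap Q)=A_Q(Y\cap Q)+E_Q$, where $E_Q$ is a set of diameter $O(N^{2\theta-1})$. Writing $A_Q(Y\cap Q)=Y_Q'$, a structured set, and picking any $\eta_Q\in E_Q$, the function $g_Q(x):=e^{-2\pi i x\cdot\eta_Q}f_Q(x)$ has Fourier support in $Y_Q'+(E_Q-\eta_Q)$, i.e.\ within $O(N^{2\theta-1})$ of $Y_Q'$. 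Since $|g_Q|=|f_Q|$ pointwise, it suffices to bound $\|g_Q\|_{L^2(X)}$. Because $g_Q$ is (after the modulation) morally band-limited to an $O(N^{2\theta-1})$-neighborhood of $Y_Q'$, convolving with a fixed Schwartz bump at scale $N^{1-2\theta}$ changes $g_Q$ negligibly in $L^2$ while smoothing it at that scale; restricting to $X$ then only ``sees'' $X$ thickened by $N^{2\theta-1}$, i.e.\ $X+B(0,N^{2\theta-1})$. Provided $2\theta-1<0$, this thickened set is still porous on scales $N^{-1}$ to $1$ with a comparable constant (the porosity/regularity hypotheses are stated to be robust under such thickening; this is exactly why the paper emphasizes after Definition~\ref{def:porous} that porosity, not regularity per se, is what is used). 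Applying Theorem~\ref{thm:main1} to $g_Q$ with the thickened $X$, the structured set $Y_Q'$ (on scales $1$ to $N^\theta$, hence a fortiori a valid Fourier support inside $[-N,N]^d$), and with the same $N$, yields $\|g_Q\|_{L^2(X)}\le C N^{-\beta'}\|g_Q\|_2$ where $\beta'$ is the exponent from Theorem~\ref{thm:main1} evaluated at the slightly worse constants $\eps_0\mapsto c(D_0)\eps_0$, $C_R\mapsto C(D_0)C_R$; note $\beta'$ is still a positive constant depending only on $d,\eps_0,\delta,\delta_1,C_R,D_0$.

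Finally I would sum over $Q$. The number of boxes $Q$ is $O(N^{d(1-\theta)})$, and by almost-orthogonality of the Fourier supports $\sum_Q\|f_Q\|_2^2\lesssim\|f\|_2^2$; applying Cauchy--Schwarz in $Q$ costs a factor $N^{d(1-\theta)/2}$, so one gets $\|f\|_{L^2(X)}\le C N^{d(1-\theta)/2}N^{-\beta'}\|f\|_2$. Choosing $\theta$ close enough to $1$ that $d(1-\theta)/2<\beta'/2$ (which also guarantees $2\theta-1<0$ and that all the scale inequalities $N^{-1}\le 1\le N^\theta\le N$ hold for large $N$) produces the bound $C N^{-\beta'/2}\|f\|_2$, i.e.\ \eqref{eq:BDd} with a new positive $\beta$ of the same structural form. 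The main obstacle I anticipate is making the ``replace $\Phi_N(Y\cap Q)$ by the structured set $Y_Q'$ at the cost of thickening $X$'' step fully rigorous: one must check that the almost-orthogonal decomposition plus modulation genuinely reduces to Theorem~\ref{thm:main1} on each box without the thickening of $X$ degrading the porosity constant in an $N$-dependent way, and that the intermediate scale $N^\theta$ is a legitimate ``$N$'' for the one-box application (it is, since Theorem~\ref{thm:main1} is monotone in the way the scales enter — a $\delta_1$-regular set on scales $1$ to $N^\theta$ sitting inside $[-N,N]^d$ only helps). Everything else is bookkeeping of constants, all of which depend on $D_0$ but not on the particular $\Psi_0$, as claimed.
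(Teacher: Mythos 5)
Your plan founders on an arithmetic incompatibility between the two constraints you place on the intermediate scale $N^{\theta}$, and the missing ingredient is almost orthogonality. On the one hand you need $2\theta-1<0$ (i.e.\ $\theta<\tfrac12$) so that the Taylor remainder of $\Phi_N$ on a box of side $N^{\theta}$ has size $N^{2\theta-1}=o(1)$ and can be absorbed into a thickening of the structured set. On the other hand, your summation over the $\sim N^{d(1-\theta)}$ boxes uses the triangle inequality plus Cauchy--Schwarz, which costs $N^{d(1-\theta)/2}$, and to beat this you need $d(1-\theta)/2<\beta'/2$, i.e.\ $\theta>1-\beta'/d$. Since the exponent $\beta'$ coming from Theorem~\ref{thm:main1} is doubly exponentially small (in particular $\beta'\ll d/2$), the two conditions $\theta<\tfrac12$ and $\theta>1-\beta'/d$ cannot hold simultaneously; your parenthetical claim that $\theta$ close to $1$ ``also guarantees $2\theta-1<0$'' is simply false. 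The Cauchy--Schwarz loss cannot be avoided by disjointness of the Fourier supports of the $f_Q$ alone, because after multiplying by $\one_X$ the pieces are no longer orthogonal. This is exactly why the paper's proof does not sum crudely: it smooths $\one_X$ at scale $N^{-1/2}$ (using \cite[Lemma~3.3]{DyZ}), proves that the operators $S_k=\psi T_k$ satisfy $\|S_k^*S_\ell\|_{2\to2}\lesssim_p N^{p/2}\dist(Q_k,Q_\ell)^{-p}$ by integration by parts in $x$ (using $\|\Phi_N(\eta)-\Phi_N(\eta')\|\gtrsim\|\eta-\eta'\|$), and then invokes Cotlar's lemma to get $\|\sum_kS_k\|\lesssim\max_k\|S_k\|^{1-\eps}$ with no loss in the number of boxes. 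That is the idea your proposal lacks.

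Two further, secondary points. First, with almost orthogonality in hand one is free to take boxes of side $\sqrt N$, for which the Taylor remainder is only $O(1)$ rather than $o(1)$; the paper therefore does not discard it but carries it as a bounded symbol $a_k(x,\eta)=e^{2\pi ix\cdot R_k(\eta)}\chi_k(\eta)$ and proves a symbol-valued version of the FUP (estimate \eqref{eq:main claim3}) by a separate frequency decomposition. Second, your device of convolving $g_Q$ with a Schwartz bump at spatial scale $N^{1-2\theta}$ to ``thicken $X$'' does not work as stated: the bump's Fourier transform is close to $1$ only on a ball of radius $N^{2\theta-1}$, not on the full Fourier support of $g_Q$ inside $[-CN,CN]^d$, so the convolution does not change $g_Q$ negligibly in $L^2$. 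The correct (and simpler) move is to thicken $Y'_Q$ rather than $X$: admissible sets are stable under adding a bounded cube (Lemma~\ref{lem:ad stable}), so a Fourier support contained in an $o(1)$-neighborhood of an admissible set is itself contained in an admissible set with comparable constants, and one should then apply the theorem at the parameter matching the scales on which that set is actually regular, rather than asserting monotonicity in $N$.
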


\medskip

In the following section we demonstrate the Cartan techniques by reproving a certain step in~\cite{BD} which was proved there by means of harmonic measure of the strip with a real line-segment removed. In Section~\ref{sec:locdim} we go beyond the one-dimensional setting via these Cartan methods. The subsequent sections implement the argument in analogy with~\cite{BD} albeit in dimensions two and higher. We haven striven to present the argument in a modular fashion. In particular, the delicate Beurling-Malliavin step appears only in Section~\ref{sec:Y} in order to prove the existence of {\em damping functions}. We do not use a higher-dimensional version of the Beurling-Malliavin theorem which appears to be unknown. Rather, we reduce ourselves in that step to the aforementioned product structure of~$Y$ (or covers of finitely many of such products) precisely so as to be able to still use the one-dimensional construction of such damping functions. Moreover, as in~\cite{JZ} it is important for us to use the weaker form of the Beurling-Malliavin theorem obtained via outer functions, see~\cite{seven}. 
Any other construction of damping functions in Section~\ref{sec:Y} would lead to different formulations of our main theorems in terms of the conditions on~$Y$ without needing to change anything in  the other sections.    Theorem~\ref{thm:main3} is proved in Section~\ref{sec:distort}. 

\section{$L^2$ localization in one dimension}

Throughout, we let  $\Rect(q)$ be the rectangle with vertices $\pm iq$, $1\pm iq$.  We begin with quantitative bounds on the Schwarz-Christoffel map from the disk onto a rectangle. The goal is to control this conformal mapping as the eccentricity of $\Rect(q)$ tends to~$0$. 

\begin{lemma}
\label{lem:Phir}
Let $0<q \le 1$ and 
define  $\Phi_{q}$ to be the unique conformal map, continuous up to the boundary,  which takes the unit disk $\D$ onto the rectangle $\Rect(q)$  and so that $\Phi_{q}( -1)=0$, $\Phi_{q}(\pm i) = \pm i q$. Then $\Phi_{q}(1)=1$,  $\Phi_{q}(e^{\pm i\theta(q)} )= 1\pm iq$ where 
\[
\theta(q) = 8 \exp\big(-\frac{\pi}{2q} \big) (1+O(q)),\quad q\to0
\]
Moreover, 
\[
\Phi_{q}([a_{1}(q), a_{2}(q)]) = \big[ \frac14, \frac34\big], \quad a_{j}(q) =1 -\delta_{j}(q)
\]
with 
\[
\delta_{1}(q) = 4 \exp\big(-\frac{\pi}{8q}\big) (1+O(q)), \quad \delta_{2}(q) = 4 \exp\big(-\frac{3\pi}{8q}\big) (1+O(q))
\]
as $q\to0$.  Let $E\subset [a_{1}(q), a_{2}(q)]$ be a measurable set. Then for sufficiently small $q$ one has $| \Phi_{q}(E)|\le 2\delta_{2}(q)^{-2}|E|$, where $|\cdot|$ denotes Lebesgue measure. 
\end{lemma}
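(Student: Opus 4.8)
The plan is to realize $\Phi_q$ as a classical Schwarz--Christoffel map and to read off every asymptotic from the behaviour of the Jacobi elliptic functions as the modulus $k\to1^-$. Write $\Phi_q=\Lambda\circ F\circ M$, where $M:\D\to\mathbb{H}$ is a M\"obius map onto the upper half plane, $F(w)=\int_0^w\frac{dt}{\sqrt{(1-t^2)(1-k^2t^2)}}$ is the incomplete elliptic integral of the first kind with modulus $k\in(0,1)$ (so $F^{-1}=\mathrm{sn}(\cdot,k)$), which takes $\mathbb{H}$ onto the model rectangle with vertices $\pm K,\pm K+iK'$, $K=K(k)$, $K'=K(k')$, $k'=\sqrt{1-k^2}$, and $\Lambda$ is the affine similarity carrying the model rectangle onto $\Rect(q)$. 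The model rectangle has side ratio $K'/(2K)$ while $\Rect(q)$ has side ratio $2q$, so the eccentricity forces $K'(k)/K(k)=4q$; thus $q\to0$ corresponds to $k\to1^-$, i.e. $K\to\infty$, $K'\to\tfrac\pi2$. The reflection symmetry $\Phi_q(\bar z)=\overline{\Phi_q(z)}$ (valid because the data $\Phi_q(-1)=0$, $\Phi_q(\pm i)=\pm iq$ is conjugation symmetric) immediately gives $\Phi_q(1)=1$ and shows that the prevertices of $1\pm iq$ form a conjugate pair $e^{\pm i\theta(q)}$.

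First I would pin down $k$: the classical expansions $K(k)=\log(4/k')+O((k')^2\log(1/k'))$ and $K'(k)=\tfrac\pi2+O((k')^2)$ turn $K'(k)/K(k)=4q$ into $\log(4/k')=\tfrac{\pi}{8q}(1+o(1))$, hence $k'=4\exp(-\tfrac{\pi}{8q})(1+O(q))$ and $e^{-K}=k'/4$. Next I would locate the $M$-images of the relevant points in $\mathbb{H}$, using the exact half- and quarter-period values of $\mathrm{sn},\mathrm{cn},\mathrm{dn}$ (equivalently $\mathrm{sn}(u,k)=\tanh u+O((k')^2)$ on the relevant range): the edge midpoints $\pm K+\tfrac i2 K'$ have $F$-preimages $\mathrm{sn}(\pm K+\tfrac i2 K',k)=\pm k^{-1/2}$, so $M(\pm1)=\pm k^{-1/2}$; the corners have $F^{-1}(\pm K)=\pm1$, $F^{-1}(\pm K+iK')=\pm1/k$, so (matching boundary orientations) $M(-i)=-1$, $M(e^{-i\theta})=1$, $M(i)=-1/k$, $M(e^{i\theta})=1/k$; and the two points $a_j(q)$ of the real diameter that $\Phi_q$ sends to $\tfrac14,\tfrac34$ on the centre segment of $\Rect(q)$ have $F$-preimages $\mathrm{sn}(-\tfrac K2+\tfrac i2 K',k)=-1+\tfrac i2 k'+O((k')^2)$ and $\mathrm{sn}(\tfrac K2+\tfrac i2 K',k)=1+\tfrac i2 k'+O((k')^2)$. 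Finally I would transport these back to $\D$ via invariance of the cross ratio under $M$: comparing $[e^{i\theta},e^{-i\theta};i,-i]$ on the two sides gives $\tan^2(\tfrac\theta2-\tfrac\pi4)=\frac{4k}{(1+k)^2}$, whence, after expanding and using $1-k\sim\tfrac12(k')^2$, $\theta(q)=\tfrac1{32}(k')^4(1+O(q))=8\exp(-\tfrac{\pi}{2q})(1+O(q))$; comparing $[a_j,-1;1,i]$ on the two sides and writing $a_j=1-\delta_j(q)$ gives $\delta_1(q)=k'(1+O(q))=4\exp(-\tfrac{\pi}{8q})(1+O(q))$ and $\delta_2(q)=\tfrac1{16}(k')^3(1+O(q))=4\exp(-\tfrac{3\pi}{8q})(1+O(q))$. (Only these two cross ratios involve $\tfrac14,\tfrac34$; a different interior pair would merely alter the constants.)

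For the final measure assertion I would argue directly from the Schwarz--Christoffel derivative
\[
\Phi_q'(z)=\frac{A}{\sqrt{(z^2+1)\,(z^2-2z\cos\theta(q)+1)}},\qquad A=A(q)\asymp q,
\]
where $A\asymp q$ because the right edge, of length $2q$, is the image of the arc $\{e^{i\phi}:|\phi|\le\theta(q)\}$ and $\int_{-\theta(q)}^{\theta(q)}|\Phi_q'(e^{i\phi})|\,d\phi=\tfrac{\pi A}{\sqrt2}(1+o(1))$. On $[a_1(q),a_2(q)]\subset(0,1)$ one has $|z^2+1|\ge1$, and since $a_2(q)=1-\delta_2(q)$ with $\theta(q)\ll\delta_2(q)$, the distance from $z$ to the singular points $e^{\pm i\theta(q)}$ is at least $\cos\theta(q)-a_2(q)\ge\tfrac12\delta_2(q)$ for small $q$; hence $|z^2-2z\cos\theta+1|=|z-e^{i\theta}|^2\ge\tfrac14\delta_2(q)^2$ and $|\Phi_q'(z)|\les q/\delta_2(q)$. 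Since $q/\delta_2(q)$ is far below $\delta_2(q)^{-2}$ once $q$ is small, $|\Phi_q(E)|=\int_E|\Phi_q'|\le\sup_{[a_1,a_2]}|\Phi_q'|\cdot|E|\le2\delta_2(q)^{-2}|E|$ follows. The conceptual skeleton---Schwarz--Christoffel $=$ elliptic integral, thin rectangle $\Leftrightarrow k\to1$, every exponentially small quantity a power of $k'$---is routine; the one place to be careful, and the main technical content, is the bookkeeping in the second paragraph: invoking the asymptotics of $K,K',\mathrm{sn}$ near $k=1$ with enough precision to produce the exact constants $8$ and $4$ (and the errors), and matching each of $\pm iq$, $1\pm iq$, $\Phi_q^{-1}(\tfrac14)$, $\Phi_q^{-1}(\tfrac34)$ with the correct half- or quarter-period. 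The measure bound itself is soft and has enormous room to spare (even $\delta_2(q)^{-1}$ in place of $2\delta_2(q)^{-2}$ would do).
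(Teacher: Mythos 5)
Your proposal is correct and would yield all the stated asymptotics, but it takes a genuinely different route from the paper's. The paper also factors $\Phi_q$ through an elliptic integral, but in the opposite modulus regime: it sets $F_q=-\frac{i}{H(k)}\arcsn(\cdot,k)$ with $k\to 0$ (a tall model rectangle, rotated by $-90^\circ$, so that $q=L(k)/H(k)$ and $k=4e^{-\pi/(2q)}(1+O(q))$), finds the preimages $iA(q),iB(q)$ of $\tfrac14,\tfrac34$ on the positive imaginary axis by directly expanding the incomplete integrals $\int_0^{A(q)}ds/\sqrt{(1+s^2)(1+k^2s^2)}=\tfrac14 H(k)$, and then transports to the disk through the explicit M\"obius map $\varphi(w)=i\frac{w+1}{1-w}$; you instead keep the wide rectangle ($k\to1^-$, $K'/K=4q$, $k'=4e^{-\pi/(8q)}(1+O(q))$) and replace the integral asymptotics by half- and quarter-period values of $\sn$ together with cross-ratio invariance. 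I verified your two cross ratios and the resulting expansions $\theta=\tfrac{1}{32}(k')^4$, $\delta_1=k'$, $\delta_2=\tfrac1{16}(k')^3$, as well as your prevertex assignment ($M(-i)=-1$, $M(e^{-i\theta})=1$, $M(e^{i\theta})=1/k$, $M(i)=-1/k$, $M(\pm1)=\pm k^{-1/2}$) against an explicit affine normalization: they reproduce the constants $8$ and $4$ correctly, so the bookkeeping you flag as the main technical content does close; the errors $O(k')$ incurred along the way are harmless since $k'\ll q$. For the measure bound the paper is shorter: it writes $|(F_q\circ\varphi)'(w)|\le |F_q'(z)|\,|\varphi'(w)|\le 2(1-|w|)^{-2}$ and uses $1-|w|\ge\delta_2$ on $[a_1,a_2]$, which delivers the stated constant $2\delta_2^{-2}$ directly; your Schwarz--Christoffel derivative estimate $|\Phi_q'|\lesssim q/\delta_2$ is stronger and also correct, at the cost of justifying $A\asymp q$ (which, if you want to avoid the arc-length computation, also follows in one line from the Koebe and Schwarz lemmas applied at $z=0$, since $\Phi_q'(0)=A$ and $\Phi_q(0)$ lies on the midline of a rectangle of half-height $q$).
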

\begin{figure*}[t]
\begin{tikzpicture}[yscale=0.8,xscale=0.8]
\draw [fill=gray!35!] (2,2) circle (2);
\draw [->,line width = .03cm] (5,2) -- (7,2);
\draw [-] (10,1.5) -- (10,2.5);
\draw [-] (10,2.5) -- (12,2.5);
\draw [-] (10,1.5) -- (12,1.5);
\draw [-] (12,1.5) -- (12,2.5);
\draw [-] (10.5,2) -- (11.5,2);
\draw [fill=gray!35!] (10,1.5) rectangle (12,2.5);
\draw [-] (3,2) -- (3.8,2);
\node at (6,2.4) {$\Phi_q$};
\node at (8.95,2.7) {\small $\Phi_q(i)=iq$};
\node at (8.8,2) {\small $\Phi_q(-1)=0$};
\node at (8.6,1.3) {\small $\Phi_q(-i)=-iq$};
\node at (14.1,1.3) {\small $\Phi_q(e^{-i\theta(q)})=1-iq$};
\node at (13.2,2) {\small $\Phi_q(1)=1$};
\node at (14,2.7) {\small $\Phi_q(e^{i\theta(q)})=1+iq$};
\node at (4.60211,2.61803) {$e^{i \theta(q)}$};
\node at (4.60211,1.382) {$e^{-i \theta(q)}$};
\node at (3,2.2) {$a_1$};
\node at (3.7,2.2) {$a_2$};
\node at (10.5,2.2) {\small $1/4$};
\node at (11.5,2.2) {\small $3/4$};
\node at (2,-0.5) {$\mathbb{D}$};
\node at (11, 0) {$\mathcal{R}(q)$};
\filldraw[color=black, fill=black](10.5,2) circle (.05);
\filldraw[color=black, fill=black](11.5,2) circle (.05);
\filldraw[color=black, fill=black](3,2) circle (.05);
\filldraw[color=black, fill=black](3.8,2) circle (.05);
\filldraw[color=black, fill=black](3.90211,2.61803) circle (.05);
\filldraw[color=black, fill=black](3.90211,1.382) circle (.05);
\filldraw[color=black, fill=black](12,1.5) circle (.05);
\filldraw[color=black, fill=black](12,2.5) circle (.05);
\filldraw[color=black, fill=black](10,1.5) circle (.05);
\filldraw[color=black, fill=black](10,2.5) circle (.05);
\filldraw[color=black, fill=black](12,2) circle (.05);
\filldraw[color=black, fill=black](10,2) circle (.05);
\end{tikzpicture}
\caption{Conformal map $\Phi_q$}
\end{figure*}

\begin{proof}
Let $0<k<1$ and consider the elliptic integral of the first kind
\[
\arcsn(z,k) = \int_{0}^{z} \frac{dt}{\sqrt{(1-t^{2})(1-k^{2}t^{2})}}, \qquad \Im z>0
\]
which maps the upper half-plane onto the rectangle with vertices $\pm L(k), \pm L(k) + iH(k)$. Here $2L(k)$ and $iH(k)$ are the periods of the elliptic function $\sn(z,k)$ and satisfy, as $k\to0$, 
\EQ{\nn
L(k) &=  \int_{0}^{1}\frac{dt}{\sqrt{(1-t^{2})(1-k^{2}t^{2})}} = \frac{\pi}{2}+O(k^{2}), \\
H(k) &= \int_{1}^{k^{-1}}   \frac{dt}{\sqrt{(t^{2}-1)(1-k^{2}t^{2})}}   = \int_{0}^{\infty} \frac{ds}{\sqrt{(1+s^{2})(1+k^{2}s^{2})}}  \\
&= \log 4 -\log k + O(k)
}
The latter expansion is a standard fact, see for example \cite[Section 17.3.26]{AS}. 
Let $q:=\frac{L(k)}{H(k)}$ and set
\EQ{\label{eq:Fq}
F_{q}(z) = - \frac{i}{H(k)} \arcsn(z,k)
}
which maps the upper half-plane onto the rectangle with vertices $\pm iq, 1\pm iq$.    With $k=e^{-\frac{\pi}{2}\ell}$, 
\EQ{ \nn
q &= \frac{\frac{\pi}{2}+O(k^{2})}{ \log 4 + \frac{\pi}{2}\ell  + O(k)} \\
&= \ell^{-1}\big(1  - \frac{\log{16}}{\pi \ell} + O(k)\big)
}
and thus
\EQ{\nn 
\ell &= q^{-1}\big(1  - \frac{2\log{4}}{\pi }q + O(q^{2})\big) \\
k &= 4 \exp\big(-\frac{\pi}{2q} \big) (1+O(q))
}
Define $A(q), B(q)$ by $F_{q}(iA(q))=\frac14$, $F_{q}(iB(q))=\frac34$.  Thus, 
\EQ{\nn 
\int_{0}^{A(q)} \frac{ds}{\sqrt{(1+s^{2})(1+k^{2}s^{2})}} &= \frac14 H(k)\\
\int_{0}^{B(q)} \frac{ds}{\sqrt{(1+s^{2})(1+k^{2}s^{2})}} &= \frac34 H(k)
}
\begin{figure*}[t]
\begin{tikzpicture}[yscale=0.8,xscale=0.8]
\draw [-] (-1,-0.3) -- (4,-0.3);
\draw [->,line width = .03cm] (4.4,0.8) -- (7.6,0.8);
\node at (6,1.2) {$g_k(\cdot):=\mathrm{arcsn}(\cdot,k)$};
\node at (1.5,-1.5) {Upper half plane};
\fill[gray!35!] (-1,-0.3) rectangle (4,3.3);
\fill[gray!35!] (9.5,-1) rectangle (11.5,3);
\draw [-] (9.5,-1) -- (9.5,3);
\draw [-] (9.5,-1) -- (11.5,-1);
\draw [-] (11.5,-1) -- (11.5,3);
\draw [-] (9.5,3) -- (11.5,3);
\draw [-] (1.5,1.5) -- (1.5,2);
\draw [-] (10.5, 0) -- (10.5,2);
\node at (8,-0.8) {\small $g_k(-1)=-L(k)$};
\node at (10.5, -1.5) {\small $g_k(0)=0$};
\node at (12.8, -0.8) {\small $g_k(1)=L(k)$};
\node at (7.85, 3) {\small $g_k(-k^{-1})=$};
\node at (8.1, 2.5) {\small $-L(k)+i H(k)$};
\node at (12.8, 3) {\small $g_k(k^{-1})=$};
\node at (13, 2.5) {\small $L(k)+i H(k)$};
\node at (10.5, 3.3) {\small $g_k(\infty)$};
\node at (-0.5,-0.7) {$-k^{-1}$};
\node at (1, -0.7) {$-1$};
\node at (1.5, -0.7) {$0$};
\node at (2, -0.7) {$1$};
\node at (3.5, -0.7) {$k^{-1}$};
\node at (2.2, 1.3) {$iA(q)$};
\node at (2.2, 2.2) {$iB(q)$};
\node at (11, -0.3) {$\frac{i H(k)}{4}$};
\node at (10.8, 2.3) {$\frac{3i H(k)}{4}$};
\filldraw[color=black, fill=black](10.5,2) circle (.05);
\filldraw[color=black, fill=black](10.5,0) circle (.05);
\filldraw[color=black, fill=black](1.5,2) circle (.05);
\filldraw[color=black, fill=black](1.5,1.5) circle (.05);
\filldraw[color=black, fill=black](9.5,-1) circle (.05);
\filldraw[color=black, fill=black](9.5,3) circle (.05);
\filldraw[color=black, fill=black](11.5,-1) circle (.05);
\filldraw[color=black, fill=black](11.5,3) circle (.05);
\filldraw[color=black, fill=black](10.5,-1) circle (.05);
\filldraw[color=black, fill=black](10.5,3) circle (.05);
\filldraw[color=black, fill=black](-0.5,-0.3) circle (.05);
\filldraw[color=black, fill=black](1,-0.3) circle (.05);
\filldraw[color=black, fill=black](1.5,-0.3) circle (.05);
\filldraw[color=black, fill=black](2,-0.3) circle (.05);
\filldraw[color=black, fill=black](3.5,-0.3) circle (.05);
\end{tikzpicture}
\caption{Elliptic integral $\mathrm{arcsn}(z,k)$}
\end{figure*}
We make the ansatz $A(q)= c k^{-\frac14}(1+\eps(q))$.    Then 
\EQ{\nn 
\int_{0}^{A(q)} \frac{ds}{\sqrt{(1+s^{2})(1+k^{2}s^{2})}}  & = (1+O(k^{\frac32}))  \int_{0}^{A(q)} \frac{ds}{\sqrt{ 1+s^{2}  } }   \\
&= \arcsinh( c k^{-\frac14}(1+\eps(q))    ( 1+O(k^{\frac32}))  \\
&= \log(2c k^{-\frac14}(1+\eps(q)) ( 1+O(k^{\frac32})) \\
&= \frac14 (\log 4 -\log k + O(k)) 
}
Hence, 
\EQ{\nn
\log(2c) -\frac14\log  k + \log(1+\eps(q))  & = \frac14 (\log 4 -\log k + O(k))   \\
c & = \frac12\sqrt{2}, \qquad  \eps(q) = O(k) \\
A(q) &= \frac12\sqrt{2} \, k^{-\frac14}(1+O(k)) 
}
Similarly,  with $B(q) = \tilde c k^{-\frac34}(1+\tilde \eps(q))$
\EQ{\nn
  \log(2\tilde c) -  \frac34 \log k + \log(1+\tilde\eps(q)) 
&= \frac34 (\log 4 -\log k + O(k))(1+ O(k^{\frac12}))   \\
\tilde c = \sqrt{2}, \quad \tilde \eps(q) = O(k^{\frac12}\log k)
}
and so 
\[
B(q) = \sqrt{2} \, k^{-\frac34}(1+O(k^{\frac12}\log k)) 
\]
Expressing $k$ in terms of $q$ we obtain
\[
A(q) = \frac12 \exp\big(\frac{\pi}{8q}\big) (1+O(q)), \quad B(q) = \frac12 \exp\big(\frac{3\pi}{8q}\big) (1+O(q))
\]
Next, we conformally map the upper half plane $\Im z>0$ onto the unit disk $|w|<1$ via 
 $z=\varphi(w) = i \frac{w+1}{1-w}$, $w=\frac{z-i}{z+i}$.   One has $\varphi(-1)=0$, $\varphi(\pm i) = \mp 1$,  $\varphi(e^{i\theta}) = -k^{-1}$ with $\theta= 2k +O(k^{3})$. 
 Furthermore,  $\varphi([a_{1}(q), a_{2}(q)]) = i[A(q), B(q)]$ where 
\EQ{\nn
a_{1}(q) &= \frac{A(q)-1}{A(q)+1} = 1 - 2 A(q)^{-1} + O(A(q)^{-2}),\\ a_{2}(q) &= \frac{B(q)-1}{B(q)+1} = 1 - 2 B(q)^{-1} + O(B(q)^{-2})
}
Setting $a_{j}(q) = 1 -\delta_{j}(q)$ we have 
\[
\delta_{1}(q) = 4 \exp\big(-\frac{\pi}{8q}\big) (1+O(q)), \quad \delta_{2}(q) = 4 \exp\big(-\frac{3\pi}{8q}\big) (1+O(q))
\]
as claimed.  The final claim of the lemma follows from $$|(F_{q}\circ\varphi)'(w)|\le |F_{q}'(z)||\varphi'(w)|\le 2(1-|w|)^{-2}$$ where $\varphi(w)=z$, $w\in(0,1)$. We used here that 
for $z=is$, $s>0$, 
\EQ{\nn 
|F_{q}'(z)| &= H(k)^{-1} (1+|z|^{2})^{-\frac12}(1+k^{2}|z|^{2})^{-\frac12} \\
&\le H(k)^{-1} (1+|z|^{2})^{-\frac12} \le 1
}
for small $q$. 
 \end{proof}

By a subharmonic function $v$ on a domain $\Omega\subset\C$ we mean a function $v:\Omega\to [-\infty,\infty)$, which is upper semi-continuous and satisfies the sub mean-value property.  We recall the basic Riesz representation of subharmonic function on the disk, albeit with precise quantitative control on the Riesz mass and the harmonic part. In view of Lemma~\ref{lem:Phir} we need  to consider the case where the lower bound on the subharmonic function is attained arbitrarily  close to the boundary of the unit disk. 

\begin{lemma}\label{lem:vRiesz}
Let $v$ be subharmonic on a neighborhood of $\D$, with $v\le M$ on $\D$, and assume $\sup_{\rho \D} v\ge m$ for some $0<\rho<1$.   
Let $\rho<r_{1}<r<1$. 
Then there exist a nonnegative measure $\mu$ on $\D$, called the Riesz measure,  with the property that for all $w\in r\D$ 
\EQ{\label{eq:vRiesz}
v(w)=\int_{r\D}\log{|z-w|}\, \mu(d z)+h(w) 
}
with $h$ harmonic on $r\D$. We have the quantitative bounds on the Riesz mass
\EQ{\label{eq:mu}
\mu(r \D)\leq \frac{M-m}{\log\Big(\frac{1+\rho r}{\rho + r}\Big)}
}
and on the deviations of the harmonic function 
\EQ{\label{eq:hminc}
\min_{c\in \R}\max_{|w|\leq r_{1}}\ |h(w)-c|  \le  \frac12 (M-m)  \frac{r+r_{1}}{r-r_{1}} \frac{ \log\Big(\frac{1+\rho r}{1-r^{2}}\Big)  }{\log\Big(\frac{1+\rho r}{\rho + r}\Big)  }
=:\varepsilon
}
The constant $c$ which minimizes the left-hand side satisfies
\EQ{\label{eq:cm}
c \ge m  - \varepsilon - \log(r+\rho) \mu(r \D) 
}
\end{lemma}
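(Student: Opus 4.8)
The plan is to take $\mu$ and $h$ from the classical Riesz decomposition and then read off the three estimates one at a time; the substance is entirely in making the constants explicit and, crucially, uniform as $\rho\to1$.

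\emph{Step 1: the decomposition.} Let $\mu:=\frac1{2\pi}\Delta v\ge0$ be the Riesz measure of $v$ on the neighborhood of $\overline\D$ where it is subharmonic, restricted to $\D$; this is the desired measure. For $w\in r\D$ put $p(w):=\int_{\overline{r\D}}\log|z-w|\,\mu(dz)$, integrating over the closed disk so that any mass on the circle $|z|=r$ — which is harmonic in $w\in r\D$ — is included in $p$. Then $\Delta p=2\pi\,\mu|_{r\D}$ on $r\D$, so $h:=v-p$ is harmonic on $r\D$ by Weyl's lemma, which is \eqref{eq:vRiesz}.

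\emph{Step 2: the Riesz mass.} Run the Poisson--Jensen formula on the unit disk: $v=u-G_\mu$ on $\D$, where $G_\mu(w)=\int_\D g(z,w)\,\mu(dz)$, $g(z,w)=\log\bigl|\tfrac{1-\bar z w}{z-w}\bigr|\ge0$ is the Green's function of $\D$, and $u$ is the Poisson extension of the boundary values $v|_{\partial\D}$, so $u$ is harmonic with $u\le M$. By upper semicontinuity choose $w_0\in\overline{\rho\D}$ with $v(w_0)=\sup_{\overline{\rho\D}}v\ge m$; then $G_\mu(w_0)=u(w_0)-v(w_0)\le M-m$. A direct computation with the automorphism $z\mapsto\frac{z-w_0}{1-\bar w_0 z}$ (its modulus on $\overline{r\D}$ is maximal at the point antipodal to $w_0$, where it equals $\tfrac{r+\rho}{1+\rho r}$) shows $g(z,w_0)\ge\log\tfrac{1+\rho r}{\rho+r}$ for all $z\in\overline{r\D}$, hence
\[
\mu(r\D)\,\log\tfrac{1+\rho r}{\rho+r}\ \le\ \int_{\overline{r\D}}g(z,w_0)\,\mu(dz)\ \le\ G_\mu(w_0)\ \le\ M-m,
\]
which is \eqref{eq:mu}. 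It is essential to use the Green's function of $\D$: that of $r\D$ vanishes on $\partial(r\D)$, which $r\D$ touches, and would give only a vacuous bound. Then, since $|w_0|\le\rho<r_1$, the point $w_0$ lies in $\overline{r_1\D}$ and from \eqref{eq:vRiesz} we get $h(w_0)=v(w_0)-p(w_0)\ge m-\log(r+\rho)\mu(r\D)$ because $|z-w_0|\le r+\rho$ on $\overline{r\D}$; combined with $|h(w_0)-c|\le\varepsilon$ for the minimizing $c$ this yields \eqref{eq:cm} once \eqref{eq:hminc} is in hand.

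\emph{Step 3: the oscillation.} Subtracting the two decompositions and using $\log|z-w|+g(z,w)=\log|1-\bar z w|$ gives, for $w\in r\D$,
\[
h(w)=u(w)-R(w)-\psi(w),\qquad R(w)=\int_{\overline{r\D}}\log|1-\bar z w|\,\mu(dz),\quad \psi(w)=\int_{\D\setminus\overline{r\D}}g(z,w)\,\mu(dz),
\]
with $R$ harmonic on $\D$ and $\psi\ge0$ harmonic on $r\D$. I bound $\operatorname{osc}_{\overline{r_1\D}}h$ by the sum of the three oscillations: for $R$, the elementary inequalities $1-rr_1\le|1-\bar z w|\le1+rr_1$ on $\overline{r\D}\times\overline{r_1\D}$ together with \eqref{eq:mu} give $\operatorname{osc}_{\overline{r_1\D}}R\lesssim(M-m)\log\frac{1+\rho r}{1-r^2}\big/\log\frac{1+\rho r}{\rho+r}$; for $u$, the positive harmonic function $M-u$ satisfies $(M-u)(w_0)\le M-m$, so Harnack on $\D$ controls $\operatorname{osc}_{\overline{r_1\D}}u$ by $(M-m)$ times an explicit factor; and $\psi\ge0$ is positive harmonic on $r\D$ with $\psi(w_0)\le G_\mu(w_0)\le M-m$, so the same mechanism — refined by the fact that $\psi$ is a Green potential on $\D$ of mass carried by the annulus $r<|z|<1$ and hence also vanishes at $\partial\D$ — controls $\operatorname{osc}_{\overline{r_1\D}}\psi$. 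Summing these and collapsing the elementary factors into $\frac{r+r_1}{r-r_1}\log\frac{1+\rho r}{1-r^2}\big/\log\frac{1+\rho r}{\rho+r}$ gives \eqref{eq:hminc}, the minimizing constant being $c=\frac12\bigl(\sup_{\overline{r_1\D}}h+\inf_{\overline{r_1\D}}h\bigr)$. The main obstacle is exactly here: every constant must survive $\rho,r\to1$ (this is what forces the unit-disk Green's function and the repeated use of $G_\mu(w_0)\le M-m$), and the term $\psi$ — the potential of the part of $\mu$ sitting outside $r\D$, which contributes to the oscillation of $h$ even when $\mu(r\D)=0$ — must be controlled on $\overline{r_1\D}$ by a boundary-Harnack-type estimate that does not degenerate as $w_0$ is allowed to approach $\partial(r\D)$; reconciling all of this with the single closed form \eqref{eq:hminc} is where the bookkeeping concentrates.
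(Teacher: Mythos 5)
Your Steps 1 and 2 are correct and are essentially the paper's own argument: the same Riesz measure $\mu=\frac1{2\pi}\Delta v$, the same Poisson--Jensen decomposition $v=u-G_\mu$, the same lower bound $g(z,w_0)\ge\log\frac{1+\rho r}{\rho+r}$ on $\overline{r\D}$ giving \eqref{eq:mu}, and the same one-point lower bound $h(w_0)\ge m-\log(r+\rho)\mu(r\D)$ giving \eqref{eq:cm} (the paper merely normalizes $w_0=\rho$). The gap is in Step 3. Bounding $\mathrm{osc}_{r_1\D}\,h$ by $\mathrm{osc}\,u+\mathrm{osc}\,R+\mathrm{osc}\,\psi$ and applying Harnack to each nonnegative piece separately cannot produce the constant in \eqref{eq:hminc}: the functions $M-u$ and $\psi$ are each known to be $\le M-m$ only at the single point $w_0$, so interior Harnack gives each of them a bound of order $\frac{(r+r_1)(r+\rho)}{(r-r_1)(r-\rho)}(M-m)$ with no accompanying factor $\log\frac{1+\rho r}{1-r^2}\big/\log\frac{1+\rho r}{\rho+r}$, and these two factors are not comparable. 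For instance with $\rho=0.897$, $r_1=0.899$, $r=0.9$ one has $\frac{r+\rho}{r-\rho}=599$ while $\log\frac{1+\rho r}{1-r^2}\big/\log\frac{1+\rho r}{\rho+r}\approx 394$, so the Harnack bound for the $\psi$-term alone already exceeds $2\varepsilon$. The ``boundary-Harnack-type estimate'' you invoke to rescue that term is precisely the missing ingredient: $\psi$ is harmonic only on $r\D$, and the mass of $\mu$ in the annulus $r<|z|<1$ is not controlled by \eqref{eq:mu} and may accumulate on $\partial(r\D)$, so no improvement over interior Harnack is available without a genuinely new argument, which the proposal does not supply. The closing claim that the three contributions ``collapse'' into the exact expression defining $\varepsilon$ is therefore unsubstantiated and, as stated, false.

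The paper avoids splitting the oscillation. From the same decomposition $h=u-R-\psi$ it first extracts a single pointwise upper bound $h\le h^*:=M-\log(1-r^2)\mu(r\D)$ valid on all of $r\D$ (drop $-\psi\le0$, use $|1-z\bar w|\ge1-r^2$, and $u\le M$), and then applies Harnack exactly once to the one nonnegative harmonic function $h^*-h$, anchored at the point where the lower bound on $h$ is known, so that $(h^*-h)(w_0)\le M-m+\log\frac{r+\rho}{1-r^2}\,\mu(r\D)$. Inserting \eqref{eq:mu} into this single quantity is what produces the ratio of logarithms, and the single application of Harnack produces the single factor $\frac{r+r_1}{r-r_1}$. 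The cancellations among $u$, $R$ and $\psi$ have to be exploited \emph{before} Harnack is applied; applying Harnack to each piece afterwards loses them irrecoverably.
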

\begin{proof}
We will assume that $v$ is smooth, the general case following by approximation. 
The Green function $G:\D\times \D \to \R$ given by
$$
G(z,w):=\frac{1}{2\pi }\log\Big |\frac{z-w}{1-z\ol{w}}\Big |
$$
satisfies $\Delta_z G(z,w)=\delta_w$ and $G(z,w)=0$ when $|z|=1$.

Let $w\in \D$. By Green's second identity for the domain $\D$, we have
$$
v(w)-\int_{\D}G(z,w) \Delta v(z)\, \mathrm{Vol}(d z)
= \int_{\del \D} v(z) \frac{\del G}{\del n_z}(z,w) \, \sigma(d z), 
$$
where $\mathrm{Vol}$ is the standard volume measure and $\sigma$ is the (unnormalized) arc length measure on the circle $\del \D$. 
Since $v$ is smooth and subharmonic, $\Delta v$ is a non-negative, continuous function, call it $2\pi \mu$. Therefore
\begin{align}\label{eq:green}
v(w)=\int_{\D}2\pi G(z,w)\, \mu(d z)+h_0(w),
\end{align}
where
\begin{align}
h_0(w):=\int_{\del \D} v(z) \frac{\del G}{\del n_z}(z,w) \, \sigma(d z).
\end{align}
Let $0<r<1$. 
On the disk $r \D$ we  have the Riesz representation 
\EQ{\label{eq:riesz}
v(w)=\int_{r\D}\log{|z-w|}\, \mu(d z)+h(w),
}
where
\EQ{
\label{eq:hdefn}
h(w):=\int_{\D\setminus r\D} \log{\Big |\frac{z-w}{1-z\overline{w}}\Big |}\, \mu(d z)-\int_{r\D} \log |1-z\ol{w} |\, \mu(d z)+h_0(w)
}
is harmonic in $r\D$.
Note that  $\frac{\del G}{\del n_z}(z,w)$ is  the Poisson kernel whence 
\EQ{\label{eq:h0=poisson}
h_0(w)=\int_{0}^1 v(e(\theta)) P_{|w|}(\varphi-\theta)\, d\theta,\qquad w=|w|e(\varphi).
}
We now set out to bound the Riesz measure $\mu$.
Without loss of generality, assume $m=v(\rho)$. Then 
setting $w=\rho$ in \eqref{eq:green} yields 
\EQ{\label{eq:h0v}
 \int_{\D} \log \frac{|1-\rho z| }{ |z-\rho|}   \, \mu(d z)=h_0(\rho)-v(\rho)\leq M-m,
}
in which we used 
\EQ{\label{eq:h0M}
h_0(\rho)\leq M. 
}
This follows from the maximum principle and the fact that $h_0$ is the harmonic function on $\D$ with boundary values $v$ by \eqref{eq:h0=poisson}.
By an elementary calculation, 
\[
\min_{|z|\le \tilde\rho} \frac{|1-\rho z| }{ |z-\rho|} = \frac{1+\rho \tilde\rho}{\rho + \tilde\rho} > 1
\]
for all $0<\rho, \tilde\rho<1$.  Inserting this bound into \eqref{eq:h0v} implies that
\EQ{\label{eq:Rieszmass}
\mu(\tilde\rho\, \D)\leq \frac{M-m}{\log\Big(\frac{1+\rho \tilde\rho}{\rho + \tilde\rho}\Big)}.
}
In particular, this bound holds with $\tilde \rho=r$ as above. Let $\rho<r_{1}<r<1$. For all $w\in r\D$ we have 
\EQ{\label{eq:hoben*}
h(w) & = \int_{\D\setminus r\D} {  {2\pi}} G(w,z)\, \mu(dz) - \int_{r\D} \log|1-z\bar{w}|\, \mu(dz) + h_{0}(w)\\
&\leq - \log(1-r^{2})\mu(r\D) + M =: h^{*}
}
By Harnack's inequality on $r_{1}\D$ we conclude from this that  for any $w\in r_{1}\D$,
\[
(h^{*} - h(w)) \le \frac{r+r_{1}}{r-r_{1}} (h^{*}-h(\rho))
\]
whence
\[
h(w) \ge \frac{r+r_{1}}{r-r_{1}} h(\rho)  - \frac{2r_{1}}{r-r_{1}} h^{*}
\]
By \eqref{eq:riesz}, 
\EQ{\label{eq:hrho}
h(\rho) &= v(\rho) - \int_{r\D}\log|z-\rho|\, \mu(dz) \ge m - \log(r+\rho)\mu(r\D)
}
and thus
\[
h(w) \ge \frac{r+r_{1}}{r-r_{1}} \big( m - \log(r+\rho)\mu(r\D) \big)  - \frac{2r_{1}}{r-r_{1}} h^{*} =: h_{*}
\]
In summary, 
\EQ{\label{eq:hdev}
\min_{c\in \R}\max_{|w|\leq r_{1}}\ |h(w)-c| &\leq \frac12 (h^{*}-h_{*}) \\
&= \frac12  \frac{r+r_{1}}{r-r_{1}} \Big(  h^{*} - m +  \log(r+\rho)\mu(r\D)   \Big)  \\
&= \frac12  \frac{r+r_{1}}{r-r_{1}} \Big(M-m + \log\big( \frac{r+\rho}{1-r^{2}}  \big) \mu(r\D)   \Big)
}
Finally, bounded the $\mu$-mass by \eqref{eq:Rieszmass} finally implies that
\[
\min_{c\in \R}\max_{|w|\leq r_{1}}\ |h(w)-c|  \le  \frac12 (M-m)  \frac{r+r_{1}}{r-r_{1}} \frac{ \log\Big(\frac{1+\rho r}{1-r^{2}}\Big)  }{\log\Big(\frac{1+\rho r}{\rho + r}\Big)  }=:\varepsilon
\]
as claimed.  Finally, to establish~\eqref{eq:cm}, we return to \eqref{eq:hrho} and note that the left-hand side at most $c+\varepsilon$ for  $c$ the minimizer in the previous line. Then 
\[
c\ge m - \log(r+\rho)\mu(r\D)  - \varepsilon
\]
Note that one may insert \eqref{eq:Rieszmass} on the right-hand side to control the mass. 
\end{proof}

 We now apply the Cartan estimate for logarithmic potentials  to  the Riesz representation~\eqref{eq:vRiesz} in order to derive  lower bounds on $v$ up to a small measure of exceptions. 
 
 \begin{cor}
 \label{cor:vRiesz}
 Let $v$ be as in Lemma~\ref{lem:vRiesz} with $\rho=1-3\delta$,  $0<\delta< \frac13$. Then for all $0<H\le 1$
 there exist disks $D(z_{j},s_{j})$ so that 
 \[
v(z)\ge m  - (M-m) \big[2  \delta^{-3}\log(2/\delta) +  \delta^{-2} \log(2e/H) \big] 
 \]
 for all $z\in r_{1}\D \setminus  \bigcup_{j} D(z_{j},s_{j}) $  with $\sum_{j} s_{j} \le 5 H $ and $r_{1}=1-2\delta$. 
 \end{cor}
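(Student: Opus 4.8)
The plan is to combine the Riesz representation \eqref{eq:vRiesz} with the classical Cartan estimate on logarithmic potentials. First I would invoke Lemma~\ref{lem:vRiesz} with $\rho = 1-3\delta$ and $r_1 = 1-2\delta$; I need to fix an intermediate radius $r$ with $\rho < r_1 < r < 1$, the natural choice being $r = 1-\delta$. With these choices the combinatorial quantities in \eqref{eq:mu}, \eqref{eq:hminc}, \eqref{eq:cm} become explicit functions of $\delta$, and the main task is to bound them from below/above by elementary estimates. Concretely, $\log\!\big(\tfrac{1+\rho r}{\rho+r}\big) \gtrsim \delta^2$ (expand $1+\rho r - (\rho+r) = (1-\rho)(1-r)$ and divide by $\rho+r \le 2$), so the Riesz mass satisfies $\mu(r\D) \le (M-m)\,\delta^{-2}\cdot\mathrm{const}$; and $\tfrac{r+r_1}{r-r_1} \le \tfrac{2}{\delta}$ while $\log\!\big(\tfrac{1+\rho r}{1-r^2}\big) \le \log(2/\delta)$, so that the harmonic deviation $\varepsilon$ from \eqref{eq:hminc} obeys $\varepsilon \le (M-m)\,\delta^{-3}\log(2/\delta)\cdot\mathrm{const}$. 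These two bounds, together with $-\log(r+\rho) \le \log 2$ and \eqref{eq:cm}, give a lower bound on the minimizing constant $c$ of the form $c \ge m - (M-m)\big[\mathrm{const}\cdot\delta^{-3}\log(2/\delta)\big]$.

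Next I would apply the Cartan lemma for logarithmic potentials (in the normalization that for a positive measure $\nu$ of total mass $t$ and any $h \in (0,1)$, the set where $\int \log|z-\zeta|\,\nu(d\zeta) < t\log(h/e)$ — equivalently $< -t\log(e/h)$ — is covered by disks $D(z_j,s_j)$ with $\sum_j s_j \le 5h$, the factor $5$ being the standard constant) to the measure $\mu$ on $r\D$, rescaled to total mass $\mu(r\D)$. Choosing the Cartan parameter to be $H$ itself, I get that off a union of disks with $\sum_j s_j \le 5H$ one has
\[
\int_{r\D}\log|z-w|\,\mu(dz) \ge -\mu(r\D)\,\log(e/H) \ge -(M-m)\,\delta^{-2}\log(e/H)\cdot\mathrm{const}.
\]
Here I should double-check the precise bookkeeping of the constant: the lemma statement has $\delta^{-2}\log(2e/H)$, so the relevant scaling normalizes $\mu$ by $\mu(r\D)$ and absorbs a factor $2$ (coming, e.g., from $r<1$ so that on $r\D$ the logarithmic kernel is bounded and the disk of radius $r$ contributes at most $\log 2$ per unit mass) — the $2$ inside $\log(2e/H)$ is exactly this slack.

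Finally I would add the two contributions: for $w \in r_1\D$ outside the exceptional disks,
\[
v(w) = \int_{r\D}\log|z-w|\,\mu(dz) + h(w) \ge \big(-(M-m)\,\mathrm{const}\cdot\delta^{-2}\log(2e/H)\big) + \big(c - \varepsilon\big),
\]
using $h(w) \ge c - \varepsilon$ from \eqref{eq:hminc}, and then inserting the lower bound on $c$ from \eqref{eq:cm}. Collecting the two error terms $(M-m)\,\mathrm{const}\cdot\delta^{-3}\log(2/\delta)$ and $(M-m)\,\mathrm{const}\cdot\delta^{-2}\log(2e/H)$ — and verifying that the constants can be taken to be $2$ and $1$ respectively after the choices $r=1-\delta$, $\rho=1-3\delta$, $r_1=1-2\delta$ are substituted and the ``$O$''-terms in $1+O(q)$-type expansions are handled by taking $\delta$ small — yields exactly
\[
v(z) \ge m - (M-m)\big[2\delta^{-3}\log(2/\delta) + \delta^{-2}\log(2e/H)\big]
\]
on $r_1\D \setminus \bigcup_j D(z_j,s_j)$ with $\sum_j s_j \le 5H$, as claimed.

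The main obstacle I anticipate is purely the constant-chasing: tracking the numerical factors through the three displayed bounds of Lemma~\ref{lem:vRiesz}, the rescaling in the Cartan lemma, and the elementary inequalities $(1-\rho)(1-r) = 3\delta^2$, $\rho+r \le 2$, etc., so that everything fits under the stated coefficients $2\delta^{-3}\log(2/\delta)$ and $\delta^{-2}\log(2e/H)$ with no extra multiplicative slack. The conceptual content — Riesz representation plus Cartan — is routine; making the bound come out with these specific constants (rather than, say, $\mathrm{const}\cdot\delta^{-3}$) is where care is needed, and it is essential that the radii $\rho, r_1, r$ are spread at spacing comparable to $\delta$ so that Harnack on $r_1\D \subset r\D$ only costs a factor $\lesssim \delta^{-1}$.
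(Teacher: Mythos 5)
Your proposal follows essentially the same route as the paper: invoke Lemma~\ref{lem:vRiesz} with $\rho=1-3\delta$, $r_1=1-2\delta$, $r=1-\delta$, bound $\log\big(\tfrac{1+\rho r}{\rho+r}\big)\ge\delta^2$ so that $\mu(r\D)\le\delta^{-2}(M-m)$ and $\varepsilon\le(M-m)\delta^{-3}\log(2/\delta)$, apply Cartan's estimate with parameter $H$ to the logarithmic potential, and add the contributions, with the factor $2$ inside $\log(2e/H)$ absorbing the $\log(r+\rho)\le\log 2$ term from \eqref{eq:cm} exactly as in the paper. The constant-chasing you flag works out as anticipated (no $O(q)$-expansions are actually involved here, and no smallness of $\delta$ beyond $\delta<\tfrac13$ is needed), so the argument is correct.
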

\begin{proof}
By Cartan's estimate for any $H>0$ there exist disks $D(z_{j},s_{j})$ such that  $\sum_{j} s_{j}\le {  {5}} H$ and 
\EQ{\label{eq:cartan}
\int_{r\D}\log{|w-z|}\, \mu(d w)  &\ge \mu(r\D) \log(H/e), \qquad \forall \, z\in r_{1}\D \setminus  \bigcup_{j} D(z_{j},s_{j}) 
}
See \cite{Lev}, Theorem~3, Section~11.2. 
To invoke the measure bound \eqref{eq:mu} we estimate
\EQ{\nn
\log\Big(\frac{1+\rho r}{\rho + r}\Big) & = \log \Big( \frac{2-4\delta+3\delta^{2}}{2-4\delta} \Big) \\
&{  {= \log\Big( 1 + \frac{3\delta^{2}}{2-4\delta} \Big) \ge \log(1+ \frac{3}{2}\delta^{2}) \ge \delta^{2}}}
}
since $\delta^{2}\le \frac12$ and $\log{(1+\frac{3}{2}x)}\geq x$ for $0\leq x\leq \frac{1}{2}$.   Consequently, 
\[
\mu(r\D) \le \delta^{-2}(M-m)
\]
 Next,
\EQ{\nn 
\frac{1+\rho r}{1-r^{2}} &\le \frac{2}{2\delta-\delta^{2}}\le \delta^{-1} (1+\delta)
}
as well as 
\[
\frac{r+r_{1}}{r-r_{1}} = \frac{2-3\delta}{\delta} \le 2\delta^{-1}
\]
whence \eqref{eq:hminc} implies 
\[
\min_{c\in \R}\max_{|w|\leq r_{1}}\ |h(w)-c| \le \varepsilon  \le (M-m) \delta^{-3}\log(2/\delta) =:\tilde{\varepsilon}.
\]
Finally, by \eqref{eq:cm}, one has 
\EQ{\nn
c &\ge m  - \varepsilon - \log(r+\rho) \mu(r\D)\ge m  - \varepsilon - \log(2) \mu(r\D) 
}
In view of \eqref{eq:vRiesz} and the preceding estimates we obtain
\EQ{
v(z) &\ge  c + \mu(r\D) \log(H/e)  - \varepsilon \ge m - 2 \varepsilon + \log(H/(2e))  \mu(r\D)\\
&\ge m  - (M-m) \big[2  \delta^{-3}\log(2/\delta) -  \delta^{-2} \log(H/(2e)) \big] 
}
for all $z$ as in \eqref{eq:cartan}. 
\end{proof}

By means of  the conformal transformation $\Phi_{q}$ from Lemma~\ref{lem:Phir} we can obtain a version of the Riesz representation 
theorem on thin rectangles $\Rect(q)$. 

\begin{cor}
\label{cor:RRiesz}
There exists $q_{*}\in (0,1]$ with the following property: 
let $u$ be subharmonic on $\Rect(q)$ for some $0<q\le q_{*}$, continuous up to the boundary. Assume that $u\le M$ on $\Rect(q)$ and $\max\limits_{x\in[1/4,3/4]}u(x)\ge m$.  Then 
\EQ{\label{eq:M-m}
u(x)\ge m  - (M-m)  \exp\big(\frac{9\pi}{8q}\big)  \big[\log(4) + \frac{9\pi}{4q}  +  \exp\big(-\frac{3\pi}{8q}\big)  \log(2e/H) \big] 
}
for all $x\in [1/4,3/4]\setminus \bigcup_{j}I_{j}$ where $\sum_{j} |I_{j}|\le  3  H \exp\big(\frac{3\pi}{4q}\big)$. 
\end{cor}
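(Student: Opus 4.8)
The plan is to transport the disk statement of Corollary~\ref{cor:vRiesz} to the rectangle $\Rect(q)$ via the conformal map $\Phi_q$ from Lemma~\ref{lem:Phir}. Concretely, set $v := u\circ \Phi_q$, which is subharmonic on a neighborhood of $\overline{\D}$ (extending $u$ slightly beyond $\Rect(q)$ is harmless since $u$ is subharmonic on the closed rectangle), with $v\le M$ on $\D$. By Lemma~\ref{lem:Phir}, $\Phi_q$ maps the segment $[a_1(q),a_2(q)]$ onto $[1/4,3/4]$, and $a_j(q) = 1-\delta_j(q)$ with $\delta_1(q) = 4\exp(-\pi/(8q))(1+O(q))$, $\delta_2(q) = 4\exp(-3\pi/(8q))(1+O(q))$. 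So the hypothesis $\max_{x\in[1/4,3/4]}u(x)\ge m$ gives $\sup_{\rho\D} v \ge m$ for any $\rho$ with $\rho\ge a_2(q)$, i.e.\ we may take $\rho = 1-3\delta$ with $3\delta = \delta_2(q)$, hence $\delta \asymp \tfrac43 \exp(-3\pi/(8q))$. For $q\le q_*$ small enough this $\delta$ is below $\tfrac13$, so Corollary~\ref{cor:vRiesz} applies with $r_1 = 1-2\delta$.

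Next I would check that $[a_1(q),a_2(q)]\subset r_1\D$: since $a_1(q) = 1-\delta_1(q)$ and $\delta_1(q)\asymp 4\exp(-\pi/(8q))$ is much larger than $2\delta \asymp \tfrac83\exp(-3\pi/(8q))$ for small $q$, indeed $a_1(q) < 1-2\delta = r_1$, so the whole preimage segment sits inside $r_1\D$. Applying Corollary~\ref{cor:vRiesz} with the chosen $\delta$ yields disks $D(z_j,s_j)$ with $\sum_j s_j\le 5H$ such that
\[
v(z)\ge m - (M-m)\big[2\delta^{-3}\log(2/\delta) + \delta^{-2}\log(2e/H)\big]
\]
off $\bigcup_j D(z_j,s_j)$. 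Now $\delta^{-1}\asymp \tfrac34\exp(3\pi/(8q))$, so $\delta^{-3}\asymp \exp(9\pi/(8q))$ up to constants and $\log(2/\delta)\asymp 3\pi/(8q)$; matching the bookkeeping in the statement (the $\exp(9\pi/(8q))[\log 4 + 9\pi/(4q) + \exp(-3\pi/(8q))\log(2e/H)]$ shape) amounts to absorbing the $O(q)$ corrections and universal constants, which forces the restriction $q\le q_*$ for some absolute $q_*$.

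Finally I would pull the exceptional set back to the segment $[1/4,3/4]$. The bad set on the disk side is $E' := [a_1(q),a_2(q)]\cap\bigcup_j D(z_j,s_j)$; its image $\Phi_q(E')$ is the exceptional set $\bigcup_j I_j$ on $[1/4,3/4]$. Using the last assertion of Lemma~\ref{lem:Phir}, $|\Phi_q(E)|\le 2\delta_2(q)^{-2}|E|$ for measurable $E\subset[a_1(q),a_2(q)]$ and $q$ small, together with $|E'|\le \sum_j 2s_j \le 10H$ (each disk contributes a segment of length at most $2s_j$), gives $\sum_j|I_j|\le 2\delta_2(q)^{-2}\cdot 10H$. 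Since $\delta_2(q)^{-2}\asymp \tfrac1{16}\exp(3\pi/(4q))(1+O(q))$, this is $\le 3H\exp(3\pi/(4q))$ for $q\le q_*$ after absorbing constants and lower-order terms. I expect the main obstacle to be purely the constant-chasing: verifying that all the $O(q)$ error terms from Lemma~\ref{lem:Phir} (in $\delta_j(q)$, $\theta(q)$, and the Jacobian bound) can be swallowed into a single clean exponential bound for a uniform threshold $q_*$, and in particular that the crude estimate $|E'|\le 10H$ combined with the distortion factor lands below the stated $3H\exp(3\pi/(4q))$ rather than something slightly larger — one may need to take $q_*$ small enough that the $(1+O(q))$ factor in $2\delta_2(q)^{-2}$ is at most, say, $3/(20)\cdot 16 = 12/5$... the precise numerology is routine but must be done carefully.
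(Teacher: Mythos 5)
Your argument is correct and is essentially the paper's proof: transport to the disk via $v=u\circ\Phi_q$, apply Corollary~\ref{cor:vRiesz} with $\delta\asymp\exp(-3\pi/(8q))$, and pull the Cartan disks back to $[1/4,3/4]$ through the Jacobian bound $|\Phi_q(E)|\le 2\delta_2(q)^{-2}|E|$ of Lemma~\ref{lem:Phir}; the paper simply fixes $\delta:=\exp(-3\pi/(8q))$ (so that $3\delta\le\delta_2(q)$) instead of $\delta=\delta_2(q)/3$, which lets the bracket in \eqref{eq:M-m} come out without tracking the $(1+O(q))$ factors you carry along. One cosmetic remark: the containment of the preimage segment $[a_1(q),a_2(q)]$ in $r_1\D$ is governed by the endpoint of \emph{larger} modulus, i.e.\ by $a_2(q)=\rho=1-3\delta<1-2\delta=r_1$ (automatic from your choice of $\delta$), not by the inequality $a_1(q)<r_1$ that you verify.
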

\begin{proof}
Let $v=u\circ \Phi_{q}$, with $\Phi_{q}$ as in Lemma~\ref{lem:Phir}. Then $v$ satisfies the assumptions of Corollary~\ref{cor:vRiesz} with $\rho\ge 1-\delta_{2}(q)$, and 
\EQ{\label{eq:deldel2}
\delta_{2}(q) & =4 \exp\big(-\frac{3\pi}{8q}\big) (1+O(q))\ge 3\delta \\
\delta &:= \exp\big(-\frac{3\pi}{8q}\big) {  {< \frac 13}},
}
provided $q_{*}$ is small enough. By Corollary~\ref{cor:vRiesz} we have 
\EQ{\nn 
v(z)& \ge m  - (M-m)  \exp\big(\frac{9\pi}{8q}\big)  \big[2 \log(2/\delta) +  \delta \log(2e/H) \big] \\
&= m  - (M-m)  \exp\big(\frac{9\pi}{8q}\big)  \big[\log(4) + \frac{9\pi}{4q}  +  \exp\big(-\frac{3\pi}{8q}\big)  \log(2e/H) \big] 
}
for all $z\in r_{1}\D\setminus \bigcup_{j} D(z_{j},s_{j})$, $\sum_{j} s_{j}\le {  {5}}H$, where $r_{1}=1-2\delta$.  The inverse image of $[1/4,3/4]$ under $\Phi_{q}$ is $[a_{1}(q),a_{2}(q)]$.  Define $\tilde I_{j}:= \R\bigcap D(z_{j},s_{j})$, $I_{j}=\Phi_{q}(\tilde I_{j})$,  and $E:=\bigcup_{j} \tilde I_{j}$ so that $\sum_{j}|\tilde I_{j}|\le  10  H$.  By Lemma~\ref{lem:Phir} we have 
\[
|\Phi_{q}(E)|\le {  {20}} H \delta_{2}(q)^{-2} {  {<3}} H \exp\big(\frac{3\pi}{4q}\big)
\]
as claimed.   
\end{proof}

Next, we apply the previous  results on subharmonic functions to $\log|F|$, where $F$ is analytic.

\begin{cor}\label{cor:muehe}
Let $F$ be an analytic function on a neighborhood of $\Rect(q)$ with  $0< q\leq q^*$, and $F$ not identically equal to zero.     
Denote 
$$ B_{1}:=\|F\|_{L^{2}([1/4,3/4])} ,\qquad   B_{2}:=\|F\|_{L^{2}(\del \Rect(q))} .$$
Then for some absolute constant $C_{0}$,  and all $H>0$, 
\EQ{
\label{eq:|F|}
   B_{1}^{K+1} &\le  e^{\frac{C_{0}K}{q}}\, B_{2}^{K} \, |F(x)|, \\
\text{holds for any}\ \ K &{  {\ge}} \exp\big(\frac{9\pi}{8q}\big)  \big[\log(4) + \frac{9\pi}{4q}  +  \exp\big(-\frac{3\pi}{8q}\big)  \log(2e/H) \big]  
}
for all $x\in [1/4,3/4]\setminus \bigcup_{j}I_{j}$ where $\sum_{j} |I_{j}|\le  3  H \exp\big(\frac{3\pi}{4q}\big)$. 
\end{cor}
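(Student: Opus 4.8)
The plan is to apply Corollary~\ref{cor:RRiesz} to the subharmonic function $v:=\log|F|$ and then exponentiate. Since $F$ is analytic on a neighborhood of $\Rect(q)$ and $F\not\equiv0$, the function $v$ is subharmonic there and continuous, as a map into $[-\infty,\infty)$, up to the boundary, so the hypotheses of Corollary~\ref{cor:RRiesz} are in force once the parameters $m,M$ are chosen. I take the lower parameter $m:=\log B_1$: the interval $[1/4,3/4]$ has length $\tfrac12<1$, hence $B_1=\|F\|_{L^2([1/4,3/4])}\le\|F\|_{L^\infty([1/4,3/4])}$, so that
\[
\max_{x\in[1/4,3/4]}v(x)=\log\|F\|_{L^\infty([1/4,3/4])}\ \ge\ \log B_1\ =\ m .
\]
(When the argument of Corollary~\ref{cor:RRiesz} is unwound through the conformal map $\Phi_q$, the maximizing point of $[1/4,3/4]$ pulls back into $[a_1(q),a_2(q)]\subset(1-3\delta)\D$ with $\delta=\exp(-3\pi/8q)$, so the requirement $\sup_{(1-3\delta)\D}(v\circ\Phi_q)\ge m$ is met.)

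For the upper parameter, take $M:=\log\|F\|_{L^\infty(\Rect(q))}$, which is finite and, by the maximum principle, equals $\log\|F\|_{L^\infty(\partial\Rect(q))}$; in particular $m\le M$. Corollary~\ref{cor:RRiesz} then yields, for every $0<H\le1$ and all $x\in[1/4,3/4]\setminus\bigcup_jI_j$ with $\sum_j|I_j|\le 3H\exp(3\pi/4q)$,
\[
\log|F(x)|\ \ge\ m-(M-m)\,\exp\Big(\frac{9\pi}{8q}\Big)\Big[\log4+\frac{9\pi}{4q}+\exp\Big(-\frac{3\pi}{8q}\Big)\log(2e/H)\Big].
\]
Because $m\le M$, enlarging the exponential factor on the right only weakens the estimate, so for any $K$ at least that factor one gets $\log|F(x)|\ge m-(M-m)K=(K+1)m-KM$, which upon exponentiation reads
\[
B_1^{K+1}\ \le\ \|F\|_{L^\infty(\Rect(q))}^{K}\,|F(x)| .
\]

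It remains to replace $\|F\|_{L^\infty(\Rect(q))}$ by $B_2$ at the cost of a factor $e^{C_0/q}$. Granting the bound $\|F\|_{L^\infty(\Rect(q))}\le e^{C_0/q}B_2$ for an absolute constant $C_0$, substitution into the last display gives precisely~\eqref{eq:|F|}. This bound comes from the sub--mean value property of $|F|^2$ over disks of radius comparable to $q$, together with the analyticity of $F$ on a neighborhood of $\Rect(q)$ of definite size; this is the single place where the size of that neighborhood enters quantitatively, and it is the step I expect to be the crux. (For $F$ analytic merely on an arbitrarily thin neighborhood no such bound can hold — e.g.\ $F(z)=(z-p)^{-n}$ with $p$ at distance $\varepsilon$ just outside a corner gives $\|F\|_{L^\infty(\Rect(q))}/\|F\|_{L^2(\partial\Rect(q))}\asymp\varepsilon^{-1/2}$ — so one genuinely uses more than bare analyticity on some neighborhood.)

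An approach that bypasses this last point is to run the proofs of Lemmas~\ref{lem:vRiesz}--\ref{cor:RRiesz} directly on $\Rect(q)$ instead of on the disk: one estimates the harmonic term $x\mapsto\int_{\partial\Rect(q)}\log|F|\,d\omega_x$ from above on a central sub-rectangle by $\log B_2+C/q$ (decomposing $\{|F|>B_2\}\subset\partial\Rect(q)$ dyadically and using the $O(1/q)$ bound on the density of harmonic measure from central points), propagates the one-point lower bound from the maximizer by Harnack, and controls the Riesz mass by Jensen's formula before invoking Cartan's inequality; this reproduces \eqref{eq:|F|} with the same shape of constants.
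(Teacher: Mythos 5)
The main line of your argument has a genuine gap at the step where you ``grant'' the bound $\|F\|_{L^\infty(\Rect(q))}\le e^{C_0/q}B_2$. This inequality is false under the hypotheses of the corollary --- not only for $F$ analytic on a thin neighborhood, as your own counterexample indicates, but even for polynomials: with $q$ fixed and $F(z)=z^n$, the supremum $(1+q^2)^{n/2}$ is attained at the corner $1+iq$, while $|z|^{2n}$ stays within a constant factor of its maximum only on a boundary arc of length $O(1/(nq))$, so $\|F\|_{L^\infty(\Rect(q))}/\|F\|_{L^2(\del\Rect(q))}\gtrsim\sqrt{nq}\to\infty$. Moreover, the mechanism you invoke (sub-mean value of $|F|^2$ over disks of radius $\sim q$) would control $\|F\|_\infty$ by an \emph{area} $L^2$ norm over a neighborhood, not by the arc-length $L^2$ norm $B_2$ on the boundary curve. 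So taking $M=\log\|F\|_{L^\infty(\Rect(q))}$ and then trying to trade the $L^\infty$ norm for $B_2$ cannot be made to work.

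The fix is the one you sketch in your final paragraph, and it is exactly what the paper does: one never uses a pointwise upper bound on $\log|F|$ at all. In the Riesz decomposition of Lemma~\ref{lem:vRiesz}, the constant $M$ enters only through the harmonic majorant $h_0(w)=\int_0^1 v(e(\theta))\,P_w(d\theta)$, cf.~\eqref{eq:h0M} and \eqref{eq:hoben*}; for $v=\log|F\circ\Phi_q|$, Jensen's inequality gives $h_0(w)\le\log\big(\int_0^1|F\circ\Phi_q(e(\theta))|\,P_w(d\theta)\big)$, and Cauchy--Schwarz against the density $d\theta/d\sigma$ of the transplanted boundary measure --- whose $L^2$ norm is computed from the explicit Schwarz--Christoffel map in Lemma~\ref{lem:Phir} to be $O(q^{-1/2})$ --- together with $\|P_w(d\theta)/d\theta\|_\infty\le 2(1-|w|)^{-1}$ yields $h_0(w)\le\log B_2+C_0q^{-1}=:M$ on $r\D$. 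With this $M$ and $m=\log B_1$ (your justification for $m$ is correct and is the paper's), Corollary~\ref{cor:RRiesz} applies verbatim and exponentiation gives \eqref{eq:|F|}. Running the same scheme directly on the rectangle with harmonic measure, as you propose, would also work, but the harmonic-measure density estimate $O(1/q)$ you would need there is precisely what the conformal map supplies, so it is not a genuine simplification.
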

\begin{proof}
We apply our previous results to  $u(z):=\log|F(z)|$, which is subharmonic on a neighborhood of $\Rect(q)$. However,  Corollary~\ref{cor:RRiesz} does not apply directly since we do not have a point wise upper bound on $u$. Returning to the subharmonic function $v=u\circ \Phi_{q}$ on the unit disk~$\D$, we note that the point wise upper bound $M$ on $v$ only entered through the estimate $h_{0}\le M$, see~\eqref{eq:h0M}, \eqref{eq:hoben*}.  The analytic function $\tilde F=F\circ \Phi_{q}$ satisfies $\log|\tilde F|=v$. Denoting by 
\EQ{\nn
P_{w}(d\theta)= P_{|w|}(d(\theta-\phi)){  {=\frac{1-|w|^2}{1-2|w| \cos{(2\pi(\theta-\phi))}+|w|^2} }}
} 
the Poisson kernel centered at $w=|w|e(\phi)$, we estimate $h_{0}$ from~\eqref{eq:h0v} as follows: 
\EQ{\label{eq:h0w2}
h_{0}(w) &= {  {\int_0^1}} v(e(\theta)) \, P_{w}(d\theta) 
= {  {\int_0^1}} \log|\tilde F(e(\theta)) |\, P_{w}(d\theta) \\
&\le  \log\Big ( {  {\int_0^1}} |\tilde F(e(\theta)) | \, P_{w}(d\theta)   \Big)  \\
&\le  \log\Big ( {  {\int_0^1}} |\tilde F(e(\theta)) | \, d\theta \; \big\| \frac{P_{w}(d\theta) }{d\theta}\big\|_{\infty}\Big) \\
&\le  \log  ( B_{2} ) + \log\big( \| \frac{d\theta}{d\sigma}\|_{L^{2}(\del\Rect(q))}     \big) + \log\big\| \frac{P_{w}(d\theta) }{d\theta}\big\|_{\infty}
}
where $d\sigma$ denotes arc length measure on $\del\Rect(q)$, and the correspondence between $\del\D$ and $\del\Rect(q)$ is given by $\xi\mapsto \Phi_{q}(e(\xi))$. 
On the one hand, 
\[
\big\| \frac{P_{w}(d\theta) }{d\theta}\big\|_{\infty} \le {  {2}} (1 -|w|)^{-1}
\]
and on the other hand, 
\EQ{\label{eq:bloed}
 \| \frac{d\theta}{d\sigma}\|_{L^{2}(\del\Rect(q))}^{2}  &=  \int_{\del\Rect(q)} \Big| \frac{d\theta}{d\sigma}\Big|^{2}\, d\sigma  = 
 {  {\int_0^1}} \Big| \frac{d\sigma}{d\theta}(\xi)\Big|^{-1}\, d\xi
}
Using the notations of  Lemma~\ref{lem:Phir}, the  boundary map $\del\D\to \del\Rect(q)$ induced by $\Phi_{q}$ is 
\EQ{\nn
\xi \mapsto \zeta(\xi) & := iH(k)^{-1}\arcsn(x(\xi),k),\\
 x(\xi) &:=\varphi(e(\xi)) = {  {- \cot(\pi\xi),\;\; x'(\xi)=\pi(1+x(\xi)^{2}) }}
}
where  $\varphi(w)= i\frac{w+1}{1-w}$  takes the disk to the upper half-plane. If $0< 2\pi \xi < \theta(q)$, then $\zeta(\xi)=1+iy(\xi)$ where 
\EQ{\nn
\frac{dy}{d\xi}= 
 \frac{\pi}{H(k)} \frac{1+x^2}{\sqrt{(x^2-1)(k^2 x^2-1)}}\geq \frac{\pi}{k H(k)},\quad x(\xi)<-k^{-1}. 
}
Therefore, this region contributes 
\[
\le  \frac{1}{2} k H(k) \theta(q) \les 1 \text{\ \ uniformly in \ }q
\]
to the integral in \eqref{eq:bloed}. Next, if $\theta(q) <  2\pi \xi <  \pi/2$, then $\zeta=u+iq$ with 
\EQ{\nn
\Big| \frac{du}{d\xi} \Big|
= \frac{\pi}{H(k)} \frac{1+x^2}{\sqrt{(x^2-1)(1-k^2 x^2)}}\geq \frac{\pi}{H(k)},\quad -k^{-1}<x(\xi)<-1  
}
and so this case contributes $\les H(k)$ to~\eqref{eq:bloed}. Finally, the region $\pi/2< 2\pi \xi <   2  \pi$ similarly adds at most $\les H(k)$ to~\eqref{eq:bloed}.

Combining these estimates with~\eqref{eq:h0w2}  yields
\EQ{\label{eq:Mnew}
h_{0}(w) &\le \log  ( B_{2} ) + \log\big(CH(k)) + \log(2/(\pi(1-r)) \\
&\le \log(B_{2}) + C_0 q^{-1} =: M
}
for all $|w|<r=1-\delta$ with some absolute constant $C_0$, cf.~\eqref{eq:deldel2}. This bound replaces \eqref{eq:h0M} and \eqref{eq:hoben*} above. 

As for the lower bound $m$ on $u$, one has $m\ge \log(B_{1})$ and thus \eqref{eq:M-m} holds with 
\[
M-m\le \log(B_{2}/B_{1})+C_0 q^{-1}
\]
Finally, \eqref{eq:|F|} follows from \eqref{eq:M-m} by exponentiating. 
\end{proof}

Integrating the previous result over a small set of $x$ yields the following localization estimate for the $L^{2}$ norm of $F$. 

\begin{prop}\label{prop:B1B2}
There exists an absolute constant $C_{1}>0$ with the following property: 
Let $F$ be an analytic function on a neighborhood of $\Rect(q)$ with $0< q\leq q^*$, and $F$ not identically equal to zero.     Denote 
$$ B_{1}:=\|F\|_{L^{2}([1/4,3/4])} ,\qquad   B_{2}:=\|F\|_{L^{2}(\del \Rect(q))} .$$  For any $J\subset [1/4,3/4]$  some Borel set of positive measure, 
\[
B_{1}\le e^{\frac{C_{1}}{q}}     B_{2}^{1-\kappa} \|F\|_{L^{2}(J)}^{\kappa}
\]
with $0<\kappa\le  e^{-\frac{C_{1}}{q}} \big( \log(1/|J|)\big)^{-1}$. 
\end{prop}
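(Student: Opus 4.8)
The plan is to combine Corollary~\ref{cor:muehe} with an averaging argument over the set $J$, trading off the exceptional-set size $H$ against the gain we extract on $J$. Fix $J\subset[1/4,3/4]$ of positive measure, and let $H>0$ be a parameter to be chosen at the end. Apply Corollary~\ref{cor:muehe} with any admissible $K$; it gives an exceptional union $\bigcup_j I_j$ of total length at most $3H\exp(3\pi/(4q))$ such that $B_1^{K+1}\le e^{C_0 K/q}B_2^K|F(x)|$ for all $x\in[1/4,3/4]\setminus\bigcup_j I_j$. The point is that if we choose $H$ small enough that $3H\exp(3\pi/(4q))<|J|$, then $J\setminus\bigcup_j I_j$ has measure at least $|J|/2$, and in particular is nonempty with controlled measure.

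The key step is then to integrate the pointwise bound over $J\setminus\bigcup_j I_j$ rather than using it at a single point. Squaring $B_1^{K+1}\le e^{C_0K/q}B_2^K|F(x)|$ and integrating $dx$ over $J\setminus\bigcup_jI_j$ yields
\[
|J\setminus{\textstyle\bigcup_j}I_j|\, B_1^{2(K+1)}\le e^{2C_0 K/q}B_2^{2K}\|F\|_{L^2(J)}^2,
\]
so with $|J\setminus\bigcup_jI_j|\ge |J|/2$ we get
\[
B_1^{K+1}\le \sqrt{2/|J|}\; e^{C_0K/q}\,B_2^{K}\,\|F\|_{L^2(J)}.
\]
Now take $K$ to be the smallest admissible value from Corollary~\ref{cor:muehe}. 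Here I would choose $H$ so that $3H\exp(3\pi/(4q))=|J|/2$, i.e. $H=\tfrac16|J|\exp(-3\pi/(4q))$; then $\log(2e/H)=\log(12e/|J|)+3\pi/(4q)$, and the admissible lower bound on $K$ becomes a quantity of the form $e^{C/q}\log(1/|J|)$ (for $|J|$ small; the boundedly-many-other-terms are absorbed into the $e^{C/q}$ factor since $q\le q^*\le 1$). Write $K=e^{C_2/q}\log(1/|J|)$ for a suitable absolute constant $C_2$.

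Finally I rewrite the displayed inequality in the multiplicative-interpolation form. Set $\kappa:=1/(K+1)$, so $0<\kappa\le e^{-C_2/q}(\log(1/|J|))^{-1}$ as required (after enlarging $C_2$ to absorb the $+1$ and the $\sqrt{2/|J|}$ prefactor, using $\log(1/|J|)\ge$ const since we may assume $|J|$ bounded away from $1$—or simply note $\sqrt{2/|J|}\le e^{c\log(1/|J|)/q}$ trivially). Taking $(K+1)$-th roots,
\[
B_1\le \big(\sqrt{2/|J|}\big)^{\kappa}e^{C_0 K\kappa/q}B_2^{1-\kappa}\|F\|_{L^2(J)}^{\kappa}
\le e^{C_1/q}B_2^{1-\kappa}\|F\|_{L^2(J)}^{\kappa},
\]
since $K\kappa=K/(K+1)\le 1$ and $(\sqrt{2/|J|})^{\kappa}=\exp(\tfrac{\kappa}{2}\log(2/|J|))\le\exp(\tfrac12 e^{-C_2/q})=O(1)$. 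This is the claimed bound, with $C_1$ absolute. The only mild obstacle is bookkeeping: one must make sure the lower bound on the admissible $K$ in Corollary~\ref{cor:muehe}, which carries the factor $\exp(9\pi/(8q))$ in front of $\log(2e/H)$, is genuinely of the form $e^{C_2/q}\log(1/|J|)$ after the substitution $H=\tfrac16|J|e^{-3\pi/(4q)}$—this works precisely because $\exp(9\pi/(8q))\cdot\exp(-3\pi/(8q))=\exp(3\pi/(4q))$ is still $\le e^{C/q}$, and all the $q$-dependent but $|J|$-independent terms are dominated by $e^{C/q}$; and one must keep $\kappa\le 1$ so that the exponents $1-\kappa$ and $\kappa$ are both nonnegative, which is automatic since $K\ge1$.
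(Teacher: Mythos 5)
Your proposal is correct and follows essentially the same route as the paper's proof: choose $H$ so that the exceptional set $\bigcup_j I_j$ covers at most half of $J$, integrate the pointwise lower bound of Corollary~\ref{cor:muehe} over the remaining part of $J$, take $(K+1)$-th roots with $\kappa=(K+1)^{-1}$, and absorb the prefactor $(|J|/2)^{-\kappa/2}$ and the $|J|$-independent terms into $e^{C_1/q}$ using $\log(1/|J|)\ge\log 2$. The only cosmetic difference is that the paper states the conclusion for all $\kappa\le K_0^{-1}$ (by allowing $K$ to increase), whereas you fix one value of $\kappa$; the extension is immediate by the same remark you already make about admissible $K$.
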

\begin{proof}
We apply Corollary~\ref{cor:muehe} with ${  {3}} H \exp\big(\frac{3\pi}{4q}\big)= \frac12|J|$. Thus, 
\EQ{\label{eq:prop61}
  B_{1}^{K+1} (|J|/2)^{\frac12} & \le  e^{\frac{C_{0}K}{q}}\, B_{2}^{K} \, \|F\|_{L^{2}(J)} \\
  K &:= \exp\big(\frac{9\pi}{8q}\big)  \big[\log(4) + \frac{9\pi}{4q}  +  \exp\big(-\frac{3\pi}{8q}\big) \big( \log({  {12}} e/|J|) + \frac{3\pi}{4q}\big) \big]  
} 
or 
\EQ{\label{eq:prop62}
B_{1}\le e^{\frac{C_{0}}{q}}  (|J|/2)^{-\frac{\kappa}{2}}   B_{2}^{1-\kappa} \|F\|_{L^{2}(J)}^{\kappa},\qquad \kappa \le  (1+K)^{-1}.
}
We write $\kappa \le  (1+K)^{-1}$ instead of $\kappa =  (1+K)^{-1}$, since we may increase the value of $K$. 
One checks that 
\EQ{\label{eq:prop63}
\log{( (|J|/2)^{-\frac{\kappa}{2}} )}
\leq &\frac{\log{(2/|J|)}}{\exp\big(\frac{9\pi}{8q}\big)  \big[\log(4) + \frac{9\pi}{4q}  +  \exp\big(-\frac{3\pi}{8q}\big) \big( \log({  {12}} e/|J|) + \frac{3\pi}{4q}\big)\big]}\\
\leq &\exp\big(-\frac{3\pi}{4} \big)<0.1,
} 
uniformly in $0<q<1$ and in $|J|$. 
Note that 
\EQ{\nn
K
\leq &
\begin{cases}
\exp\big(\frac{9\pi}{8q}\big)  \big[\log(4) + \frac{9\pi}{4q}  +  \exp\big(-\frac{3\pi}{8q}\big) \big(\log{(12e)}+ \frac{3\pi}{2q}\big) \big],\quad \text{if } \log{2}\leq \log(1/|J|)<\frac{3\pi}{4q}\\
8\exp\big(\frac{9\pi}{8q}\big) \big[1+\exp\big(-\frac{3\pi}{8q}\big)\big]\log(1/|J|),\qquad\qquad \text{ if } \max\big(\log{2}, \frac{3\pi}{4q}\big)\leq \log(1/|J|)
\end{cases}\\
\leq &e^{\frac{C_2}{q}} \log(1/|J|)-1,
}
for some absolute constant $C_2>0$.
Taking $C_1:=\max{(2C_0, C_2)}$ and 
\[K_0:=e^{\frac{C_1}{q}} \log(1/|J|).\]
We conclude from \eqref{eq:prop61}, \eqref{eq:prop62} and \eqref{eq:prop63} with the estimate $K\leq K_0-1$ that
\[
B_{1}\le e^{\frac{C_{0}}{q}+0.1}   B_{2}^{1-\kappa} \|F\|_{L^{2}(J)}^{\kappa}\leq e^{\frac{C_1}{q}} B_{2}^{1-\kappa} \|F\|_{L^{2}(J)}^{\kappa},\qquad \kappa \le  K_{0}^{-1}
\]
as claimed.  
\end{proof}

We next apply Proposition~\ref{prop:B1B2} to a band limited $L^2$ function in order to obtain the main result of this section. 

\begin{prop}\label{prop:loc}
Fix $\lambda\in (0,\frac12]$ and for each integer $n$  
let $I_n\subset [n,n+1]$ be some Borel set with $|I_n|=\lambda$. 
Let $f\in L^2(\R)$ be band-limited, i.e., $\hat{f}$ is  of compact support. Then  for each $0<q\le q^*$ 
\EQ{
\label{eq:L2loc}
\|f\|_{L^2(\R)}^2 \le 12 \, e^{\frac{10C_{1}}{q}}  \Big( \sum_n \|f\|_{L^2(I_n)}^2\Big)^\kappa   \| e^{2\pi q|\xi|} \hat{f}(\xi) \|_{L^2(\R)}^{2(1-\kappa)} 
}
with $0<\kappa\le  e^{-\frac{5C_{1}}{q}} ( -\log \lambda)^{-1}$, and $C_1, q^*$ are as in  Proposition~\ref{prop:B1B2}. 
\end{prop}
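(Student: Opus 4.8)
The plan is to reduce the statement about the band-limited function $f$ on the whole line to a collection of applications of Proposition~\ref{prop:B1B2} on unit intervals. First I would fix a suitable scale: for each integer $n$, consider the rectangle $\Rect_n(q)$ of width $1$ attached to the segment $[n,n+1]$ on the real axis, i.e. the translate $n+\Rect(q)$, and the set $J_n\subset [n+1/4,n+3/4]$ of measure $\lambda' \sim \lambda$ obtained by appropriately rescaling/translating $I_n$ into the central third of that interval (if $I_n$ does not already lie there, one can first subdivide $[n,n+1]$ into finitely many pieces and relabel so that the relevant subinterval carries the central third; this only costs absolute constants). The point is that $f$ extends to an entire function of exponential type (since $\hat f$ has compact support), so $F=f$ is analytic on a neighborhood of each $\Rect_n(q)$, and Proposition~\ref{prop:B1B2} applies with $B_1 = \|f\|_{L^2([n+1/4,n+3/4])}$, $B_2 = \|f\|_{L^2(\partial \Rect_n(q))}$, and $J=J_n$.

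Next I would sum over $n$. Proposition~\ref{prop:B1B2} gives, for each $n$,
\[
\|f\|_{L^2([n+1/4,n+3/4])} \le e^{C_1/q}\, \|f\|_{L^2(\partial\Rect_n(q))}^{1-\kappa}\, \|f\|_{L^2(J_n)}^{\kappa},
\]
with $\kappa \lesssim e^{-C_1/q}(-\log\lambda)^{-1}$ uniform in $n$. Squaring, applying Hölder's inequality in $n$ with exponents $1/(1-\kappa)$ and $1/\kappa$, and using $\sum_n \|f\|_{L^2(J_n)}^2 \le \sum_n \|f\|_{L^2(I_n)}^2$ (after accounting for the subdivision constant), I get
\[
\sum_n \|f\|_{L^2([n+1/4,n+3/4])}^2 \le e^{2C_1/q}\Big(\sum_n \|f\|_{L^2(\partial\Rect_n(q))}^2\Big)^{1-\kappa}\Big(\sum_n \|f\|_{L^2(I_n)}^2\Big)^{\kappa}.
\]
Since the intervals $[n+1/4,n+3/4]$ together with a bounded-overlap covering argument recover $\|f\|_{L^2(\R)}^2$ up to an absolute constant (one may instead tile $\R$ by the central thirds after translating the lattice, or just note $\bigcup_n [n+1/4,n+3/4]$ has density $1/2$ and repeat with two shifted lattices), the left side controls $\|f\|_{L^2(\R)}^2$ up to a constant.

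The remaining point is to bound $\sum_n \|f\|_{L^2(\partial\Rect_n(q))}^2$ by the exponentially weighted Fourier norm $\|e^{2\pi q|\xi|}\hat f(\xi)\|_{L^2(\R)}^2$. The horizontal pieces of $\partial\Rect_n(q)$ are the segments $\{x\pm iq: n\le x\le n+1\}$; summing over $n$ gives the full horizontal lines $\Im z=\pm q$, and $\int_\R |f(x+iq)|^2\,dx = \int_\R |\hat f(\xi)|^2 e^{-4\pi q\xi}\,d\xi \le \|e^{2\pi q|\xi|}\hat f\|_2^2$ by Plancherel for the shifted contour (valid since $\hat f$ is compactly supported, so $f(\cdot+iq)$ has Fourier transform $e^{-2\pi q\xi}\hat f(\xi)\in L^2$), and similarly for $\Im z=-q$. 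The vertical pieces $\{n+is: |s|\le q\}$ and $\{n+1+is:|s|\le q\}$ contribute $\sum_n\int_{-q}^q |f(n+is)|^2\,ds$; by the mean value / subharmonicity of $|f|^2$ one bounds this by a constant times the integral of $|f|^2$ over a union of boundedly-overlapping disks of radius $\sim q \le 1$, hence again by $\|f(\cdot+it)\|_{L^2(\R)}^2$ integrated over $|t|\le 1$, which is $\le \|e^{2\pi q|\xi|}\hat f\|_2^2$ after adjusting $q$ by an absolute factor (or one simply absorbs this into the constant $12$ and the $e^{10C_1/q}$ by working with $\Rect(q/2)$). Collecting the constants, relabelling $\kappa$, and using $x^{1-\kappa}\le x$ appropriately yields \eqref{eq:L2loc}.

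The main obstacle, and the step requiring the most care, is the bookkeeping that turns the per-interval estimate into the global one with the stated explicit constants: (i) placing $I_n$ into the central third $[n+1/4,n+3/4]$ while keeping $\lambda$ essentially unchanged, (ii) recovering $\|f\|_{L^2(\R)}^2$ from the central thirds (overlap/shift argument), and (iii) controlling the boundary $L^2$ norms — especially the vertical edges — by the weighted Fourier norm without losing more than an absolute constant in the exponent, which is why it is convenient to prove the boundary estimate with $\Rect(q)$ but apply Proposition~\ref{prop:B1B2} with a slightly smaller $q$ so the weight $e^{2\pi q|\xi|}$ dominates $e^{2\pi(q/2)|\xi|}$ with room to spare. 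None of these is deep, but matching the factor $12\,e^{10C_1/q}$ and $\kappa \le e^{-5C_1/q}(-\log\lambda)^{-1}$ exactly is where the work lies.
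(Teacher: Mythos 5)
Your overall strategy is the same as the paper's (apply Proposition~\ref{prop:B1B2} interval by interval, use H\"older in $n$, and control the boundary $L^2$ norms by Plancherel on shifted contours), but the setup with width-one rectangles $n+\Rect(q)$ does not work, and the escape hatches you offer do not repair it. The issue is that Proposition~\ref{prop:B1B2} measures $B_1$ on $[1/4,3/4]$ and requires $J\subset[1/4,3/4]$, i.e.\ after the affine change of variables both the interval you want to bound \emph{and} the set $J$ must sit in the central half of the rectangle's base. You cannot ``rescale/translate $I_n$ into the central third'': $I_n$ is a fixed Borel subset of $[n,n+1]$ and $\|f\|_{L^2(I_n)}$ is evaluated where $I_n$ actually lies. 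For a width-one rectangle the central half of the base has length $1/2$, while a point $x\in[n,n+1]$ and the set $I_n$ can be at distance up to $1$ from each other; subdividing $[n,n+1]$ or shifting the lattice does not help, because e.g.\ $I_n$ may be entirely contained in $[n,n+\lambda]$, in which case no width-one rectangle whose central half covers a neighborhood of $n+1$ has any admissible $J$ at all. The rectangle must be wide enough that its central half contains the whole of $[n,n+1]$, which forces width at least $2$ (and comfortably $3$). This is exactly why the paper uses $\Rect_{n,t}(q)$ with vertices $n-1-t\pm iq$, $n+2+t\pm iq$: after translating and dilating by $3+2t$, the interval $[n,n+1]\supset I_n$ lands inside $[1/4,3/4]$, and the height $q$ only degrades to $q/(3+2t)\ge q/5$, which is where the exponents $5C_1/q$ and $10C_1/q$ in the statement come from.

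Two further remarks. First, once the wide rectangles are in place, the paper averages over the translation parameter $t\in[0,1]$: the bounded overlap $\sum_n\one_{[n-1-t,n+2+t)}\le 5$ handles the horizontal edges, and the average $\sum_n\int_0^1\int_{-q}^q|F(n-t+is)|^2\,ds\,dt=\int_\R\int_{-q}^q|F(x+is)|^2\,ds\,dx\le 2q\,\|e^{2\pi q|\xi|}\hat f\|_2^2$ converts the sum over vertical edges into an area integral with no loss. Your alternative for the vertical edges --- subharmonicity of $|F|^2$ plus a bounded-overlap covering by disks --- is workable, but as you note it forces you to enlarge the weight to $e^{4\pi q|\xi|}$ or to run Proposition~\ref{prop:B1B2} at height $q/2$, costing another factor in the exponent; the averaging trick avoids this entirely. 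Second, the left-hand side must be $\sum_n\|f\|_{L^2([n,n+1])}^2=\|f\|_{L^2(\R)}^2$ exactly, not a density-$1/2$ union of central thirds patched by shifted lattices; with the wide rectangles this comes for free.
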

\begin{proof}
Let $F$ be the entire function with $F=f$ on the real line. Fix $0\le t\le 1$ and define $\Rect_{n,t}(q)$ to be the rectangle with vertices $n-1-t\pm iq, n+2+t \pm iq$. 
We claim that by Proposition~\ref{prop:B1B2} we have 
\EQ{\label{eq:Rectnt}
\|f\|_{L^{2}([n,n+1])} \le e^{\frac{5C_{1}}{q}}    \|F\|_{L^{2}(\del \Rect_{n,t}(q))}^{1-\kappa} \|f\|_{L^{2}(I_n)}^{\kappa}
} 
with  $\kappa\le  e^{-\frac{5C_{1}}{q}}  \big( \log((3+2t)/|I_n|)\big)^{-1}$. To see this, we set $n=0$ without loss of generality,    translate $\Rect_{n,t}(q)\to \Rect_{n,t}(q) + 1+t$, and dilate $z\mapsto z/(3+2t)$. 
After these operations, the transformed interval $I_0$ lies in $$[(1+t)/(3+2t), (2+t)/(3+2t)]\subset [1/4, 3/4],$$  and the height $q$ becomes $q/(3+2t)\ge q/5$, whence the claim. 

Squaring, summing, and applying H\"{o}lder's inequality yields
\[
\|f\|_{L^2(\R)}^2 \le e^{\frac{10C_{1}}{q}}  \Big( \sum_n  \|F\|_{L^{2}(\del \Rect_{n,t}(q))}^2\Big) ^{1-\kappa} \Big( \sum_n \|f\|_{L^{2}(I_n)}^2\Big)^{\kappa}
\]
Let $\Erw$ denote the expected value with respect to $0\le t\le 1$, uniformly distributed.   On the one hand,  taking expectations of the previous line yields 
\EQ{\label{eq:Erw}
\|f\|_{L^2(\R)}^2 \le e^{\frac{10C_{1}}{q}}  \Big( \sum_n  \Erw \|F\|_{L^{2}(\del \Rect_{n,t}(q))}^2\Big) ^{1-\kappa} \Big( \sum_n \|f\|_{L^{2}(I_n)}^2\Big)^{\kappa}
}
On the other hand, since 
\EQ{\label{eq:onele5}
\sup_{0\le t\le 1} \sum_n  \one_{[ n-1-t, n+2+t)} \le 5
}
we have 
\EQ{ \label{eq:erw} 
\sum_n  \Erw \|F\|_{L^{2}(\del \Rect_{n,t}(q))}^2 &\le  5 \| F(\cdot +iq)\|_{L^2(\R)}^2 +  5 \| F(\cdot - iq)\|_{L^2(\R)}^2 \\
&\qquad + 2\sum_n \int_0^1 \int_{-q}^q { {| F(n-t+is) |^2}} \, ds dt 
}
Since $ \| F(\cdot \pm iq)\|_{L^2(\R)}  =  \| e^{\pm 2\pi q \xi} \hat{f}(\xi) \|_{L^2(\R)}$, and 
\EQ{ \nn 
& \sum_n \int_0^1 \int_{-q}^q { {| F(n-t+is) |^2}} \, ds dt   = \int_\R \int_{-q}^q { {|F(x+is)|^2}} \, ds dx  \\
&= { {\int_{-q}^q \int_\R  e^{4\pi s\xi}  |\hat{f}(\xi)|^2   \, d\xi ds \le  2q \| e^{2\pi q|\xi|} \hat{f}(\xi)\|_{L^2(\R)}^2 }}
}
Assuming as we may that $q^*\le \frac12$ we infer from \eqref{eq:erw} that 
\[
\sum_n  \Erw \|F\|_{L^{2}(\del \Rect_{n,t}(q))}^2 \le 12 \| e^{2\pi q|\xi|} \hat{f}(\xi)\|_{L^2(\R)}^2
\]
Inserting this into \eqref{eq:Erw} concludes the proof. 
\end{proof}

\section{$L^2$ localization in higher dimensions}\label{sec:locdim}

Our goal is to prove a version of Proposition~\ref{prop:loc} for band-limited functions $f\in  L^2(\R^d)$, $d\ge2$. For the sake of simplicity, we first limit ourselves to $d=2$ and begin with a Cartan-type estimate for functions on $\D\times \D$ which are subharmonic relative to each variable. 

We begin with the definition of a Cartan-$2$ set, cf.~\cite[Definition 8.1]{GS1} and~\cite[Definition 2.12]{GS2}. 

\begin{definition}\label{def:Cart2}
We say that $\calB\subset\C^2$ is a Cartan-$2$ set with parameter $H>0$ if for all $(z_1,z_2)\in \calB$ one has either
\begin{itemize}
\item $z_1\in \bigcup_j D(\zeta_j, s_j) $ with $\sum_j s_j\le 5H$  \text{\ \ or for all other\ \ }$z_1$ one has 
\item $z_2 \in \bigcup_k D(w_t, t_k) $ with $\sum_k t_k\le 5H$ and $(w_k,t_k)$ depend on $z_1$. 
\end{itemize} 
\end{definition} 

Of particular relevance to us with be the fact that a Cartan-$2$ set has a real ``trace" of small measure.  

\begin{lemma}
\label{lem:smalltrace}
Let $\calB\subset \prod_{j=1}^2 D(z_{j,0},1)$ be a Cartan-$2$ set with parameter $H>0$. Then 
\[
| \calB\cap \R^2| \le 40 H
\]
\end{lemma}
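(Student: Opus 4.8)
The plan is to use Fubini together with the one-variable Cartan bound on linear measure of a union of disks. Recall that for a finite collection of disks $D(\zeta_j, s_j)$ with $\sum_j s_j \le 5H$, the real trace $\R \cap \bigcup_j D(\zeta_j, s_j)$ has one-dimensional Lebesgue measure at most $\sum_j 2 s_j \le 10 H$, since each disk meets $\R$ in a segment of length at most $2s_j$. This elementary observation is the workhorse.

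First I would split $\calB \cap \R^2$ according to the dichotomy in Definition~\ref{def:Cart2}. Write $\calB \cap \R^2 = A_1 \cup A_2$, where $A_1$ consists of those $(x_1,x_2)$ for which $x_1 \in \bigcup_j D(\zeta_j,s_j)$ (the ``bad first coordinate'' set, with $\sum_j s_j \le 5H$), and $A_2$ is the complement inside $\calB \cap \R^2$, so that for every $(x_1,x_2)\in A_2$ one has $x_2 \in \bigcup_k D(w_k(x_1), t_k(x_1))$ with $\sum_k t_k(x_1) \le 5H$. For $A_1$: since $x_1$ ranges over a subset of $\R \cap \bigcup_j D(\zeta_j,s_j)$, which has length $\le 10H$, and $x_2$ ranges over $\R \cap D(z_{2,0},1)$, which has length $\le 2$, we get $|A_1| \le 10H \cdot 2 = 20H$. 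For $A_2$: for each fixed admissible $x_1$, the slice $\{x_2 : (x_1,x_2)\in A_2\}$ lies in $\R \cap \bigcup_k D(w_k,t_k)$, of length $\le 10H$; since $x_1$ ranges over $\R \cap D(z_{1,0},1)$, of length $\le 2$, Fubini gives $|A_2| \le 2 \cdot 10H = 20H$. Adding, $|\calB\cap\R^2| \le 40H$.

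I do not anticipate a genuine obstacle here; the statement is essentially a bookkeeping consequence of the definition. The only point requiring a moment of care is measurability — one should note that $A_1$ is the intersection of $\calB\cap\R^2$ with the measurable rectangle-like set $\big(\R\cap\bigcup_j D(\zeta_j,s_j)\big)\times\big(\R\cap D(z_{2,0},1)\big)$, and that $\calB\cap\R^2$ is measurable (or simply bound the outer measure, which suffices for the stated inequality). With the sets arranged this way the two contributions $20H + 20H = 40H$ fall out immediately, matching the claimed constant and confirming it is the honest bound coming from the two factors of $10H$ (trace length) times $2$ (diameter of a unit disk's real trace).
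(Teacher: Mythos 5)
Your argument is correct and is exactly the paper's proof, merely written out in full: the paper's one-line justification (``Fubini and $|D(\zeta,s)\cap\R|\le 2s$'') unwinds into precisely your decomposition into the two cases of Definition~\ref{def:Cart2}, each contributing $10H\cdot 2 = 20H$.
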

\begin{proof}
Follows from Fubini and $|D(\zeta, s)\cap \R|\le 2s$ for all $\zeta\in\C$. 
\end{proof}

We can now formulate a Cartan-type bound for pluri-subharmonic functions.  

\begin{lemma}
\label{lem:Cart2}
Let $v: \ol{\D\times \D} \to [-\infty,\infty)$ be continuous so that $v=v(z_1,z_2)$ is separately subharmonic in each variable. Suppose for $0<\rho<r<1$ 
\EQ{
\label{eq:max2r}
\max_{|z_1|\le r,|z_2|\le r} \int_{\S^1\times \S^1} v(e(\theta_1), e(\theta_2)) \, P_{z_1}(d\theta_1) P_{z_2}(d\theta_2) \le M
}
and 
\EQ{
\label{eq:max2rho}
\max_{|z_1|\le \rho,|z_2|\le \rho }  v(z_1, z_2) \ge m
}
Let $\rho=r(1-3\delta)$ with $0<\delta<\frac13$. Then for any $0<H\le 1$ one has 
\EQ{\label{eq:vCart2} 
v(z_1,z_2) &\ge m  - (M-m)  (L+1)^2 \\
L& :=  2  \delta^{-3}\log(2/\delta) +  \delta^{-2} \log(2e/H)   
}
for all $(z_1,z_2)\in r_1\D\times r_1\D\setminus \calB$ where $\calB$ is  a Cartan-$2$ set with 
parameter~$rH$, and $r_1= r(1-2\delta)$. 
\end{lemma}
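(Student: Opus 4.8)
The plan is to iterate the one‑variable Cartan/Riesz estimate of Corollary~\ref{cor:vRiesz} in each of the two variables, using the separate subharmonicity hypothesis. First I would reduce to a pointwise upper bound: since $v$ is only controlled through the Poisson‑average condition \eqref{eq:max2r}, I replace $v$ by its harmonic majorant in each variable and work, as in the proof of Corollary~\ref{cor:muehe}, with the majorant $h_0(z_1,z_2)$ obtained by taking the double Poisson integral of the boundary values; by \eqref{eq:max2r} this majorant is $\le M$ on $r\D\times r\D$, and $v\le h_0$ there. So I may assume $v\le M$ on $r\D\times r\D$ at the cost of nothing.

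\smallskip
Next, fix $z_2$ with $|z_2|\le \rho$ realizing (approximately) the lower bound \eqref{eq:max2rho}, i.e.\ pick $z_2^{(0)}$ with $v(\cdot, z_2^{(0)})$ having a point in $\rho\D$ where it is $\ge m$. Apply Corollary~\ref{cor:vRiesz} to the subharmonic function $z_1\mapsto v(z_1, z_2^{(0)})$ on $r\D$ (after rescaling $r\D$ to $\D$, which is why $\rho=r(1-3\delta)$, $r_1=r(1-2\delta)$ and the Cartan radii get multiplied by $r$): outside a union of disks in $z_1$ of total radius $\le 5rH$ one has
\[
v(z_1,z_2^{(0)}) \ge m - (M-m)\,L,\qquad L:=2\delta^{-3}\log(2/\delta)+\delta^{-2}\log(2e/H).
\]
Now \emph{sweep} in the first variable: for \emph{every} $z_1\in r_1\D$ which is not in that exceptional set, the function $z_2\mapsto v(z_1,z_2)$ is subharmonic on $r\D$, is $\le M$, and attains a value $\ge m-(M-m)L$ somewhere in $\rho\D$ (namely at $z_2^{(0)}$). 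Apply Corollary~\ref{cor:vRiesz} again, this time in $z_2$, with the new lower bound $m':=m-(M-m)L$ in place of $m$: outside a union of disks in $z_2$ (depending on $z_1$) of total radius $\le 5rH$,
\[
v(z_1,z_2)\ge m' - (M-m')\,L \ge m-(M-m)L - (M-m)(1+L)L = m-(M-m)\big((L+1)^2-1\big),
\]
using $M-m'\le (M-m)(1+L)$. The set of bad $(z_1,z_2)$ is precisely a Cartan‑$2$ set with parameter $rH$ by Definition~\ref{def:Cart2}: either $z_1$ lies in the first exceptional family (total radius $\le 5rH$), or, for the remaining $z_1$, $z_2$ lies in a family of disks depending on $z_1$ of total radius $\le 5rH$. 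Since $m-(M-m)((L+1)^2-1)=m-(M-m)((L+1)^2-1)$ and we may absorb the harmless $+1$, this gives exactly \eqref{eq:vCart2}.

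\smallskip
The one point requiring a little care — and the main obstacle — is the bookkeeping of the upper bound across the second application: when I re‑apply Corollary~\ref{cor:vRiesz} in $z_2$ I still need a genuine pointwise upper bound $M$ on $z_2\mapsto v(z_1,z_2)$ valid for the \emph{fixed} $z_1$ under consideration, and I need it uniformly over all the good $z_1$. This is why the reduction in the first paragraph must be to a majorant that is already pointwise $\le M$ on the full polydisk $r\D\times r\D$ (not just in an averaged sense), so that restricting to a slice $z_1=\text{const}$ keeps the bound $M$; the double Poisson integral does exactly this, and separate subharmonicity guarantees $v\le h_0$ at every point of $r\D\times r\D$. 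The remaining inequality $M-m'\le(M-m)(1+L)$ is immediate once $L\ge 0$, and the total‑radius estimates $5rH$ come directly from the rescaling of the $5H$ in Corollary~\ref{cor:vRiesz}. Everything else is the routine composition of two invocations of the one‑dimensional result.
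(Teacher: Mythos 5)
Your proof is correct and follows essentially the same route as the paper: reduce to a pointwise upper bound via the iterated Poisson majorant, apply Corollary~\ref{cor:vRiesz} once in $z_1$ and once in $z_2$, and track the degradation $M-m'\le(M-m)(1+L)$, which yields $m-(M-m)\big((L+1)^2-1\big)\ge m-(M-m)(L+1)^2$ off a Cartan-$2$ set with parameter $rH$. The only difference is that the paper's first application is to the subharmonic function $\tilde v(z_1)=\max_{|z_2|\le\rho}v(z_1,z_2)$ rather than to the slice $v(\cdot,z_2^{(0)})$ at a fixed $z_2^{(0)}$; your variant works equally well (and sidesteps verifying that the maximum of a family of subharmonic functions is subharmonic and continuous).
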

\begin{proof}
The function 
\EQ{\label{eq:hz1z2}
h(z_1,z_2):=\int_{\S^1\times \S^1} v(e(\theta_1), e(\theta_2)) \, P_{z_1}(d\theta_1) P_{z_2}(d\theta_2)
}
is separately harmonic in each variable, is continuous up to $\partial(\D\times \D)$, and satisfies $v\le h$ pointwise. 
This latter property follows from the pointwise inequalities 
\EQ{\nn
v(z_1,z_2) & \le \int_{  \S^1} v(z_1, e(\theta_2)) \,  P_{z_2}(d\theta_2)  
}
which holds due to harmonicity of the right-hand side in $z_2$, whence 
\EQ{
\label{eq:vH}
v(z_1,z_2) & \le \int_{\S^1 } v(e(\theta_1), z_2) \, P_{z_1}(d\theta_1) \\
&\le \int_{\S^1\times \S^1} v(e(\theta_1), e(\theta_2)) \, P_{z_1}(d\theta_1) P_{z_2}(d\theta_2) =h(z_1,z_2)
}
as claimed. 
Define 
\EQ{\label{eq:tilv}
\tilde v(z_1) := \max_{|z_2|\le \rho} \; v(z_1, z_2)  
}
Then $\tilde v$ is continuous (by uniform continuity), and subharmonic (as the supremum  of a family of subharmonic functions).  
It satisfies $ \tilde v(z_1)\le M$ for all $|z_1|\le r$ by~\eqref{eq:max2r} and~\eqref{eq:vH}, and $\max_{|z_1|\le\rho} \tilde v(z_1)\ge m$. The latter follows from 
\[
v(z_1, z_2)\le \tilde v(z_1) \qquad\forall\; |z_1|\le r, \; |z_2|\le \rho 
\]
and \eqref{eq:max2rho}. 

We apply Corollary~\ref{cor:vRiesz} to $\tilde v$, which requires rescaling from $\D$ to $r\D$. Thus, with $\rho = r(1-3\delta)$, and $r_1= r(1-2\delta)$, 
 \EQ{\label{eq:good z1}
\tilde v(z_1) &\ge m  - (M-m) L =: m^* 
 }
 for all $z_1\in r_{1}\D \setminus  \bigcup_{j} D(\zeta_{j},s_{j}) $  with $\sum_{j} s_{j} \le 5 rH $.  Fix  such a  {\em good} $z_1$. By definition, there exists $z_2^*$ with $|z_2^*|\le \rho$ and  
 $v(z_1, z_2^*)\ge m^*$. On the other hand, $v(z_1, z_2)\le M$ for all $|z_2|\le r$. 

Once again, by Corollary~\ref{cor:vRiesz} rescaled from $\D$ to $r\D$, it follows that 
 \EQ{\label{eq:good z1z2}
  v(z_1,z_2) &\ge m^*  - (M-m^*) L  \\
&\ge m  - (M-m) L (2+L)
 }
 for all $z_2\in r_{1}\D \setminus  \bigcup_{j} D(w_{j},t_{j}) $  with $\sum_{j} t_{j} \le 5 rH $. These disks depend on $z_1$. 
 \end{proof}

By means of Lemma~\ref{lem:Cart2} we establish a two-dimensional analogue of Proposition~\ref{prop:B1B2}.  

\begin{prop}\label{prop:B1B2 2dim}
There exists an absolute constant $C_{1}>0$ with the following property: 
Let $F$ be an analytic function of two variables on a neighborhood of $\Rect(q)\times \Rect(q)$ with $0< q\leq q^*$, and $F$ not identically equal to zero.     Denote 
$$ B_{1}:=\|F\|_{L^{2}([1/4,3/4]\times[1/4,3/4] )} ,\qquad   B_{2}:=\|F\|_{L^{2}(\del \Rect(q)\times \del \Rect(q))} .$$  For any $J\subset [1/4,3/4]\times [1/4,3/4]$  some Borel set of positive measure, 
\[
B_{1}\le e^{\frac{C}{q}}     B_{2}^{1-\kappa} \|F\|_{L^{2}(J)}^{\kappa}
\]
with $0<\kappa\le  e^{-\frac{C}{q}} \big( \log(1/|J|)\big)^{ -2}$ with some absolute constant $C$. 
\end{prop}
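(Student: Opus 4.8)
The plan is to mimic the one-dimensional proof of Proposition~\ref{prop:B1B2}, replacing the scalar Cartan estimate (Corollary~\ref{cor:vRiesz}) by its two-dimensional counterpart (Lemma~\ref{lem:Cart2}), and then integrating over a good set of real base points. First I would pass to $u(z_1,z_2):=\log|F(z_1,z_2)|$, which is separately subharmonic in each variable. As in Corollary~\ref{cor:muehe}, we do not have a pointwise upper bound on $u$, so we work instead with $v=u\circ(\Phi_q\times\Phi_q)$ on $\D\times\D$ and bound the doubly-harmonic majorant $h$ from \eqref{eq:hz1z2} using the Poisson-Jensen inequality twice (once in each variable): applying the estimate \eqref{eq:h0w2}--\eqref{eq:Mnew} iteratively yields $M\le \log(B_2)+C_0q^{-1}$ with the same $B_2=\|F\|_{L^2(\partial\Rect(q)\times\partial\Rect(q))}$, where I would be slightly careful that the product Poisson kernel and the product change-of-variables Jacobian $d\sigma\otimes d\sigma$ give the right power of $B_2$ and an extra absolute constant from $\|d\theta/d\sigma\|_{L^2}^2$ squared. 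For the lower bound, $m\ge\log B_1$ as before, so the conclusion of Lemma~\ref{lem:Cart2} (after transferring through $\Phi_q\times\Phi_q$ exactly as Corollary~\ref{cor:RRiesz} is deduced from Corollary~\ref{cor:vRiesz}) gives, for all $(x_1,x_2)\in[1/4,3/4]^2$ outside a Cartan-$2$ set $\calB$ of parameter $H\exp(3\pi/(4q))$-type,
\EQ{\nn
\log|F(x_1,x_2)| \ge \log B_1 - (\log(B_2/B_1)+C_0 q^{-1})\,\exp\big(\tfrac{9\pi}{4q}\big)\big(L+1\big)^2,
}
where $L$ is the one-variable Cartan exponent with parameter $H$; exponentiating produces the analogue of \eqref{eq:|F|}, now with $K$ replaced by something of size $e^{C/q}(\log(1/|J|))^2$ because $(L+1)^2$ is squared.

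Next I would run the integration argument: choose $H$ so that the Cartan-$2$ set $\calB$ has real trace of measure $\le\frac12|J|$, which is possible by Lemma~\ref{lem:smalltrace} since $|\calB\cap\R^2|\le 40H'$ for the relevant parameter $H'$; this forces $H$ to be roughly $|J|\exp(-3\pi/(2q))$ up to constants, contributing only a $|J|$-power that is absorbed exactly as in \eqref{eq:prop62}--\eqref{eq:prop63}. Then
\EQ{\nn
B_1^{K+1}(|J|/2)^{1/2} \le e^{C_0 K/q}\,B_2^{K}\,\|F\|_{L^2(J)},
}
with $K \le e^{C_2/q}(\log(1/|J|))^2 - 1$; the bound $\log((|J|/2)^{-\kappa/2})<0.1$ holds uniformly just as in \eqref{eq:prop63} since dividing by $K$ (which now carries a $(\log(1/|J|))^2$ factor) only helps. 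Setting $C:=\max(2C_0,C_2)$ and $\kappa\le (1+K)^{-1}\le e^{-C/q}(\log(1/|J|))^{-2}$ yields the claimed inequality.

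The main obstacle, and the only genuinely new point, is the correct handling of the double Poisson-Jensen majorization to obtain the upper bound $M$ in terms of $B_2 = \|F\|_{L^2(\partial\Rect(q)\times\partial\Rect(q))}$: one must iterate the inequality $h_0(w)\le\log(B_2)+C_0 q^{-1}$ by first fixing $z_1$ on the real boundary and estimating in $z_2$, then averaging in $z_1$, and check that the Jensen step $\log\Erw(\cdot)\ge\Erw\log(\cdot)$ together with two applications of \eqref{eq:bloed} still closes with an absolute constant rather than something blowing up in $q$. Everything else is a mechanical, term-by-term transcription of Section~2 with squares inserted where the two-dimensional Cartan exponent $(L+1)^2$ appears, which is why the final $\kappa$ carries $(\log(1/|J|))^{-2}$ instead of $(\log(1/|J|))^{-1}$.
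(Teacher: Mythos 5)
Your proposal is correct and follows essentially the same route as the paper: pull $\log|F|$ back to the polydisk via $\Phi_q\times\Phi_q$, bound the separately harmonic majorant by a double Poisson--Jensen argument to get $M\le\log B_2+C_0q^{-1}$, apply the Cartan-$2$ lemma (Lemma~\ref{lem:Cart2}), control the real trace of the exceptional set by Lemma~\ref{lem:smalltrace} with $H$ chosen proportional to $|J|$, and integrate over $J$. The bookkeeping of constants differs slightly from the paper's parametrization (which uses $\delta=\exp(-2A/q)$), but the structure and the source of the $(\log(1/|J|))^{-2}$ in $\kappa$ are identical.
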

\begin{proof}
Set $u(z_1,z_2):=\log|F(z_1,z_2)|$, which is pluri-subharmonic on a neighborhood of $\Rect(q)\times\Rect(q) $. We pull $u$ back to the polydisk $\D\times\D$, and define 
 $$v(z_1,z_2)=u(\Phi_{q}(z_1), \Phi_{q}(z_2))= \log|\tilde F(z_1,z_2)|,\qquad  \tilde F(z_1,z_2) = F(\Phi_{q}(z_1), \Phi_{q}(z_2)) .$$  
 With $h$ defined as in \eqref{eq:hz1z2}, for all $|z_1|,|z_2|\le r$, 
 \EQ{\label{eq:h2dim}
h(z_1,z_2) &= \int_0^1 \int_0^1  v(e(\theta_1),e(\theta_2) ) \, P_{z_1}(d\theta_1) P_{z_2}(d\theta_2) \\
&=  \int_0^1 \int_0^1  \log| \tilde F(e(\theta_1),e(\theta_2) ) |\, P_{z_1}(d\theta_1) P_{z_2}(d\theta_2) \\
&\le  \log\Big (  \int_0^1 \int_0^1  | \tilde F(e(\theta_1),e(\theta_2) ) |\, P_{z_1}(d\theta_1) P_{z_2}(d\theta_2)   \Big)  \\
&\le  \log\Big ( \int_0^1 \int_0^1  | \tilde F(e(\theta_1),e(\theta_2) ) |\, d\theta_1 d\theta_2 \; \big\| \frac{P_{z_1}(d\theta) }{d\theta}\big\|_{\infty} \big\| \frac{P_{z_2}(d\theta) }{d\theta}\big\|_{\infty} \Big) \\
&\le  \log  ( B_{2} ) + 2\log\big( \| \frac{d\theta}{d\sigma}\|_{L^{2}(\del\Rect(q))}     \big) + 2\sup_{|w|\le r}\log\big\| \frac{P_{w}(d\theta) }{d\theta}\big\|_{\infty} \\
& \le \log  ( B_{2} ) + \log\big(Cq^{-1}) +  2\log(2/(1-r))
}
where $d\sigma$ denotes arc length measure on $\del\Rect(q)$, see~\eqref{eq:Mnew}.  By   Lemma~\ref{lem:Phir}, we can apply Lemma~\ref{lem:Cart2} to $v$ with $\rho=1-\exp(-A/q)$ with some absolute constant $A$, $$m=\log B_1,  \; M=\log  ( B_{2} ) +  3Aq^{-1}, \; \delta=\exp(-2A/q), \; r=\rho(1-3\delta)^{-1},$$ 
 and $0<q\le q^*\ll 1$.   
 Thus, for any $H>0$ there exists a Cartan-$2$ set $\calB$ with parameter $H$ such that 
for 
 \[r_1=1-\exp(-A/q)<r(1-2\delta),\]
 and any $(z_1,z_2)\in r_1\D\times r_1\D\setminus \calB$, we have
 \EQ{\nn 
 v(z_1, z_2)   &\ge m  - (M-m) (L+1)^2,
 }
 where
 \EQ{\nn
 L=2e^{\frac{6A}{q}}\log(2e^{\frac{2A}{q}})+&e^{\frac{4A}{q}}\log{(2e/H)}<e^{\frac{8A}{q}}+e^{\frac{4A}{q}}\log{(2e/H)}-1.
 }
 Returning to the original geometry, and analytic function $F$, we conclude the following via Lemmas~\ref{lem:Phir} and~\ref{lem:smalltrace}:  
 with $K:= (e^{\frac{8A}{q}} + e^{\frac{4A}{q}} \log(2e/H))^2$, 
 \EQ{\nn 
 B_1^{K+1} \le e^{\frac{3AK}{q}}  |F(x_1,x_2)|  B_2^{K}, 
}
 for all $(x_1, x_2)\in [1/4,3/4]\times[1/4,3/4]\setminus \calE$, where $\calE\subset\R^2$ and $|\calE|\le e^{\frac{5A}{q}} H$. 
 
 We now pick $H$ so that $e^{\frac{5A}{q}} H = \frac12|J|$, and integrate over $J$, we obtain
 \EQ{\nn
 B_1^{K+1} (|J|/2)^{\frac12}\leq e^{\frac{3AK}{q}} B_2^K \|F\|_{L^2(J)}
 }
 or
 \EQ{\label{eq:B1B2J}
 B_1\leq e^{\frac{3A}{q}} (|J|/2)^{-\frac{\kappa}{2}}B_2^{1-\kappa}\|F\|_{L^2(J)}^{\kappa},\qquad \kappa\leq (1+K)^{-1}.
 }
We write $\kappa\leq (1+K)^{-1}$ instead of $\kappa=(1+K)^{-1}$ since we could increase $K$.
One easily checks that $(|J|/2)^{-\frac{\kappa}{2}}\lesssim 1$, and 
\[
K\leq e^{\frac{C_1}{q}} (\log(1/|J|))^2-1,
\]
with some absolute constant $C_1$. Taking $C:=\max{(4A, C_1)}$, and 
\[K_0:=e^{\frac{C}{q}} (\log(1/|J|))^2.\]
We conclude from \eqref{eq:B1B2J} with the estimate $K\leq K_0-1$ that
\EQ{\nn
B_1\leq e^{\frac{C}{q}} B_2^{1-\kappa}\|F\|_{L^2(J)}^{\kappa},\qquad \kappa\leq K_0^{-1},
}
as claimed.
\end{proof}

In analogy with the one-dimensional case in Proposition~\ref{prop:loc}, we can deduce the following $L^2$ localization result.

\begin{prop}\label{prop:loc2dim}
Fix $\lambda\in (0,\frac12]$ and for each integers $n_1,n_2$  
let $$I_{n_1,n_2}\subset [n_1,n_1+1]\times [n_2,n_2+1]$$ be some Borel set with $|I_{n_1,n_2}|=\lambda$. 
Let $f\in L^2(\R^2)$ be band-limited, i.e., $\hat{f}$ is  of compact support. Then  for each $0<q\le q^*$ 
\EQ{
\label{eq:L2loc 2dim}
\|f\|_{L^2(\R^2)}^2 \le   e^{\frac{2C}{q}}  \Big( \sum_{(n_1,n_2)\in\Z^2}  \|f\|_{L^2(I_{n_1,n_2})}^2\Big)^\kappa   \| e^{2\pi q(|\xi_1|+|\xi_2|)} \hat{f}(\xi) \|_{L^2(\R^2)}^{2(1-\kappa)} 
}
with $0<\kappa\le  e^{-\frac{C}{q}} ( -\log \lambda)^{ -2}$, and $C$ some absolute constant. 
\end{prop}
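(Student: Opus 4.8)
The plan is to mimic the proof of Proposition~\ref{prop:loc} verbatim, replacing the one-dimensional Proposition~\ref{prop:B1B2} with its two-dimensional counterpart Proposition~\ref{prop:B1B2 2dim}. Let $F$ be the entire function of two variables extending $f$. Fix parameters $0\le t_1,t_2\le 1$ and for each $(n_1,n_2)\in\Z^2$ let $\Rect_{n_1,t_1}(q)\times\Rect_{n_2,t_2}(q)$ be the product of the two enlarged rectangles with vertices $n_j-1-t_j\pm iq$, $n_j+2+t_j\pm iq$. The first step is to rescale: setting $(n_1,n_2)=(0,0)$ without loss of generality, translating by $(1+t_1,1+t_2)$ and dilating each coordinate $z_j\mapsto z_j/(3+2t_j)$, the transformed set $I_{0,0}$ lands inside $[1/4,3/4]\times[1/4,3/4]$ and the height becomes $\ge q/5$ in each variable. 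Applying Proposition~\ref{prop:B1B2 2dim} then gives
\EQ{\label{eq:Rectnt2}
\|f\|_{L^{2}([n_1,n_1+1]\times[n_2,n_2+1])} \le e^{\frac{5C}{q}}  \|F\|_{L^{2}(\del\Rect_{n_1,t_1}(q)\times \del\Rect_{n_2,t_2}(q))}^{1-\kappa} \|f\|_{L^{2}(I_{n_1,n_2})}^{\kappa}
}
with $\kappa\le e^{-\frac{5C}{q}}(\log(1/|I_{n_1,n_2}|))^{-2}$; since $|I_{n_1,n_2}|=\lambda$ we may take a single $\kappa\le e^{-\frac{5C}{q}}(-\log\lambda)^{-2}$ uniformly in $(n_1,n_2)$ (absorbing the dilation factors $(3+2t_j)$ and the $(|J|/2)^{-\kappa/2}$ term into the constant, exactly as in the one-dimensional argument).

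Next I would square \eqref{eq:Rectnt2}, sum over $(n_1,n_2)\in\Z^2$, and apply H\"older's inequality in the form $\sum a_n^{1-\kappa}b_n^\kappa\le(\sum a_n)^{1-\kappa}(\sum b_n)^\kappa$ to obtain
\[
\|f\|_{L^2(\R^2)}^2 \le e^{\frac{10C}{q}}\Big(\sum_{n_1,n_2}\|F\|_{L^2(\del\Rect_{n_1,t_1}(q)\times\del\Rect_{n_2,t_2}(q))}^2\Big)^{1-\kappa}\Big(\sum_{n_1,n_2}\|f\|_{L^2(I_{n_1,n_2})}^2\Big)^{\kappa}.
\]
Then, taking the expectation $\Erw$ over $(t_1,t_2)\in[0,1]^2$ uniformly distributed and pulling it inside via concavity of $x\mapsto x^{1-\kappa}$, the task reduces to bounding $\sum_{n_1,n_2}\Erw\|F\|_{L^2(\del\Rect_{n_1,t_1}(q)\times\del\Rect_{n_2,t_2}(q))}^2$ by a constant times $\|e^{2\pi q(|\xi_1|+|\xi_2|)}\hat f(\xi)\|_{L^2}^2$.

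The boundary of the product rectangle $\del(\Rect_{n_1,t_1}\times\Rect_{n_2,t_2})$ decomposes into pieces where one coordinate runs along a horizontal edge $\R\pm iq$ (or a short vertical edge) while the other runs through the whole enlarged rectangle. Using the overlap bound $\sup_{0\le t\le1}\sum_n\one_{[n-1-t,n+2+t)}\le 5$ in each variable separately, together with Fubini, each term splits into a product of a one-dimensional horizontal contribution, giving a factor $\|F(\cdot\pm iq,\cdot)\|$-type norm, and a one-dimensional ``bulk'' contribution $\int_0^1\int_{-q}^q|F(\cdot, n-t+is)|^2\,ds\,dt$, which after summing and Fubini becomes $\int_{-q}^q\int_\R|F(\cdot,x+is)|^2\,dx\,ds$. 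Invoking the Paley--Wiener identity $\|F(\cdot,\cdot\pm iq)\|_{L^2(\R^2)}=\|e^{\pm2\pi q\xi_2}\hat f\|_{L^2}$ and $\int_{-q}^q\int_\R|F(x_1,x_2+is)|^2\,dx_1\,dx_2\,ds=\int_{-q}^q\int_{\R^2}e^{4\pi s\xi_2}|\hat f|^2\,d\xi\,ds\le 2q\|e^{2\pi q|\xi_2|}\hat f\|_{L^2}^2$ in the appropriate coordinate, and then doing the analogous estimate in the other coordinate, all four cross-terms are dominated by $C\|e^{2\pi q(|\xi_1|+|\xi_2|)}\hat f\|_{L^2}^2$ for an absolute $C$. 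Inserting this into the expectation inequality and relabeling the absolute constant (which stays of the form $e^{2C/q}$ after combining $e^{10C/q}$ with the various $O(1)$ factors and adjusting $C$) gives \eqref{eq:L2loc 2dim}.

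The main obstacle I anticipate is purely bookkeeping: the boundary $\del(\Rect(q)\times\Rect(q))$ in two dimensions is more intricate than $\del\Rect(q)$, so one must carefully enumerate the edge$\times$(full rectangle) pieces and check that each contributes either a genuine Paley--Wiener boundary term or a thin-strip term that the $2q$-factor kills, with no leftover pieces; the overlap bound \eqref{eq:onele5} must be applied in the correct variable for each piece. None of this is deep, but it requires care to get the constants (and the factor-of-$12$-type numerology) right. Everything else — the rescaling, the H\"older step, the expectation trick, the Paley--Wiener identities — is a direct transcription of the proof of Proposition~\ref{prop:loc}.
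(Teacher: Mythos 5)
Your proposal follows the paper's proof essentially verbatim: entire extension, rescaled product rectangles, the two-dimensional Proposition~\ref{prop:B1B2 2dim}, H\"older, averaging over $(t_1,t_2)$, and Paley--Wiener bounds for the boundary contributions. The only imprecision is that the relevant set is the product of boundaries $\del\Rect_{n_1,t_1}(q)\times\del\Rect_{n_2,t_2}(q)$ (sixteen edge-by-edge pieces, grouped in the paper as horizontal--horizontal, mixed, and vertical--vertical) rather than $\del(\Rect\times\Rect)$, but your edge-by-edge estimates are exactly the right ones and the argument goes through.
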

\begin{proof}
Let $F$ be the entire function with $F=f$ on $\R^2$. 
Fix $0\leq t_1, t_2\leq 1$ and for $j=1,2$ denote $\Rect_{n,t_j}(q)$ be the rectangle with vertices $n-1-t_j\pm iq$, $n+2+t_j\pm iq$.
We obtain from Proposition \ref{prop:B1B2 2dim} that for any $n_1, n_2\in \Z$:
\EQ{\nn
\|f\|_{L^2([n_1,n_1+1]\times [n_2,n_2+1])}\leq e^{\frac{5C}{q}} \|F\|^{1-\kappa}_{L^2(\partial \Rect_{n_1,t_1}(q) \times \partial \Rect_{n_2,t_2}(q))} 
\|f\|^{\kappa}_{L^2(I_{n_1,n_2})},
}
with $\kappa\leq e^{-\frac{5C}{q}}(\log((3+2t_1)(3+2t_2)/|I_{n_1,n_2}|))^{-2}$, and $C$ being the absolute constant in Proposition \ref{prop:B1B2 2dim}.
Squaring, summing, and applying H\"older's inequality, we have
\EQ{\nn
\|f\|_{L^2(\R^2)}^2 &\leq e^{\frac{10C}{q}} \Big(\sum_{(n_1,n_2)\in \Z^2} \|F\|^{2}_{L^2(\partial \Rect_{n_1,t_1}(q) \times \partial \Rect_{n_2,t_2}(q))} \Big)^{1-\kappa}\\
&\qquad
\Big(\sum_{(n_1,n_2)\in \Z^2} \|f\|_{L^2(I_{n_1,n_2})}^2\Big)^{\kappa}.
}
Taking expectation of the previous line with respect to $0\leq t_1,t_2\leq 1$, we obtain
\EQ{\label{eq:Erw'}
&\|f\|_{L^2(\R^2)}^2\\
\leq &e^{\frac{10C}{q}} \Big(\sum_{(n_1,n_2)\in \Z^2}  \Erw_{t_1} \Erw_{t_2} \|F\|^{2}_{L^2(\partial \Rect_{n_1,t_1}(q) \times \partial \Rect_{n_2,t_2}(q))}\Big)^{1-\kappa}
\Big(\sum_{(n_1,n_2)\in \Z^2} \|f\|_{L^2(I_{n_1,n_2})}^2\Big)^{\kappa}.
}
By decomposing each $\partial\Rect_{n,t}(q)$ into its four sides, we decompose 
\EQ{\label{eq:Erwtotal}
\sum_{(n_1,n_2)\in \Z^2}  \Erw_{t_1} \Erw_{t_2} \|F\|^{2}_{L^2(\partial \Rect_{n_1,t_1}(q) \times \partial \Rect_{n_2,t_2}(q))}
}
into the following three parts:
\subsubsection*{Part 1. Vertical and Horizontal mixed terms}
This part contains eight terms, each can be bounded in the same way.
Taking the left vertical side of $\Rect_{n_1,t_1}(q)$ and upper horizontal side of $\Rect_{n_2,t_2}(q)$ for example, we have
\EQ{\nn
&\sum_{(n_1,n_2)\in \Z^2}  \Erw_{t_2} \int_{\R}  \one_{[n_2-1-t_2,n_2+2+t_2)} \Erw_{t_1}  \int_{-q}^q  |F(n_1-1-t_1+is, x_2+iq)|^2 \, ds dx_2\\
\leq &5\sum_{n_1\in \Z} \Erw_{t_1} \int_{\R}\int_{-q}^q  |F(n_1-1-t_1+is, x_2+iq)|^2 \, ds dx_2\\
=& 5\int_{-q}^q \int_{\R^2} |F(x_1+is, x_2+iq)|^2 \, dx_1 dx_2 ds\\
\leq &5\int_{-q}^q \int_{\R^2} e^{4\pi (s\xi_1+q\xi_2)} |\hat{f}(\xi_1, \xi_2)|^2\, d\xi_1 d\xi_2 ds\\
\leq &10q \| e^{2\pi q(|\xi_1|+|\xi_2|)} \hat{f}(\xi) \|_{L^2(\R^2)}^2,
}
in which we used \eqref{eq:onele5} in the first step.
Hence, part 1 contributes in total at most
\EQ{\label{eq:Erwvh}
80q\| e^{2\pi q(|\xi_1|+|\xi_2|)} \hat{f}(\xi) \|_{L^2(\R^2)}^2.
}

\subsubsection*{Part 2. Vertical+Vertical sides}
This part contains four terms. Taking the left vertical sides of $\Rect_{n_1,t_1}(q)$ and $\Rect_{n_2,t_2}(q)$ for example, we have
\EQ{\nn
&\sum_{(n_1,n_2)\in \Z^2}  \Erw_{t_1} \Erw_{t_2} \int_{-q}^q \int_{-q}^q  |F(n_1-1-t_1+is_1, n_2-1-t_2+is_2)|^2 \, ds_1 ds_2\\
=&\int_{-q}^q \int_{-q}^q \int_{\R^2}  |F(x_1+is_1, x_2+is_2)|^2 \, dx_1 dx_2 ds_1 ds_2\\
\leq &4q^2  \| e^{2\pi q(|\xi_1|+|\xi_2|)} \hat{f}(\xi) \|_{L^2(\R^2)}^2.
}
Hence, part 2 contributes in total at most
\EQ{\label{eq:Erwvv}
16 q^2 \| e^{2\pi q(|\xi_1|+|\xi_2|)} \hat{f}(\xi) \|_{L^2(\R^2)}^2.
}
\subsubsection*{Part 3. Horizontal+Horizontal sides}
This part also contains four terms. Taking the upper horizontal sides of $\Rect_{n_1,t_1}(q)$ and $\Rect_{n_2,t_2}(q)$ for example, we have
\EQ{\nn
\sum_{(n_1,n_2)\in \Z^2} &\!\!\! \Erw_{t_1} \Erw_{t_2} \int_{\R^2} \one_{[n_1-1-t_1, n_1+2+t_1)} \one_{[n_2-1-t_2, n_2+2+t_2)} |F(x_1+iq, x_2+iq)|^2 dx_1 dx_2\\
\leq &25 \int_{\R^2} |F(x_1+iq, x_2+iq)|^2 dx_1 dx_2\\
\leq &25 \| e^{2\pi q(|\xi_1|+|\xi_2|)} \hat{f}(\xi) \|_{L^2(\R^2)}^2,
}
in which we used \eqref{eq:onele5} in the first step.
Hence, the contribution of part (3) is at most
\EQ{\label{eq:Erwhh}
100 \| e^{2\pi q(|\xi_1|+|\xi_2|)} \hat{f}(\xi) \|_{L^2(\R^2)}^2.
}
Plugging the estimates in \eqref{eq:Erwvh}, \eqref{eq:Erwvv} and \eqref{eq:Erwhh} into \eqref{eq:Erwtotal}, we obtain
\EQ{\label{eq:Erwtotal'}
\sum_{(n_1,n_2)\in \Z^2}  \Erw_{t_1} \Erw_{t_2} \|F\|^{2}_{L^2(\partial \Rect_{n_1,t_1}(q) \times \partial \Rect_{n_2,t_2}(q))}
\leq &(4q+10)^2 \| e^{2\pi q(|\xi_1|+|\xi_2|)} \hat{f}(\xi) \|_{L^2(\R^2)}^2\\
\leq 144 \| e^{2\pi q(|\xi_1|+|\xi_2|)} \hat{f}(\xi) \|_{L^2(\R^2)}^2,
}
for $q\leq 1/2$.
Plugging \eqref{eq:Erwtotal'} into \eqref{eq:Erw'} yields
\EQ{\nn
\|f\|_{L^2(\R^2)}^2\leq 144 e^{\frac{10C}{q}}\Big(\sum_{(n_1,n_2)\in \Z^2} \|f\|_{L^2(I_{n_1,n_2})}^2\Big)^{\kappa}  \|e^{2\pi q(|\xi_1|+|\xi_2|)} \hat{f}(\xi) \|_{L^2(\R^2)}^{2(1-\kappa)},
}
as claimed.
\end{proof}

In general dimensions one can proceed similarly. First, we inductively define Cartan sets in higher dimensions. 

\begin{definition}\label{def:Cart2d}
We say that $\calB\subset\C^2$ is a Cartan-$d$ set with parameter $H>0$ if for all $(z_1,z_2,\ldots, z_d)\in \calB$ one has either
\begin{itemize}
\item $z_1\in \bigcup_j D(\zeta_j, s_j) $ with $\sum_j s_j\le 5H$  \text{\ \ or for all other\ \ }$z_1$ one has 
\item $(z_2,\ldots, z_d)$ belongs to a Cartan-$(d-1)$ set with parameter $H>0$ depending on $z_1$. 
\end{itemize} 
\end{definition} 

By arguments analogous to those used above for $d=2$, one can exploit these Cartan sets in higher dimensions 
 to obtain the following result. We leave the details to the reader. Throughout,  we let $C(d)\geq 1$ be a constant depending only on the dimension $d$. It is allowed to change its values from line to line.

\begin{prop}\label{prop:locddim}
Fix $\lambda\in (0,\frac12]$ and for each integer vector $n=(n_1,\ldots,n_d)\in\Z^d$, $d\ge2$,   
let $$I_{n}\subset \prod_{j=1}^d [n_j,n_j+1) $$ be some Borel set with $|I_{n}|=\lambda$. 
Let $f\in L^2(\R^d)$ be band-limited, i.e., $\hat{f}$ is  of compact support. Then  for each $0<q\le q^*=q^*(d)\ll 1$ 
\EQ{
\label{eq:L2loc ddim}
\|f\|_{L^2(\R^d)}^2 \le   e^{\frac{2C(d)}{q}}  \Big( \sum_{n\in\Z^d}  \|f\|_{L^2(I_{n})}^2\Big)^\kappa   \| e^{2\pi q |\xi|_1} \hat{f}(\xi) \|_{L^2(\R^d)}^{2(1-\kappa)} 
}
with $0<\kappa\le  e^{-\frac{C(d)}{q}} ( -\log \lambda)^{ -d}$,   $C(d)\geq 1$ some absolute constant depending on $d$, and $|\xi|_1=\sum_j|\xi_j|$. 
\end{prop}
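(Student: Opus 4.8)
The plan is to mirror exactly the proof of Proposition~\ref{prop:loc2dim}, using the inductively defined Cartan-$d$ sets in place of Cartan-$2$ sets, and to use the following induction scheme on dimension. First I would establish the analogue of Lemma~\ref{lem:smalltrace}: a Cartan-$d$ set $\calB\subset\prod_{j=1}^d D(z_{j,0},1)$ has real trace satisfying $|\calB\cap\R^d|\le C(d)H$, which follows by induction on $d$ from Fubini (slice in the first coordinate: outside a set of $z_1$ of measure $\le 10H$ one has a Cartan-$(d-1)$ slice of trace $\le C(d-1)H$, and on the remaining $z_1$ one uses $|D(\zeta,s)\cap\R|\le 2s$). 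Then I would prove the analogue of Lemma~\ref{lem:Cart2}: for $v$ separately subharmonic in each of $d$ variables on $\ol{\D^d}$, with $h$ the $d$-fold Poisson integral dominating $v$ pointwise (iterating the one-variable sub-mean-value inequality), applying Corollary~\ref{cor:vRiesz} in the first variable after maximizing over the remaining $\rho\D^{d-1}$, and then applying the $(d-1)$-dimensional version to the maximizing slice, one gets $v\ge m-(M-m)(L+1)^d$ off a Cartan-$d$ set with parameter $rH$, where $L=2\delta^{-3}\log(2/\delta)+\delta^{-2}\log(2e/H)$. The exponent $(L+1)^d$ is the key structural point: each dimension multiplies the number of ``bad layers'' by a factor $L+1$.

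Next I would run the analogue of Proposition~\ref{prop:B1B2 2dim}: set $u=\log|F|$ pluri-subharmonic on a neighborhood of $\Rect(q)^d$, pull back via $\Phi_q$ in each variable to get $v$ on $\D^d$, bound the $d$-fold Poisson integral $h$ by $\log(B_2)+dC_0q^{-1}$ using the same Jensen/Poisson-kernel estimate as in \eqref{eq:Mnew} (now with a factor $d$ from the $d$-fold product of kernels and arc-length Jacobians), apply the $d$-dimensional Cartan lemma with $\rho=1-e^{-A/q}$, $\delta=e^{-2A/q}$, transfer back via Lemma~\ref{lem:Phir} and the $d$-dimensional trace bound, integrate over $J$, and conclude $B_1\le e^{C(d)/q}B_2^{1-\kappa}\|F\|_{L^2(J)}^{\kappa}$ with $\kappa\le e^{-C(d)/q}(\log(1/|J|))^{-d}$; here $K=(e^{8A/q}+e^{4A/q}\log(2e/H))^d$ so the power of $\log(1/|J|)$ is $d$ rather than $2$. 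Finally, the localization step is identical in structure to Proposition~\ref{prop:loc2dim}: write $F$ for the entire extension of $f$, introduce the random shifts $t_1,\dots,t_d\in[0,1]$, apply the $d$-dimensional $B_1$--$B_2$ estimate on each product cube $\prod_j[n_j,n_j+1)$ with Fourier-side rectangles of height $q$, square, sum over $n\in\Z^d$, apply H\"older, take expectations, and bound $\sum_n \Erw\|F\|^2_{L^2(\prod_j\partial\Rect_{n_j,t_j}(q))}$ by decomposing $\partial\Rect(q)$ into its four sides per coordinate; each of the $4^d$ resulting product terms is controlled by $\|e^{2\pi q|\xi|_1}\hat f\|_{L^2}^2$ times a constant depending only on $d$ (using \eqref{eq:onele5} on horizontal sides and a thin-strip estimate $\int_{-q}^q e^{4\pi s\xi}\,ds\le 2q\,e^{2\pi q|\xi|}$ on vertical sides), giving the factor $e^{2C(d)/q}$ and the stated $\kappa\le e^{-C(d)/q}(-\log\lambda)^{-d}$.

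The main obstacle — and the reason the paper says ``we leave the details to the reader'' — is purely bookkeeping: propagating the dimension-dependent constants correctly through the nested induction, in particular verifying that the Cartan-$d$ trace bound and the $(L+1)^d$ exponent combine so that $K$ is still polynomial of degree $d$ in $\log(1/|J|)$ with a constant of the form $e^{C(d)/q}$, and checking that the $4^d$-fold decomposition of the polyrectangle boundary still closes with a $d$-dependent (but $N$- and $q$-uniform, for $q\le 1/2$) constant. There is no new analytic idea beyond $d=2$; the one place requiring a little care is that in the $d$-dimensional Cartan lemma one must successively maximize over shrinking polydisks $\rho\D^{d-1}$, $\rho\D^{d-2}$, \dots, and confirm at each stage that the ``good'' slice of the previous step still satisfies the hypotheses (upper bound $M$ everywhere, lower bound $m^*$ somewhere) needed to invoke the next-lower-dimensional statement — this is exactly the two-step argument of Lemma~\ref{lem:Cart2} iterated $d$ times, and it goes through verbatim.
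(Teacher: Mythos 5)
Your proposal is correct and is precisely the argument the paper intends: the paper gives no written proof of Proposition~\ref{prop:locddim} beyond saying it follows ``by arguments analogous to those used above for $d=2$,'' and your outline fills in exactly those details --- the inductive Cartan-$d$ trace bound, the iterated maximization giving the $(L+1)^d$ loss, the resulting $\kappa\lesssim e^{-C(d)/q}(\log(1/|J|))^{-d}$, and the $4^d$-fold boundary decomposition. The bookkeeping points you flag (uniformity of the $d$-dependent constants and the hypotheses at each stage of the slice induction) are the right ones, and they close as you describe.
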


As a precursor to the results of the next section, which involve $L^2$ functions with Fourier support in thin sets,  we now establish an uncertainty principle for $L^2(\R^d)$ functions under a quantitative decay assumption on their Fourier transforms. 

\begin{cor}
\label{cor:UPtheta}
Let $\Theta(\xi)=\Theta(|\xi|_1)=(\log(2+|\xi|_1))^{-\alpha}$, $0<\alpha<1$.  
Let $\calS:=\bigcup_{n\in\Z^d}I_n$ be as in Proposition~\ref{prop:locddim}. Then 
\EQ{
\label{eq:UPtheta} 
\|f\|_2 \le C(d,\alpha,A,\lambda) \|f\|_{L^2(\calS)}
}
for all $f\in L^2(\R^d)$ with $\| e^{\Theta(\xi)|\xi|_1}\, \hat{f} \|_{L^2(\R^d)} \le A \|f\|_{L^2(\R^d)}$. 
\end{cor}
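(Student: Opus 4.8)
The plan is to deduce Corollary~\ref{cor:UPtheta} from Proposition~\ref{prop:locddim} by choosing the parameter $q$ in terms of the frequency scale, after a dyadic (or Littlewood--Paley style) decomposition of $\hat f$. Writing $f=f_{\le R}+f_{>R}$ where $\what{f_{\le R}} = \what f \,\one_{\{|\xi|_1\le R\}}$, the high-frequency tail is controlled directly by the hypothesis: on $\{|\xi|_1>R\}$ one has $e^{\Theta(\xi)|\xi|_1}\ge e^{\Theta(R)R}$ (using that $t\mapsto \Theta(t)t = t(\log(2+t))^{-\alpha}$ is increasing for large $t$), so
\EQ{\nn
\|f_{>R}\|_2 \le e^{-\Theta(R)R}\,\| e^{\Theta(\xi)|\xi|_1}\what f\|_2 \le A\, e^{-R(\log(2+R))^{-\alpha}}\,\|f\|_2,
}
which is $\le \tfrac12\|f\|_2$ once $R\ge R_0(A,\alpha)$. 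Thus it suffices to lower-bound $\|f_{\le R}\|_{L^2(\calS)}$, and since $\|f_{\le R}\|_2 \ge \tfrac12\|f\|_2$ we may work with the band-limited function $g:=f_{\le R}$, whose Fourier transform is supported in $|\xi|_1\le R$.

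Next I would apply Proposition~\ref{prop:locddim} to $g$. Because $\supp\what g\subset\{|\xi|_1\le R\}$, we get the crude bound $\| e^{2\pi q|\xi|_1}\what g\|_2 \le e^{2\pi q R}\|\what g\|_2 = e^{2\pi qR}\|g\|_2$. Feeding this into \eqref{eq:L2loc ddim} gives
\EQ{\nn
\|g\|_2^2 \le e^{\frac{2C(d)}{q}}\Big(\sum_{n}\|g\|_{L^2(I_n)}^2\Big)^{\kappa} \big(e^{2\pi qR}\|g\|_2\big)^{2(1-\kappa)},
}
i.e. $\|g\|_2^{2\kappa} \le e^{\frac{2C(d)}{q}}\,e^{4\pi qR(1-\kappa)}\,\|g\|_{L^2(\calS)}^{2\kappa}$, so
\EQ{\nn
\|g\|_2 \le \exp\!\Big(\frac{C(d)}{q\kappa} + \frac{2\pi qR(1-\kappa)}{\kappa}\Big)\,\|g\|_{L^2(\calS)}.
}
Now I choose $q$ to balance things. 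The natural choice is $q \sim 1/\Theta(R) = (\log(2+R))^{\alpha}$... but $q$ must satisfy $q\le q^*(d)$, so instead I take $q$ small, of size $q \asymp q^*$ fixed, OR — better — I exploit that $R$ is a fixed constant depending only on $A,\alpha$. Indeed, once $R=R_0(A,\alpha)$ is fixed as above, every quantity on the right-hand side ($q=q^*(d)$ fixed, $\kappa\ge e^{-C(d)/q^*}(-\log\lambda)^{-d}$ fixed once $\lambda,d$ are fixed, and $R$ fixed) is a constant depending only on $d,\alpha,A,\lambda$. Hence $\|g\|_2\le C(d,\alpha,A,\lambda)\|g\|_{L^2(\calS)}$.

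Finally I reassemble: $\tfrac12\|f\|_2 \le \|g\|_2 \le C(d,\alpha,A,\lambda)\|g\|_{L^2(\calS)} \le C(d,\alpha,A,\lambda)\big(\|f\|_{L^2(\calS)} + \|f_{>R}\|_{L^2(\calS)}\big) \le C(d,\alpha,A,\lambda)\|f\|_{L^2(\calS)} + C(d,\alpha,A,\lambda)\,A\,e^{-R_0(\log(2+R_0))^{-\alpha}}\|f\|_2$. The key point is that $R_0$ can be chosen (depending on $A,\alpha$ and on the constant $C(d,\alpha,A,\lambda)$ just produced — note no circularity, since that constant did not itself depend on $R_0$ being large, only on $R_0$ being a fixed number, so one fixes $R_0$ last) large enough that the last term is absorbed, say $\le\tfrac14\|f\|_2$, yielding \eqref{eq:UPtheta}. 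The main obstacle, and the only genuinely delicate point, is the bookkeeping of the order of choices: one must verify that the constant from Proposition~\ref{prop:locddim} (with $q=q^*$) does not grow with $R$, so that $R_0$ can be selected afterwards to absorb the tail; this works precisely because with $\supp\what g$ bounded we used only the trivial exponential bound $e^{2\pi qR}$, keeping $q$ away from $0$. An alternative, cleaner route avoiding the absorption step is to let $q\to 0$ as $R\to\infty$ with $qR=o(1)$, e.g. $q = (\log R)^{-1}$; then $e^{\frac{2C(d)}{q}}=R^{2C(d)}$ grows polynomially while the tail $e^{-R(\log R)^{-\alpha}}$ decays faster than any polynomial, so no fixed-$R$ argument is needed — but this requires $q\le q^*(d)$ which holds for $R$ large, and requires tracking that $\kappa\gtrsim e^{-C(d)/q}(-\log\lambda)^{-d} = R^{-C(d)}(-\log\lambda)^{-d}$ stays positive, which it does. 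I would present whichever of these is shorter; the fixed-$R$ version is likely cleanest to write.
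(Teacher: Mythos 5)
There is a genuine gap, and it sits exactly at the point you flag as the ``only genuinely delicate point'': the crude bound $\| e^{2\pi q|\xi|_1}\what g\|_2 \le e^{2\pi qR}\|g\|_2$ destroys the argument. With that bound, the constant you extract from Proposition~\ref{prop:locddim} is
\[
\exp\Big(\frac{C(d)}{q\kappa}+\frac{2\pi qR(1-\kappa)}{\kappa}\Big),\qquad \kappa= e^{-C(d)/q}(-\log\lambda)^{-d},
\]
whose exponent contains the term $2\pi qR\kappa^{-1}=2\pi q\,e^{C(d)/q}(-\log\lambda)^d\,R$. For every admissible $q\in(0,q^*]$ the prefactor $q\,e^{C(d)/q}$ is bounded below by a fixed positive constant (the function $q\mapsto qe^{C(d)/q}$ is decreasing on $(0,C(d))$, so its infimum over $(0,q^*]$ is attained at $q=q^*$), so the constant is at least $e^{cR}$ with $c>0$ independent of $q$. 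The tail, however, only decays like $e^{-\Theta(R)R}=e^{-R(\log(2+R))^{-\alpha}}$, which is \emph{sub}-exponential. Hence $C(R_0)\cdot A\,e^{-\Theta(R_0)R_0}\to\infty$ as $R_0\to\infty$, and no choice of $R_0$ closes the absorption step in your fixed-$q$ version. Your ``cleaner alternative'' fares no better: with $q=(\log R)^{-1}$ the relevant quantity is not $e^{2C(d)/q}=R^{2C(d)}$ but $e^{C(d)/(q\kappa)+2\pi qR/\kappa}$, and $\kappa^{-1}=e^{C(d)/q}(-\log\lambda)^d=R^{C(d)}(-\log\lambda)^d$ turns the exponent into something of order $R^{C(d)+1}/\log R$, again far larger than $R(\log R)^{-\alpha}$. (Also, $qR=R/\log R$ is not $o(1)$ as you wrote.)

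The fix --- and this is what the paper does --- is to never use the trivial bound $e^{2\pi qR}$ at all. Choose $q$ by $2\pi q=\Theta(R)$; then for $|\xi|_1\le R$ one has $2\pi q\le\Theta(|\xi|_1)$ (as $\Theta$ is decreasing), so $e^{2\pi q|\xi|_1}\le e^{\Theta(\xi)|\xi|_1}$ on the support of $\what{f_1}$ and the hypothesis gives $\| e^{2\pi q|\xi|_1}\what{f_1}\|_2\le \| e^{\Theta(\xi)|\xi|_1}\what f\|_2\le A\|f\|_2$, with no $R$-dependent exponential factor. The resulting constant is $2^{1/(2\kappa)}A^{(1-\kappa)/\kappa}e^{C(d)/(\kappa q)}$ with $1/(\kappa q)\lesssim (\log(2+R))^{\alpha}e^{2\pi C(d)(\log(2+R))^{\alpha}}$, which is $R^{o(1)}$ precisely because $\alpha<1$; this \emph{is} beaten by $e^{-\Theta(R)R}$, and the absorption closes. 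Your tail estimate and the overall decomposition $f=f_1+f_2$ are correct and identical to the paper's; the missing idea is coupling $q$ to $R$ through $\Theta$ so that the weighted-norm hypothesis, rather than band-limitedness, controls $\| e^{2\pi q|\xi|_1}\what{f_1}\|_2$.
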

\begin{proof}
With $0<q$ small to be determined, we fix $R\ge 1$ so   that $2\pi q=\Theta(R)$. 
Split $f=f_1+f_2$, $\hat{f_1}(\xi)= \hat{f}(\xi)\one_{[|\xi|_1\le R]}$. Then by~\eqref{eq:L2loc ddim}, and since $2\pi q\le \Theta(\xi)$ for $|\xi|_1\le R$, 
\EQ{\nn 
\|f_1\|_2^2 &\le e^{\frac{2C(d)}{q}} \|f_1\|_{L^2(\calS)}^{2\kappa} \| e^{\Theta(\xi)|\xi|_1} \hat{f}_1\|_2^{2(1-\kappa)} \\
&\le e^{\frac{2C(d)}{q}} \|f_1\|_{L^2(\calS)}^{2\kappa} (A\|f\|_2)^{2(1-\kappa)}
}
with 
\[
\kappa =   e^{-\frac{C(d)}{q}} ( -\log \lambda)^{ -d} = e^{-\frac{2\pi C(d)}{\Theta(R)}} ( -\log \lambda)^{ -d}
\]
Moreover, since 
\EQ{\nn
\|f\|_2^2 & = \|f_1\|_2^2 + \|f_2\|_2^2 \\
& \le e^{\frac{2C(d)}{q}} (\|f\|_{L^2(\calS)} + \|f_2\|_2)^{2\kappa} (A\|f\|_2)^{2(1-\kappa)} + \|f_2\|_2^2 } 
and 
\[
\|f_2\|_2 \le e^{-\Theta(R)R} \| e^{\Theta(\xi)|\xi|_1} \hat{f}\|_2 \le Ae^{-\Theta(R)R} \|f\|_2  \le \frac12\|f\|_2
\]
where we chose $R$ large enough depending on~$A\ge 1$. It follows that
\[
\|f\|_2^2 \le 2 e^{\frac{2C(d)}{q}} (\|f\|_{L^2(\calS)} +Ae^{-\Theta(R)R} \|f\|_2)^{2\kappa} (A\|f\|_2)^{2(1-\kappa)}
\]
whence
\EQ{\nn
\|f\|_2  & \le 2^{\frac{1}{2\kappa}} A^{\frac{1-\kappa}{\kappa} } e^{\frac{C(d)}{\kappa q}} (\|f\|_{L^2(\calS)} +Ae^{-\Theta(R)R} \|f\|_2)  \\
& = 2^{\frac{1}{2\kappa}} A^{\frac{1-\kappa}{\kappa} } e^{\frac{C(d)}{\kappa q}}  \|f\|_{L^2(\calS)} +  \exp\big( - T(R) \big) \|f\|_2
}
 with
 \EQ{\nn 
T(R) &  = \Theta(R)R- \frac{C(d)}{\kappa q} - \kappa^{-1} \log(\sqrt{2}A) \\
&    = \Theta(R)R -  \Big( \frac{2\pi C(d)}{\Theta(R)} + \log(\sqrt{2}A) \Big)\, e^{\frac{2\pi C(d)}{\Theta(R)}} ( -\log \lambda)^{ d}  
}
In addition to $2A\le e^{\Theta(R)R} $ we require that $T(R)\ge1$. These conditions hold for sufficiently large~$R$. 
\end{proof}

The proof of the corollary gives an explicit and effective dependence of the constant $C(d,\alpha,A,\lambda)$ on $A, \lambda$, but we have no need for it. 
Corollary~\ref{cor:UPtheta} follows (perhaps with a different dependence on the constants) from a quantitative version of the Logvinenko-Sereda theorem. 
The results in the next section, however, do not. 

\section{Uncertainty principle with thin Fourier support}

We begin with the concept of a damping function. 

\begin{definition}\label{def:damp}
Let $\Theta$ be as in Corollary~\ref{cor:UPtheta}, with $\alpha\in (0,1)$ fixed.  
Let $Y\subset \R^d$. We say that $Y$ {\em admits a damping function} with parameters $c_1, c_2, c_3$, all falling into the interval~$(0,1)$, if there exists a function $\psi \in L^2(\R^d)$ satisfying 
\begin{itemize}
\item $\supp(\psi)\subset [-c_1,c_1]^d$ 
\item $\|\what{\psi}\|_{L^2([-1, 1]^d)}\ge c_2$ 
\item $| \what{\psi}(\xi)|  \le \langle \xi\rangle^{-d}\ $ for all $\xi\in\R^d$
\item $| \what{\psi}(\xi)|  \le \exp\big( -c_3 \Theta(|\xi|_1) |\xi|_1)\ $ for all $\xi\in Y$
\end{itemize}
Here $\langle \xi\rangle = (1+|\xi|_{2}^{2} )^{\frac12}$, $|\xi|_{2}^{2}:=\sum_{j=1}^d \xi_j^2$.
\end{definition}

\begin{lemma}
\label{lem:mainlem}
Fix $c_{1}\in (0,\frac12]$ and for each integer vector $n=(n_1,\ldots,n_d)\in\Z^d$, $d\ge2$,   
let $$I_{n}\subset \prod_{j=1}^d [n_j,n_j+1)$$ be a square with side length~$2c_{1}$. 
Define $\calS:=\bigcup_{n\in\Z^d}I_n$. 
Suppose $Y\subset \R^{d}$  is such that $Y+{{[-2,2]^d}}$ admits a damping function with parameters $c_1$, and $c_2, c_3  \in (0,1)$. 
Then every $f\in L^{2}(\R^{d})$ with 
$\supp(\hat{f})\subset Y$ satisfies
\EQ{
\label{eq:hatfkappa}
\|\hat{f}\|_{L^{2}([-1,1]^d)}^{2} &\le   C(d)c_{2}^{-2}\, { {\la R\ra^{2d}\, e^{\frac{4\pi C(d)}{c_{3}\Theta(R)}}}}\,   \big(  \| \one_{\calS}f \|_{H^{-d}}^{2\kappa}  \|f \|_{H^{-d}}^{2(1-\kappa)}  \\
& \qquad + \exp( -2c_3\kappa \Theta(R)R)\; \| f \|_{H^{-d}}^{2} \big)
}
and $\kappa=e^{-\frac{2\pi C(d)}{c_{3}\Theta(R)}} ( -d\log c_{1} )^{ -d}$, provided 
$R\ge (2d/c_3)^2$ and $0<c_3\le { {c_3^*(d):=}}2\pi q_*$ where $q_*$ is as in Proposition~\ref{prop:locddim}. 
\end{lemma}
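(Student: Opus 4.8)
The idea is to convolve $f$ with the damping function $\psi$ (or rather a translate/modulation structure associated with it) so as to produce a new band-limited function to which Proposition~\ref{prop:locddim} applies, and then to convert the resulting $L^2$ localization estimate back to a statement about $\hat f$ on the unit cube. Concretely, I would first set $g := \psi * f$ where $\psi$ is the damping function for $Y+[-2,2]^d$ with parameters $c_1, c_2, c_3$. Since $\supp(\hat f)\subset Y$ we have $\hat g = \hat\psi\,\hat f$, still supported in $Y$, and $g$ is band-limited only in a weak sense, so to genuinely apply Proposition~\ref{prop:locddim} one truncates: split $\hat g = \hat g\,\one_{[|\xi|_1\le R]} + \hat g\,\one_{[|\xi|_1> R]} =: \widehat{g_1} + \widehat{g_2}$. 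On the low-frequency piece, the exponential weight $e^{2\pi q|\xi|_1}$ with $2\pi q = c_3\Theta(R)$ is controlled, because on $Y$ the damping estimate gives $|\hat\psi(\xi)|\le \exp(-c_3\Theta(|\xi|_1)|\xi|_1)$ and $\Theta$ is decreasing, so $e^{2\pi q|\xi|_1}|\hat g(\xi)| = e^{c_3\Theta(R)|\xi|_1}|\hat\psi(\xi)||\hat f(\xi)| \le |\hat f(\xi)|$ for $|\xi|_1\le R$; the high-frequency tail $g_2$ is exponentially small, of size $\lesssim e^{-c_3\Theta(R)R}$ times a weighted norm of $\hat f$, which in turn is bounded by $\|f\|_{H^{-d}}$ using $|\hat\psi(\xi)|\le\langle\xi\rangle^{-d}$.

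Next I would apply Proposition~\ref{prop:locddim} to $g_1$ with $\lambda = (2c_1)^d$ (the measure of each cube $I_n$, which has side $2c_1$) and $q$ as above, obtaining
\[
\|g_1\|_{L^2(\R^d)}^2 \le e^{\frac{2C(d)}{q}}\Big(\sum_{n}\|g_1\|_{L^2(I_n)}^2\Big)^\kappa \,\|e^{2\pi q|\xi|_1}\widehat{g_1}\|_{L^2}^{2(1-\kappa)},
\]
with $\kappa \le e^{-C(d)/q}(-\log\lambda)^{-d} = e^{-2\pi C(d)/(c_3\Theta(R))}(-d\log(2c_1))^{-d}$ (up to adjusting the constant $C(d)$, one absorbs the factor $2$ inside the cube width, so the exponent becomes $(-d\log c_1)^{-d}$ as stated). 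The weighted norm on the right is $\le \|\hat f\|_{L^2}$ by the pointwise bound above, hence $\le \|f\|_2 \le \|f\|_{H^{-d}}$ — or more carefully, since we want $H^{-d}$ norms on the right throughout, one keeps track of $\langle\xi\rangle^{-d}$ factors and the frequency localization to $|\xi|_1\le R$, which costs the factor $\langle R\rangle^{2d}$ appearing in \eqref{eq:hatfkappa}. The local terms $\|g_1\|_{L^2(I_n)}$ are controlled by $\|g\|_{L^2(I_n)}$ plus the $g_2$ error, and since $\supp\psi\subset[-c_1,c_1]^d$, the convolution $\psi*f$ restricted to $I_n$ only sees $f$ on a slightly enlarged cube, so $\sum_n\|g\|_{L^2(I_n)}^2 \lesssim \|\one_{\calS'}f\|^2$ for a comparable set $\calS'$; matching this to $\|\one_{\calS}f\|_{H^{-d}}$ requires the convolution-with-$\psi$ to play the role of the $H^{-d}$ smoothing, which is where the $\langle\xi\rangle^{-d}$ bound on $\hat\psi$ is used once more.

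Finally, I would pass from $\|g_1\|_{L^2}^2$ back to $\|\hat f\|_{L^2([-1,1]^d)}^2$: since $\|\widehat\psi\|_{L^2([-1,1]^d)}\ge c_2$ and $\hat g = \hat\psi \hat f$, on the unit cube a lower bound of the form $\|\hat f\|_{L^2([-1,1]^d)}^2 \le c_2^{-2}\|\hat g\|_{L^2([-1,1]^d)}^2 \cdot(\text{something})$ is not literally true pointwise, so instead one argues that $f$ may be assumed to have $\hat f$ roughly constant on the unit cube, or one uses that the full $L^2$ mass $\|g\|_2^2 = \|\hat g\|_2^2 \ge$ (a $c_2^2$-fraction coming from the unit cube) — more precisely the correct route is: $\|\hat f\|_{L^2([-1,1]^d)}^2 \le c_2^{-2}\|\widehat\psi\,\hat f\|_{L^2([-1,1]^d)}^2 \cdot$ wait, that fails too; the actual mechanism (as in Bourgain–Dyatlov / Jin–Zhang) is that one does not need this — one applies the whole machine to $\psi * f$ directly and the $c_2$ enters because $\|\psi*f\|_2 \ge \|\widehat{\psi*f}\|_{L^2(\text{unit cube})}$ is bounded below in terms of $c_2\|\hat f\|_{L^2(\text{unit cube})}$ only after noting $|\hat\psi|$ is bounded below on a set of positive measure in the cube — so one inserts a harmless extra averaging or simply uses $\|\psi * f\|_{L^2([-1,1]^d \text{ in frequency})} \ge$ integrate $|\hat\psi|^2|\hat f|^2$ and bound $|\hat f|$ from below by its average is not available; the clean fix is that \emph{the lemma's left side is already $\|\hat f\|_{L^2([-1,1]^d)}$ convolved against $|\hat\psi|^2$}, i.e. we prove the bound with $\|\hat\psi\hat f\|_{L^2}$ on the left and then observe $\|\hat\psi\hat f\|_{L^2(\R^d)}^2 \ge \|\hat\psi\hat f\|_{L^2([-1,1]^d)}^2 \ge$ — no. Let me just say: the last step lower-bounds $\|g\|_{L^2(\R^d)}^2 = \|\hat\psi \hat f\|_{L^2(\R^d)}^2 \ge c_2^2\,\|\hat f\|_{L^2([-1,1]^d)}^2$ is \emph{false in general but becomes the right statement} once one notes the roles are reversed: we bound $\|\hat f\|_{L^2([-1,1]^d)}$ by first multiplying by $\hat\psi$ — and indeed the third bullet gives $|\hat\psi|\le\langle\xi\rangle^{-d}\le 1$, the wrong direction, so in fact the genuine argument replaces $f$ by $f$ itself and uses $\psi$ only to produce the favorable Fourier decay while the factor $c_2^{-2}$ comes from a duality/testing step against $\psi$. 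I expect this reconciliation — correctly threading the damping function through so that $c_2$ appears as a lower bound and the $H^{-d}$ norms appear on the right — to be the main obstacle; the rest is bookkeeping of the weights $\langle R\rangle^{2d}$, $e^{4\pi C(d)/(c_3\Theta(R))}$, and the tail term $e^{-2c_3\kappa\Theta(R)R}$, all of which emerge mechanically from Proposition~\ref{prop:locddim} and the splitting at frequency $R$.
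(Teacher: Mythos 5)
Your outline correctly reproduces the first half of the paper's argument (convolve with the damping function, split the Fourier support at $|\xi|_1= R$ with $2\pi q = c_3\Theta(R)$, apply Proposition~\ref{prop:locddim}, bound the weighted norm of the low piece and the tail of the high piece using the two pointwise decay properties of $\what\psi$). But the final step --- recovering $\|\hat f\|_{L^2([-1,1]^d)}$ from the $c_2$ lower bound --- is a genuine gap, and you yourself identify it as such without resolving it: every candidate inequality you write down for a single convolution $g=\psi*f$ is, as you note, false.

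The missing idea is to average over modulations. For each $\eta\in[-2,2]^d$ set $f_\eta(x):=e^{2\pi i x\cdot\eta}f(x)$, so $\what{f_\eta}(\xi)=\hat f(\xi-\eta)$ has support in $Y+\eta\subset Y+[-2,2]^d$ (this is exactly why the hypothesis asks for a damping function on $Y+[-2,2]^d$ rather than on $Y$), and set $g_\eta:=f_\eta*\psi$. One runs the entire splitting-plus-Proposition~\ref{prop:locddim} argument for each $g_\eta$, uniformly in $\eta$, obtaining the right-hand side of \eqref{eq:hatfkappa} as an upper bound for $\|g_\eta\|_2^2$. The $c_2$ step is then a Fubini argument rather than a pointwise one:
\begin{equation*}
c_2^{2}\,\|\hat f\|_{L^2([-1,1]^d)}^{2}
\le \int_{[-1,1]^d}|\hat f(\zeta)|^2\,d\zeta\int_{[-1,1]^d}|\what\psi(\xi)|^2\,d\xi
\le \int_{[-1,1]^d}\int_{[-2,2]^d}|\hat f(\xi-\eta)|^2|\what\psi(\xi)|^2\,d\eta\,d\xi
\le \int_{[-2,2]^d}\|g_\eta\|_2^2\,d\eta,
\end{equation*}
where the middle step is the change of variables $\eta=\xi-\zeta$ and the last uses $\what{g_\eta}=\what{f_\eta}\,\what\psi$. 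No pointwise lower bound on $|\what\psi|$ or on $|\hat f|$ is ever needed. Two smaller corrections: the local terms should be handled via $\one_{\calS'}g_\eta=\one_{\calS'}(\one_{\calS}f_\eta*\psi)$ with $\calS'$ the concentric half-size squares (so Proposition~\ref{prop:locddim} is applied with $\lambda=c_1^d$, not $(2c_1)^d$, matching the stated $\kappa$), and the passage from $\|f_\eta\|_{H^{-d}}$ back to $\|f\|_{H^{-d}}$ costs only a constant $C(d)$ since $\eta$ ranges over a bounded set.
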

\begin{proof}  Let $\eta\in { {[-2,2]}}^{d}$.  
Set $f_{\eta}(x):=e^{2\pi ix\cdot\eta}f(x)$, and $g_{\eta}:=f_{\eta}\ast\psi$ where $\psi$ is the damping function as in Definition~\ref{def:damp} associated with $Y+{{[-2,2]^d}}$. Split 
\EQ{
g_{\eta} &= g_{1}+ g_{2}, \\
\supp(\what{g_{1}}) &\subset \{ \xi\in\R^{d} \::\: |\xi|_{1}\le R\} \\
\supp(\what{g_{2}}) &\subset \{ \xi\in\R^{d} \::\: |\xi|_{1}> R\}
}
where $2\pi q =  c_{3}\Theta(R)$. 
Note that our assumption $c_3\leq 2\pi q_*$ guarantees that $q\leq q_*$ holds for any $R\geq 1$.
Note also that since $\supp(\psi)\subset [-c_1,c_1]^d$, we have 
\linebreak $\one_{\calS'}g_{\eta} = \one_{\calS'} (\one_{\calS}f_{\eta}\ast\psi)$ where $\calS':=\bigcup_{n\in\Z^d}I_n'$ with $I_{n}'$ a square with the same center as $I_{n}$, but half the side length. 
By Proposition~\ref{prop:locddim} with $\lambda=c_{1}^{d}$ one has 
\EQ{ \label{eq:geta}
\| g_{\eta}\|_{2}^{2} &= \|g_{1}\|_{2}^{2} + \|g_{2}\|_{2}^{2} \\
&\le \exp(2C/q) \big(  \| g_{\eta}\|_{L^{2}(\calS')} + \|g_{2}\|_{2}\big)^{2\kappa} \| e^{2\pi q |\xi|_{1}}\what{g_{1}}\|_{2}^{2(1-\kappa)}  + \|g_{2}\|_{2}^{2}
}
with $$0<\kappa\le  e^{-\frac{C(d)}{q}} ( -d\log c_{1} )^{ -d} = e^{-\frac{2\pi C(d)}{c_{3}\Theta(R)}} ( -d\log c_{1} )^{ -d}, $$   
$C(d)$ some absolute constant. 
By construction, {{$\supp(\what{f}_{\eta})\subset Y+{\eta}\subset Y+[-2,2]^d$, hence}}
\[
|\what{g_{\eta}}(\xi)|\le |\what{f_{\eta}}(\xi)| \exp\big( -c_3 \Theta(|\xi|_1) |\xi|_1) \qquad \forall\; \xi\in\R^{d}
\]
whence 
\EQ{\nn 
\| e^{2\pi q |\xi|_{1}}\what{g_{1}}\|_{2} &= \|e^{c_3 \Theta(R) |\xi|_1} \what{g_1}\|_2 \le \sup_{|\xi|_{1}\le R} \la \xi\ra^{d} \; \|f_{\eta}\|_{H^{-d}} \le \la R\ra^{d}\; \|f_{\eta}\|_{H^{-d}}  \\
\|g_{2}\|_{2} &\le \sup_{|\xi|_{1}\ge R}\exp( -c_3 \Theta(|\xi|_{1}) |\xi|_{1})\la \xi\ra^{d} \; \| f_{\eta}\|_{H^{-d}} \\ 
&\le \exp( -c_3 \Theta(R)R)\la R\ra^{d} \; \| f_{\eta}\|_{H^{-d}}
}
where we used that $|\xi|_{2}\le |\xi|_{1}$, and that $r\mapsto \exp( -c_3 \Theta(r) r)\la r\ra^{d} $ is decreasing for large $r$. To be specific,
\EQ{\nn 
\exp( -c_3 \Theta(r) r)\la r\ra^{d} &= \exp(-h(r))\\
h(r) &= c_3\, (\log(2+r))^{-\alpha}\, r - \frac{d}{2} \log(1+r^2)
}
Differentiating, we obtain
\EQ{\nn
h'(r) &= c_3\, (\log(2+r))^{-\alpha} \big[  1 - \frac{\alpha r}{2+r} (\log(2+r))^{-1}\big] - \frac{dr}{1+r^2}\\
&\ge \frac{c_3}{2}\, (\log(2+r))^{-\alpha} - dr^{-1}\ge \frac{c_3}{2}\, (\log(2+r))^{-1} - dr^{-1}
}
where we used that $\frac{\alpha r}{2+r} (\log(2+r))^{-1} \le \frac12$ for all $r\ge0$.  One has $u> \log(2+u^2)$ for $u\ge 2$, say. Hence, if $r\ge  (2d/c_3)^2$, then 
\[
\frac{c_3}{2}\, (\log(2+r))^{-1} - dr^{-1}>0
\] 
and thus $h'(r)>0$. So it suffices to assume that $R\ge (2d/c_3)^2$.  

Inserting these bounds into~\eqref{eq:geta} yields
\EQ{ \nn
\| g_{\eta}\|_{2}^{2} 
&\le e^{\frac{2C(d)}{q}} \big(  \| \one_{\calS}f_{\eta}\|_{H^{-d}} +   \exp( -c_3 \Theta(R)R)\la R\ra^{d} \; \| f_{\eta}\|_{H^{-d}} \big)^{2\kappa} 
\big(\la R\ra^{d}\; \|f_{\eta}\|_{H^{-d}}\big)^{2(1-\kappa)}\\
&  + \exp( -2c_3 \Theta(R)R)\la R\ra^{2d} \; \| f_{\eta}\|_{H^{-d}}^{2}
}
Since  $\sup_{\eta\in { {[-2,2]}}^{d}}\| f_{\eta}\|_{H^{-d}} \le C(d) \|f\|_{H^{-d}}$, we can simplify this further: 
\EQ{
\| g_{\eta}\|_{2}^{2} 
&\le C(d)\la R\ra^{2d}\, e^{\frac{4\pi C(d)}{c_{3}\Theta(R)}} \big(  \| \one_{\calS}f \|_{H^{-d}}^{2\kappa}  \|f \|_{H^{-d}}^{2(1-\kappa)}  +   \exp( -2c_3\kappa \Theta(R)R)\; \| f \|_{H^{-d}}^{2} \big).
}
Finally, 
\EQ{\nn
\|\what{f}\|_{L^{2}([-1,1]^{d})}^{2} &\le c_{2}^{-2}\int_{[-1,1]^{d}}|\what{f}(\zeta)|^{2}\, d\zeta \, \int_{[-1,1]^{d}} |\what{\psi}(\xi)|^{2}\, d\xi \\
&\le c_{2}^{-2} \int_{[-1,1]^{d}} \int_{[-2,2]^{d}} |\what{f}(\xi-\eta)|^{2} |\what{\psi}(\xi)|^{2}\,  d\eta d\xi \\
&\le c_{2}^{-2}  \int_{[-2,2]^{d}} \int_{\R^{d}} |\what{f}(\xi-\eta)|^{2} |\what{\psi}(\xi)|^{2}\,   d\xi d\eta\\
&= c_{2}^{-2}  \int_{[-2,2]^{d}} \|g_{\eta}\|_{2}^{2}\, d\eta
}
and we are done. 
\end{proof}

We now remove the localization in Fourier space on the left-hand side of~\eqref{eq:hatfkappa} in order to obtain the main result of this section. 

\begin{cor}
\label{cor:UPY} 
Fix $c_{1}\in (0,\frac12]$ and for each integer vector $n=(n_1,\ldots,n_d)\in\Z^d$, $d\ge2$,   
let $$I_{n}\subset \prod_{j=1}^d [n_j,n_j+1)$$ be a square with side length~$2c_{1}$. 
Define $\calS:=\bigcup_{n\in\Z^d}I_n$. 
Suppose $Y\subset [-\alpha_1,\alpha_1]^d \subset \R^{d}$ with $\alpha_1\ge1$  is such that $Y+{{[-2,2]^d}}+\eta$ admits a damping function with parameters $c_1$, and $c_2, c_3  \in (0,1)$ for each $\eta\in {{[-\alpha_1-1,\alpha_1+1]}}^{d}$. Assume further that $0<c_3<c_3^*(d)\ll1$, with $c_3^*(d)$ be as in Lemma \ref{lem:mainlem}.
Then every $f\in L^{2}(\R^{d})$ with 
$\supp(\hat{f})\subset Y$ satisfies
\EQ{\label{eq:fYUP}
\|f\|_2\le C_{*} \|f\|_{L^2(\calS)}
}
with constant $C_{*}$ depending only on $d,c_1,c_2,c_3,\alpha$ explicitly as in \eqref{def:Cdc2c3alp}.
\end{cor}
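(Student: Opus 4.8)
The plan is to deduce Corollary~\ref{cor:UPY} from Lemma~\ref{lem:mainlem} by covering the compact Fourier support $Y$ with finitely many unit cubes and summing the local estimates, then converting the negative Sobolev norms back to $L^2$ norms. First I would fix a suitable scale $R$, large enough that all the hypotheses of Lemma~\ref{lem:mainlem} are met (in particular $R\ge (2d/c_3)^2$) and large enough that the error term $\exp(-2c_3\kappa\Theta(R)R)$ is small relative to the main term; since $\Theta(R)R\to\infty$ while $\kappa=\kappa(R)$ only decays like $e^{-2\pi C(d)/(c_3\Theta(R))}(-d\log c_1)^{-d}$, a fixed choice of $R=R(d,c_1,c_3,\alpha)$ suffices. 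With $R$ frozen, $\kappa$ becomes an absolute constant depending only on $d,c_1,c_3,\alpha$, and the whole prefactor in \eqref{eq:hatfkappa} is a constant of the same dependence.

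Next I would run a covering/translation argument to upgrade \eqref{eq:hatfkappa}, which only controls $\|\hat f\|_{L^2([-1,1]^d)}$, to control of $\|\hat f\|_{L^2(\R^d)}=\|f\|_2$. Cover $[-\alpha_1,\alpha_1]^d$ by $\lesssim \alpha_1^d$ translates of $[-1,1]^d$ by lattice points $\eta\in\Z^d\cap[-\alpha_1,\alpha_1]^d$. For each such $\eta$, apply Lemma~\ref{lem:mainlem} to $f_\eta(x)=e^{-2\pi i x\cdot\eta}f(x)$, whose Fourier transform is $\hat f(\cdot+\eta)$ and is supported in $Y-\eta\subset[-2\alpha_1,2\alpha_1]^d$; the hypothesis that $Y+[-2,2]^d+\eta$ admits a damping function for every $\eta\in[-\alpha_1-1,\alpha_1+1]^d$ is exactly what is needed so that $Y-\eta$ (together with its $[-2,2]^d$-fattening) admits one, uniformly in $\eta$. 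Summing \eqref{eq:hatfkappa} over the $O(\alpha_1^d)$ choices of $\eta$, using $\sum_\eta\|\hat f\|_{L^2([-1,1]^d+\eta)}^2=\|f\|_2^2$ on the left and the elementary bounds $\|\one_\calS f_\eta\|_{H^{-d}}\le C(d)\|\one_\calS f\|_{L^2}$, $\|f_\eta\|_{H^{-d}}\le C(d)\|f\|_{L^2}$ on the right, yields
\[
\|f\|_2^2\le C(d,c_1,c_2,c_3,\alpha)\,\alpha_1^d\,\big(\|\one_\calS f\|_{L^2}^{2\kappa}\|f\|_{L^2}^{2(1-\kappa)}+\tfrac14\|f\|_2^2\big),
\]
after absorbing $\langle R\rangle^{2d}e^{4\pi C(d)/(c_3\Theta(R))}$ and the error term (made $\le\frac14$ by the choice of $R$) into constants.

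Finally I would absorb the $\tfrac14\|f\|_2^2$ term into the left-hand side (the combinatorial factor $\alpha_1^d$ must be swallowed by enlarging $R$ slightly, or simply absorbed into $C_*$ which is allowed to depend on $\alpha_1$ via $\alpha$), obtaining $\|f\|_2^2\le C\,\|\one_\calS f\|_{L^2}^{2\kappa}\|f\|_2^{2(1-\kappa)}$, and then divide by $\|f\|_2^{2(1-\kappa)}$ and raise to the power $1/(2\kappa)$ to arrive at \eqref{eq:fYUP} with $C_*=C^{1/(2\kappa)}$, which has the claimed explicit dependence. I expect the only genuinely delicate point to be the bookkeeping of the $R$-dependence: one must verify that a single $R$ can be chosen depending only on $d,c_1,c_3,\alpha$ (not on $f$, $Y$, or $\alpha_1$) that simultaneously makes the exponentially decaying error term dominate over the loss $\alpha_1^d$ coming from the covering. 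Concretely this requires checking $2c_3\kappa(R)\Theta(R)R - d\log\alpha_1 \to\infty$, which holds because $\Theta(R)R$ grows superpolynomially in any power of $\log$ while $\kappa(R)$ decays only sub-polynomially slowly; once this is confirmed, the rest is routine.
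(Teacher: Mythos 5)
Your overall strategy (modulate $f$ to translate its Fourier support, apply Lemma~\ref{lem:mainlem} cube by cube, sum, absorb the error term by choosing $R$ large) is the same as the paper's, but there is a genuine gap in the summation step, and both of your proposed fixes contradict the statement. When you sum the local estimate over the $\sim\alpha_1^d$ translates and bound each right-hand side crudely by $\|\one_{\calS}f_\eta\|_{H^{-d}}\le C(d)\|\one_{\calS}f\|_2$ and $\|f_\eta\|_{H^{-d}}\le C(d)\|f\|_2$, you pay a factor equal to the number of cubes, i.e.\ $\alpha_1^d$; after dividing by $\|f\|_2^{2(1-\kappa)}$ and raising to the power $1/(2\kappa)$ this becomes $\alpha_1^{d/(2\kappa)}$ inside $C_*$. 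The constant in the corollary is required to depend only on $d,c_1,c_2,c_3,\alpha$, where $\alpha\in(0,1)$ is the exponent in $\Theta$ from the damping function --- it is \emph{not} $\alpha_1$, and $C_*$ may not depend on $\alpha_1$. This matters: in the proof of Theorem~\ref{thm:FUP2} the corollary is applied with $\alpha_1\sim NL^{-mT}+2$, and a constant growing with $\alpha_1$ would destroy the uniform gain $\gamma_0$ per step of the induction on scales. Your alternative of enlarging $R$ to beat $d\log\alpha_1$ likewise makes $R$, hence $\kappa$ and $C_*$, depend on $\alpha_1$.

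The missing idea is precisely the reason Lemma~\ref{lem:mainlem} is stated with $H^{-d}$ norms on its right-hand side. The paper sums over $\ell\in(2\Z)^d$ (so the translated cubes tile $\R^d$) and keeps the $H^{-d}$ norms until \emph{after} the sum: since $\widehat{f_\ell}(\xi)=\hat f(\xi-\ell)$, one has
\[
\sum_\ell\|f_\ell\|_{H^{-d}}^2=\int_{\R^d}\Big(\sum_\ell\langle\xi+\ell\rangle^{-2d}\Big)|\hat f(\xi)|^2\,d\xi\le C(d)\|f\|_2^2,
\]
and similarly $\sum_\ell\|\one_{\calS}f_\ell\|_{H^{-d}}^2\le C(d)\|\one_{\calS}f\|_2^2$, with constants independent of how many $\ell$ occur. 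H\"older's inequality in the form $\sum_\ell a_\ell^\kappa b_\ell^{1-\kappa}\le(\sum_\ell a_\ell)^\kappa(\sum_\ell b_\ell)^{1-\kappa}$ then yields \eqref{eq:R0} with no $\alpha_1$-dependence at all, after which the choice of $R=R_0(d,c_1,c_2,c_3,\alpha)$ in \eqref{eq:R} (note it must also involve $c_2$, through condition (iv)) and the absorption of the error term proceed essentially as you describe.
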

\begin{proof}
Let $\ell\in { {(2\Z)^d}}$ be such that $\ell+{ {[-1,1]^d}}\cap [-\alpha_1,\alpha_1]^d\ne \emptyset$ and define $f_\ell(x):= e^{2\pi ix\cdot\ell} f(x)$ so that $\what{f_\ell}(\xi)=\what{f}(\xi-\ell)$ and  $\supp(\what{f_\ell})\subset Y+\ell$. In order to apply Lemma~\ref{lem:mainlem}, we also need to ensure that $Y+{{[-2,2]^d}}+\ell$ admits a damping function. This, however, follows from our assumptions. 
Hence, for each such~$\ell$, 
 \EQ{
\label{eq:hatfellkappa}
\|\hat{f}\|_{L^{2}([-1,1]^d+\ell)}^{2} &\le   C(d)c_{2}^{-2}\, { {\la R\ra^{2d}\, e^{\frac{4\pi C(d)}{c_{3}\Theta(R)}}}}\,  \big(  \| \one_{\calS}f_\ell \|_{H^{-d}}^{2\kappa}  \|f_\ell \|_{H^{-d}}^{2(1-\kappa)}  \\
& \qquad +   \exp( -2c_3\kappa \Theta(R)R)\; \| f_\ell \|_{H^{-d}}^{2} \big)
}
and $\kappa=e^{-\frac{2\pi C(d)}{c_{3}\Theta(R)}} ( -d\log c_{1} )^{ -d}$, provided $R\ge (2d/c_3)^2$.
Summing over ${ {\ell\in(2\Z)^d}}$, and using H\"older's inequality yields
\EQ{\label{eq:R0}
\|f\|_2^2 
&\le  C(d)c_{2}^{-2}\, { {\la R\ra^{2d}\, e^{\frac{4\pi C(d)}{c_{3}\Theta(R)}}}}\,   
\big(  \| \one_{\calS}f \|_{2}^{2\kappa}  \|f \|_{2}^{2(1-\kappa)} +   \exp( -2c_3\kappa \Theta(R)R)\; \| f  \|_{2}^{2} \big)\\
&=C(d)c_{2}^{-2}\, { {\la R\ra^{2d}\, e^{\frac{4\pi C(d)}{c_{3}\Theta(R)}}}}\| \one_{\calS}f \|_{2}^{2\kappa}  \|f \|_{2}^{2(1-\kappa)}\\
&\qquad+C(d)c_{2}^{-2}\, { {\la R\ra^{2d}\, e^{\frac{4\pi C(d)}{c_{3}\Theta(R)}}}} e^{-2c_3\kappa \Theta(R)R}\| f  \|_{2}^{2}.
}
Suppose further that $R$ satisfies,
\begin{align}\label{eq:R}
R\geq 
R_0(d,c_1,c_2,c_3,\alpha):=
\max
\begin{cases}
&(i).\ \exp{\Big[\Big(\frac{16\pi C(d)}{c_3}\Big)^{\frac{1}{1-\alpha}}\Big]}, \\
&(ii).\ \exp{\Big(4^{\frac{1}{1-\alpha}}\Big)}, \\
&(iii).\ \Big(\frac{(-d\log{c_1})^{d}}{c_3}\Big)^8,\\
&(iv).\ \Big(4\log{\frac{2C(d)}{c_2^2}}\Big)^2, \\
&(v).\ (8d)^4.
\end{cases}
\end{align}
Note that (i), (ii), (iii) of \eqref{eq:R} imply
\EQ{\label{eq:R1}
e^{-\frac{2\pi C(d)}{c_3 \Theta(R)}}(R+2)^{\frac{1}{4}}&\geq 1,\\
\Theta(R) (R+2)^{\frac{1}{8}}&\geq 1,\, \ \text{and}\\
\frac{c_3}{(-d\log{c_1})^d} (R+2)^{\frac{1}{8}}&\geq 1,
}
respectively. Hence multiplying the three inequalities of \eqref{eq:R1} yields
\EQ{\label{eq:R1.5}
c_3\kappa \Theta(R) (R+2)\geq \sqrt{R+2}&, \ \ \text{or}\\
\kappa\geq (c_3\Theta(R)\sqrt{R+2})^{-1}&,
}
and thus
\EQ{\label{eq:R2}
e^{2c_3\kappa \theta(R) R}\ge e^{c_3\kappa \theta(R)(R+2)}\geq e^{\sqrt{R+2}}.
}
One also derives from (iv), (v) and (i) that
\EQ{\label{eq:R3}
\frac{1}{4}\sqrt{R+2}&\geq \log{\frac{2C(d)}{c_2^2}},\\
\frac{1}{2}\sqrt{R+2}&\geq 2d\log(R+2)\geq \log{\la R\ra^{2d}},\, \ \text{and},\\
\frac{1}{4}\sqrt{R+2}&\geq \frac{4\pi C(d)}{c_3 \Theta(R)},
}
respectively. Hence by summing up the three inequalities of \eqref{eq:R3}, and exponentiating, we obtain
\EQ{\label{eq:R4}
e^{\sqrt{R+2}}\geq 2C(d)c_{2}^{-2}\, { {\la R\ra^{2d}\, e^{\frac{4\pi C(d)}{c_{3}\Theta(R)}}}}.
}
Combining \eqref{eq:R2} with \eqref{eq:R4}, we arrive at
\EQ{\nn
C(d)c_{2}^{-2}\, {{\la R\ra^{2d}\, e^{\frac{4\pi C(d)}{c_{3}\Theta(R)}}}}\, e^{-2c_3\kappa \Theta(R)R}\le \frac 12.
}
Thus \eqref{eq:R0} yields
\EQ{\nn 
\|f\|_2  &\le \Big(2C(d)c_{2}^{-2}\, {{\la R\ra^{2d}\, e^{\frac{4\pi C(d)}{c_{3}\Theta(R)}}}} \Big)^{\frac{1}{2\kappa}}  \| \one_{\calS}f \|_{2}.
}
Combining the estimate of $\kappa$ in \eqref{eq:R1.5} with \eqref{eq:R4}, we obtain
\EQ{\nn
\Big(2C(d)c_{2}^{-2}\, {{\la R\ra^{2d}\, e^{\frac{4\pi C(d)}{c_{3}\Theta(R)}}}} \Big)^{\frac{1}{2\kappa}}\leq e^{\frac{c_3 \Theta(R) (R+2)}{2}}.
}

Now we take $R_0$ as in \eqref{eq:R1} and define $R_1$ as follows
\EQ{\label{def:R}
R_1(d,c_1, c_2, c_3, \alpha):=\max{\big((2d/c_3)^2, R_0(d,c_1,c_2,c_3,\alpha)\big)}.
}
Then 
\EQ{\nn
\|f\|_2  &\le C_*(d,c_1,c_2,c_3,\alpha) \| \one_{\calS}f \|_{2},
}
with 
\EQ{\label{def:Cdc2c3alp}
C_*(d,c_1,c_2,c_3,\alpha)=e^{\frac{c_3 \Theta(R_1) (R_1+2)}{2}},
}
as claimed.
\end{proof}

\section{FUP assuming damping functions on $Y$}

In section we prove, by the same iteration as in \cite{BD}, the fractal uncertainty principle for sets $X\subset [-1,1]^{d}$ and $Y\subset [-N,N]^{d}$. On $Y$ we do not impose a geometric condition. Rather, in this section we still restrict ourselves to assuming the existence of damping functions living on~$Y$, as well as on sets derived from $Y$ through translations  and dilations, see Definition~\ref{def:damp}.  On $X$ we impose a certain tree structure ``with gaps'', cf.~\cite[Lemma~2.10]{BD}. 

\begin{definition}
\label{def:porous} 
We say that $X\subset [-1,1]^{d}\subset \R^{d}$ {\em is porous at scale} $L\ge3$ {{{\em with depth $n$}}}, where $L$ is an integer, if the following holds:  denote by $\cC_{n}$ the cubes obtained from $[-1,1]^{d}$ by partitioning it into congruent cubes of side length $L^{-n}$. Thus, $\#\, \cC_{n}=2^{d}L^{nd}$. The condition on $X$ is that for all $Q\in \cC_{n}$ with $Q\cap X\ne\emptyset$, there exists $Q'\in \cC_{n+1}$  so that $Q'\subset Q$ and $Q'\cap X=\emptyset$. 
\end{definition}

{{It is shown in~\cite{BD} that sets $X\subset\R$ obeying the $\delta$-regularity condition on scales $N^{-1}$ to $1$ (see Definition \ref{def:deltareg}) satisfy this porosity property at depth $n$ for all $n\geq 0$ with $L^{n+1}\leq N$.
We include a $d$-dimensional analogy in Appendix \ref{sec:appregular}, see Lemma \ref{lem:BDlemma210}.}}
We can now formulate the Fractal Uncertainty Principle, conditionally on the existence of damping functions in $Y$. As in~\cite{BD} the argument is based on an induction on scales, where at each step a small gain is achieved by means of Corollary~\ref{cor:UPY}. 
Recall that $\alpha\in (0,1)$ is the parameter from the damping function.  

\begin{theorem}
\label{thm:FUP2} 
Let $X\subset [-1,1]^{d}\subset \R^{d}$  be porous at scale $L\ge3$ with depth $n$, {{for all $n\geq 0$ with $L^{n+1}\leq N$}}. 
Suppose $Y\subset [-N,N]^{d}$ is such that for all {{ $n\geq 0$ with $L^{n+1}\leq N$}} one has that for all $$\eta\in [-NL^{-n}-{{3}}, NL^{-n}+{{3}}]^{d}$$ the set 
\EQ{
\label{eq:Y changed}
L^{-n}Y+{{[-4,4]^d}}+\eta
}
admits a damping function with parameters $c_1=(2L)^{-1}\in (0,\frac12]$, and $c_2, c_3  \in (0,1)$. Assume $0<c_{3}< c_{3}^{*}(d)$ as in Corollary~\ref{cor:UPY}. Then there exists $\beta=\beta(L,c_{2},c_{3},d,\alpha)>0$ {{and $\tilde{C}=\tilde{C}(L,c_{2},c_{3},d,\alpha)>0$}} so that any $f\in L^{2}(\R^{d})$ with $\supp(\what{f}\,)\subset Y$ satisfies 
\EQ{\label{eq:FUP}
\| f\|_{L^{2}(X)}\le \tilde{C} N^{-\beta} \|f\|_{L^{2}(\R^{d})}
}
for all $N\ge N_{0}( L,c_{2},c_{3},d,\alpha)$. 
\end{theorem}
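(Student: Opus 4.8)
The argument is an induction on scales modeled on the Bourgain--Dyatlov iteration, where Corollary~\ref{cor:UPY} plays the role of the basic gain at each step. Fix $N$ and let $n_{*}$ be the largest integer with $L^{n_{*}+1}\le N$, so $n_{*}\asymp \frac{\log N}{\log L}$. For each $0\le n\le n_{*}$ and each cube $Q\in\cC_{n}$ with $Q\cap X\ne\emptyset$, I want to control $\|f\|_{L^{2}(Q\cap X)}$ in terms of the restriction of $f$ to the slightly smaller sub-cubes in $\cC_{n+1}$, gaining a multiplicative factor $\theta=\theta(L,c_{2},c_{3},d,\alpha)<1$ at each generation. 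Iterating $n_{*}$ times then yields $\|f\|_{L^{2}(X)}\le \tilde C\,\theta^{n_{*}}\|f\|_{2}\le \tilde C N^{-\beta}\|f\|_{2}$ with $\beta = -\log\theta/\log L$.

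\textbf{The rescaling step.} The heart of the matter is the single-generation gain. Fix $Q\in\cC_{n}$ meeting $X$; $Q$ has side length $2L^{-n}$ and by porosity one of its $L^{d}$ children in $\cC_{n+1}$ is disjoint from $X$. Rescale $Q$ to the unit cube via the affine map $x\mapsto L^{n}(x-x_{Q})$ (up to translation), which turns $f$ into a function $\tilde f$ whose Fourier support lies in $L^{-n}Y$ translated by a frequency of size $\lesssim L^{n}\cdot L^{-n}N\le$ (something bounded by $NL^{-n}$), which by hypothesis is exactly the range of $\eta$ for which $L^{-n}Y+[-4,4]^{d}+\eta$ admits a damping function. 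The partition of $Q$ into $\cC_{n+1}$-children becomes the partition of $[-1,1]^{d}$ into $L^{d}$ congruent cubes; after a further dyadic sub-partition the ``$X$-free'' child furnishes a set $\calS$ of the form required in Corollary~\ref{cor:UPY}, namely a union $\bigcup_{n}I_{n}$ of squares of side length $2c_{1}=L^{-1}$, one inside each unit cell of a lattice, and crucially \emph{disjoint from $X$} at the relevant scale. Applying Corollary~\ref{cor:UPY} to $\tilde f$ gives $\|\tilde f\|_{2}\le C_{*}\|\tilde f\|_{L^{2}(\calS)}$; since $\calS$ avoids (the rescaled) $X$, this reads, after undoing the rescaling and squaring, as
\EQ{\nn
\|f\|_{L^{2}(Q)}^{2}\;\le\; C_{*}^{2}\,\big(\|f\|_{L^{2}(Q)}^{2}-\|f\|_{L^{2}(Q\cap X\cap(\text{bad child complement}))}^{2}\big),
}
which after a standard rearrangement (pigeonholing over the $\cC_{n+1}$-children of $Q$ and using that $f$ restricted to the bad child has the full $L^{2}$ mass of that child removed from the $X$-portion) produces
\EQ{\nn
\sum_{Q'\in\cC_{n+1},\,Q'\subset Q}\|f\|_{L^{2}(Q'\cap X)}^{2}\;\le\;\big(1-C_{*}^{-2}\big)\,\|f\|_{L^{2}(Q\cap X)}^{2}\;\cdot\;(\text{harmless factor}),
}
i.e. a genuine geometric decay with ratio $\theta^{2}=1-c(L,c_{2},c_{3},d,\alpha)$. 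Here one must be a little careful: Corollary~\ref{cor:UPY} is stated with a fixed $c_{1}\in(0,\tfrac12]$ and the damping-function hypothesis is assumed uniformly over the stated $\eta$-range and uniformly over $n$, which is exactly why the constant $C_{*}$ does \emph{not} degrade as $n$ grows — this uniformity is the whole point of the scaling structure $\Phi_{N}(x)=N\Psi_{0}(x/N)$ in the introduction and of the way Theorem~\ref{thm:FUP2} quantifies over $n$.

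\textbf{Assembling the induction.} Define $a_{n}:=\sum_{Q\in\cC_{n}}\|f\|_{L^{2}(Q\cap X)}^{2}=\|f\|_{L^{2}(X)}^{2}$ for every $n$ in the allowed range (this is the same quantity, just organized by cubes of generation $n$). Summing the single-generation estimate over all $Q\in\cC_{n}$ meeting $X$, and over those not meeting $X$ the left side vanishes, gives $a_{n+1}\le \theta^{2} a_{n}$, hence $a_{n_{*}}\le \theta^{2n_{*}} a_{0}$. Since $a_{n_{*}}=\|f\|_{L^{2}(X)}^{2}$ and $a_{0}\le\|f\|_{2}^{2}$, and $n_{*}\ge \frac{\log N}{\log L}-1$, we get $\|f\|_{L^{2}(X)}\le \theta^{-1}\theta^{\log N/\log L}\|f\|_{2}=\theta^{-1}N^{\log\theta/\log L}\|f\|_{2}$, which is \eqref{eq:FUP} with $\beta=-\log\theta/\log L>0$ and $\tilde C=\theta^{-1}$ (up to the harmless factors collected along the way, which only affect $\tilde C$ and the threshold $N_{0}$). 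The quantitative form of $\beta$ in terms of $L,c_{2},c_{3},d,\alpha$ comes from tracing $C_{*}$ through \eqref{def:Cdc2c3alp} and \eqref{def:R}.

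\textbf{Main obstacle.} The delicate point is \emph{not} the bookkeeping of the iteration but verifying that, after the affine rescaling of a cube $Q\in\cC_{n}$, the hypotheses of Corollary~\ref{cor:UPY} are met with constants independent of $n$: one must check (a) that the rescaled Fourier support is contained in $L^{-n}Y$ up to a bounded frequency translate lying in the $\eta$-range for which a damping function is assumed — this forces the precise $[-NL^{-n}-3,NL^{-n}+3]^{d}$ range and the $[-4,4]^{d}$ enlargement in \eqref{eq:Y changed}, chosen so that after convolving with the damping function's support $[-c_{1},c_{1}]^{d}$ and translating by lattice points one still lands inside the assumed set; and (b) that the ``$X$-free child'' guaranteed by porosity at generation $n+1$, when intersected with the lattice of $c_{1}$-squares, genuinely produces a set $\calS$ disjoint from (rescaled) $X$ occupying one square per unit cell — this is where the depth-$n$ porosity, as opposed to mere porosity at one scale, is used, and where one may need to pass to a sub-partition so that ``side length $L^{-1}$'' matches $2c_{1}$ exactly. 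Once these compatibility checks are in place, the rest is a routine rearrangement and summation.
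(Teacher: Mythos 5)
Your single-scale step is right in spirit, but the iteration you build on top of it is broken, and the breakage is not a bookkeeping issue. You define $a_{n}:=\sum_{Q\in\cC_{n}}\|f\|_{L^{2}(Q\cap X)}^{2}$ and observe yourself that this equals $\|f\|_{L^{2}(X)}^{2}$ for every $n$; a bound $a_{n+1}\le\theta^{2}a_{n}$ with $\theta<1$ would therefore force $f\equiv 0$ on $X$, so no such bound can hold. What a single application of Corollary~\ref{cor:UPY} actually gives is $\|f\|_{L^{2}(X)}^{2}\le(1-C_{*}^{-2})\|f\|_{2}^{2}$ --- one fixed gain, with the \emph{full} $L^{2}$ norm on the right, not the $X$-restricted norm. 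To iterate, the function obtained after ``removing the mass on the missing cubes'' would again need Fourier support in (a controlled enlargement of) $Y$, and restriction in physical space destroys that. Relatedly, Corollary~\ref{cor:UPY} cannot be applied cube-by-cube to $\one_{Q}f$ as you propose, because $\one_{Q}f$ no longer has thin Fourier support; the corollary is a global statement about band-limited functions.

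The missing idea --- the heart of the paper's proof and of the original Bourgain--Dyatlov argument --- is to damp $f$ multiplicatively rather than restrict it. One chooses a Schwartz bump $\phi$ with $\supp\what{\phi}\subset[-1,1]^{d}$, forms $\Psi_{n}=\psi_{n}\ast\one_{\calS_{n+1}^{*}}$ with $\calS_{n+1}^{*}$ a neighborhood of the generation-$(n+1)$ cubes meeting $X$, and sets $f_{m}=\prod_{n=0}^{m-1}\Psi_{nT}\,f$. Each $\Psi_{nT}$ is $\ge 1-C_{\phi}L^{1-T}$ on $X$ but $\le c_{\phi}L^{1-T}$ on the concentric halves $U_{mT+1}$ of the missing cubes, and multiplying by it enlarges the Fourier support only additively by $[-L^{(n+1)T},L^{(n+1)T}]^{d}$, so that after rescaling by $L^{-mT}$ the support of $\what{f_{m}}$ stays inside $L^{-mT}Y+[-2,2]^{d}$ --- exactly the sets for which the damping-function hypothesis \eqref{eq:Y changed} is assumed. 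Corollary~\ref{cor:UPY} applied to the rescaled $f_{m}$ shows $\|f_{m}\|_{L^{2}(U_{mT+1})}\ge C_{*}^{-1}\|f_{m}\|_{2}$, and since $\Psi_{mT}$ is tiny there one gets the genuine contraction $\|f_{m+1}\|_{2}\le(1-\gamma_{0})\|f_{m}\|_{2}$, which \emph{can} be iterated because $f_{m+1}$ is still (approximately) band-limited. Only at the very end does one restrict to $X$, using the lower bound on $\prod_{n}\Psi_{nT}$ there. Your proposal contains no mechanism of this kind, and without one the argument yields a single factor $(1-C_{*}^{-2})^{1/2}$ rather than $N^{-\beta}$.
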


\begin{proof}
We pick a nonnegative Schwarz function $\phi$ in $\R^{d}$ with $\supp(\what{\phi})\subset [-1,1]^{d}$ and $\what{\phi}(0)=1$. 
With $T\in \N$ to be determined, we set $\psi(x):= L^{Td}\phi(L^T x)$ so that $\supp(\what{\psi})\subset [-L^T,L^T]^{d}$. 
Let 
\EQ{
\calS_{n} &:=\bigcup_{\substack{Q\in \cC_{n} \\ Q\cap X\ne \emptyset}} Q \\
\calS_{n}^{*} &:= \calS_{n}+[-L^{-n}/10, L^{-n}/10]^d \\
}
and define $\Psi_{n}:= \psi_{n}\ast \one_{\calS_{n+1}^{*}}$ where $\psi_{k}(x):= L^{kd}\psi(L^{k}x)$. 
There exists a constant $C_{\phi}$ depending only on $\phi$ such that  for any $n\geq 0$,
\[
\Psi_n\geq \Big( 1-\frac{C_{\phi}}{L^{T-1}} \Big) \one_X.
\] 
Thus, for all {{$m\ge 1$}}, 
\EQ{\label{eq:PsiX}
\prod_{n=0}^{m-1} \Psi_{n}\ge \Big(1-\frac{C_{\phi}}{L^{T-1}}\Big)^{m}\one_{X}.
}
Moreover, if $Q\in\cC_{n+1}$ with $n\ge 0$ satisfies $Q\cap X=\emptyset$, denote by $Q^{*}$ the cube with the same center as $Q$, but half the side length, i.e., of side length~$L^{-(n+1)}/2$.  
Denote the collection of all such cubes $Q^*$ by $U_{n+1}$.
By the definitions of $S_{n+1}^*$ and $Q^*$, we clearly have
\[S_{n+1}^*\cap \Big(U_{n+1}+[-L^{-(n+1)}/10, L^{-(n+1)}/10]^d \Big)=\emptyset.\]
Then for $x\in U_{n+1}$, and a constant $c_{\phi}$ that depends on $\phi$ only, we have 
\EQ{\label{eq:hole small}
\Psi_n(x) & =\int_{\R^d}\phi_{n+T}(x)\one_{S_{n+1}^*}(x-y)\, dy \\
& =\int_{\R^d}\phi(y) \one_{S_{n+1}^*}(x-L^{-(n+T)}y)\, dy\\
& \leq \int_{\R^d\setminus [-L^{T-1}/10,\ L^{T-1}/10]^d}\phi(y)\, dy\leq \frac{c_{\phi}}{L^{T-1}},
}
uniformly in $n$.

Let $f\in L^{2}(\R^{d})$ with $\supp(\what{f}\,)\subset Y$. Then for $m\geq 1$,
\[
f_{m}:= \prod_{n=0}^{m-1}\Psi_{nT}\cdot f
\]
satisfies 
\EQ{
\label{eq:fmhat supp}
\supp(\what{f_{m}}) &\subset Y + \sum_{n=0}^{m-1} \supp(\what{\psi_{nT}}) \\
&\subset Y + \sum_{n=0}^{m-1} [ - L^{(n+1)T}, L^{(n+1)T}]^{d} = Y + \ell_{m}[-1,1]^{d}
}
where
\[
\ell_{m} := L^T\frac{L^{mT}-1}{L^T-1}.
\]
One has $f_{m+1}=\Psi_{mT}f_{m}$ for all $m\ge0$ with $f_{0}=f$.  
We claim that there exists $\gamma_{0}=\gamma_{0}(L,d,c_{1},c_{2},c_{3}) \in (0,1)$ with 
\EQ{\label{eq:gamma}
\| f_{m+1}\|_{L^2([-1,1]^{d})} \le (1-\gamma_{0}) \|f_{m}\|_{L^2([-1,1]^{d})}
}
Define $g_{m}(x):=  f_{m}(L^{mT}\,x)$. Then 
\EQ{\label{eq:m0}
\supp(\what{g_{m}}) 
&\subset L^{-mT}Y + \ell_{m}L^{-mT} [-1,1]^{d}\\
&\subset L^{-mT}Y+[-2,2]^d,
}
where we used
\[
\ell_{m}L^{-mT} \le   \frac{L^{T}}{L^T-1} \leq 2.
\]
In particular, assuming also that $L^{mT}\le N$, 
\EQ{\nn
\supp(\what{g_{m}}) 
\subset [-NL^{-mT},NL^{-mT}]^{d}+ [-2,2]^{d}= [-NL^{-mT}-2, NL^{-mT}+2]^d,
}
where $NL^{-mT}+2$ will be our parameter $\alpha_1$ in Corollary~\ref{cor:UPY}.

Under this rescaling, the cubes in $\cC_{mT}$ turn into unit cubes.
Assuming further $L^{mT+1}\leq N$,
the porosity condition at scale $L$ with depth $mT$
ensures that we always have a ``missing cube''  of side length $ L^{-1}$ inside. In view of our definition of $Q^*$, we only use the concentric cube of half that side length.  In view of the conditions on $Y$ in the theorem we can apply Corollary~\ref{cor:UPY} to $g_{m}$ to obtain the following: with all norms being taken locally on $[-1,1]^{d}$, and with $U_{mT+1}$, the missing cubes of the next generation as above, 
\EQ{\label{eq:gamma-1}
\| \Psi_{mT}f_{m} \|_{2}^{2} 
&\le \|\Psi_{mT}\|_{\infty}^{2} \| f_{m}\|_{L^{2}([-1,1]^{d}\setminus U_{mT+1})}^{2}
+\|\Psi_{mT}\|_{L^\infty(U_{mT+1})}^{2} \| f_{m}\|_{L^{2}(U_{mT+1})}^{2}\\
&=\|\Psi_{mT}\|_{\infty}^{2} \| f_{m}\|_{L^{2}([-1,1]^{d})}^2
-(1-\|\Psi_{mT}\|_{L^\infty(U_{mT+1})}^{2})\| f_{m}\|_{L^{2}(U_{mT+1})}^{2}\\
&\le \Big(1-C_{*}^{-2}\big(1-{c_{\phi}^2}/{L^{2(T-1)}}\big) \Big)\|f_{m}\|_{2}^{2}
}
To obtain this estimate, we used that 
\[\|\Psi_{mT}\|_{\infty}\le1,\quad \|\Psi_{mT}\|_{L^\infty(U_{mT+1})}\le \frac{c_{\phi}}{L^{T-1}},\] 
and 
\EQ{\nn
\| f_{m}\|_{L^{2}(U_{mT+1})}   & \geq C_*^{-1} \| f_{m}\|_{2}^{2},
}
with $C_{*}=C_{*}(d,L,c_{2},c_{3},\alpha)$ by Corollary~\ref{cor:UPY}.
Choosing
\EQ{\label{def:gamma0}
\gamma_0(T):=\frac{1-{c_{\phi}^2}/{L^{2(T-1)}}}{2C_*^2},
}
and using $(1-x)^{1/2}\leq 1-x/2$ for $0\leq x\leq 1$, we have
\[\Big(1-C_{*}^{-2}\big(1-{c_{\phi}^2}/{L^{2(T-1)}}\big)\Big)^\frac12\leq 1-\gamma_0(T).\]
This establishes the claim~\eqref{eq:gamma}.
 
Applying \eqref{eq:gamma} iteratively and using \eqref{eq:PsiX}, we obtain
\EQ{\label{eq:Xf'}
\|f\|_{L^{2}(X)} & \le \Big(1-\frac{C_{\phi}}{L^{T-1}}\Big)^{-(m+1)} \|\prod_{n=0}^{m} \Psi_n f\|_{L^2(X)} \\
&\le \Big[\Big(1-\frac{C_{\phi}}{L^{T-1}}\Big)^{-1} (1-\gamma_{0}(T)) \Big]^{m+1}  \|f\|_{2} \\
& \le \Big(1-\frac{\gamma_{0}(T)}{2}\Big)^{m+1} \|f\|_{2}.
}
In the last inequality we used
\EQ{\nn
1-\gamma_0(T)\leq 1-\frac{\gamma_0(T)}{2}-\frac{C_{\phi}}{L^{T-1}}\leq \Big(1-\frac{\gamma_0(T)}{2}\Big) \Big(1-\frac{C_{\phi}}{L^{T-1}}\Big),
} 
which requires
\EQ{\label{def:T} 
&L^{T-1}-\frac{c_{\phi}^2}{L^{T-1}}\geq 4C_{\phi} C_*^2,\ \ \text{or}\\
&T\geq T_0(d,L,c_2,c_3,\alpha):=\Bigg\lceil \frac{\log(2C_{\phi}C_*^2+\sqrt{4C_{\phi}^2 C_*^4 +c_{\phi}^2})}{\log L} \Bigg\rceil.
}
Finally, for any $T\geq T_0$, taking $m\in \N$ be such that $L^{mT+1}\leq N<L^{(m+1)T+1}$, \eqref{eq:Xf'} yields \eqref{eq:FUP} with
\EQ{\label{eq:Xf}
\beta=-\frac{\log{(1-\gamma_0(T)/2)}}{T\log{L}},
}
and
\EQ{\label{eq:XfC}
\tilde{C}=\Big(1-\frac{\gamma_0(T)}{2}\Big)^{-1/T}.
}
as claimed.
In the current theorem, we could simply choose $T=T_0$.
The flexibility of choosing $T$ will simplify our computations in our proof of Theorem \ref{thm:main1}.
\end{proof}

\section{Geometry of $Y$ and damping functions} \label{sec:Y}

\subsection{Regular sets}
We will call a set $I=[a_1,b_1]\times [a_2,b_2]\times \cdots \times [a_d,b_d]$ of equal side length a {\em $d$-dimensional cube} in $\R^d$, we denote its side length by $r_I$.

Recall the notion of $\delta$-regularity from \cite[Definition~1.1]{BD}, below is a $d$-dimensional analogy.

\begin{definition}\label{def:deltareg} 
Suppose $X\subset\R^d, X\ne\emptyset$ is closed, and $0<\delta<d$, $C_{R}\ge1$, $0\le \alpha_{0}\le \alpha_{1}\le \infty$. 
Then $X$ is $\delta$-regular on scales $\alpha_{0}$ to $\alpha_{1}$, with constant $C_{R}$, if there exists a Borel measure $\mu_{X}$ with the following properties:
\begin{itemize}
\item $\mu_{X}$ is supported on $X$
\item $\mu_{X}(I)\le C_{R} r_I^{\delta}$ for each $d$-dimensional cube $I$ of side length $\alpha_{0}\le r_I \le \alpha_{1}$
\item $\mu_{X}(I)\ge C_{R}^{-1} r_I^{\delta}$ for each $d$-dimensional cube $I\subset\R^d$, centered at a point in $X$ and of side length 
$\alpha_{0}\le r_I \le \alpha_{1}$
\end{itemize}
\end{definition}
See~\cite[Section 2.2]{BD} for the geometry of such sets in $\R$. 
Loosely speaking, they behave like $\delta$-dimensional fractal sets. 
The properties of $\delta$-regular sets carry over to higher dimensions. 
We include some properties in Appendix \ref{sec:appregular}.

\subsection{Geometry of $Y$ and damping functions}\label{sec:constructionofdamping}

Bourgain and Dyatlov observed that $\delta$-regular sets on $\R$ admit damping functions as in Definition~\ref{def:damp} above with $\alpha=(1+\delta)/2$. They obtained these functions as a consequence of the Beurling-Malliavin theorem~\cite{BM}. However, one does not need the full strength of this theorem. To be more precise, in place of the original Beurling-Malliavin condition $\|(\log \omega)'\|_{\infty}<\infty$,   with $\omega$ the weight,  a much easier proof is possible (via outer functions) if we assume instead that $\|(H \log \omega)'\|_{\infty}\ll 1$ where $H$ is the Hilbert transform on~$\R$, see~\cite[Section 1.14, Theorem~1]{seven}.  By means of this technique, Jin and 
Zhang~\cite[Lemma~4.1]{JZ} proved the following quantitative result on damping functions. 
\begin{lemma}
\label{lem:JZ}
Let $Y\subset [-N,N]$ be $\delta_1$-regular on scales $2$ to $N$, with constant $C_{R}$, $0<\delta_1<1$. For any $0<c_{1}< 1$,  $Y$ admits a damping function with $\alpha=(1+\delta_1)/2$ and parameters $c_{1}$,
\EQ{\label{eq:c2c3}
c_{2}= \iota\, c_{1}^{6},\quad c_{3}= \iota\, c_{1} C_{R}^{-2}\delta_1(1-\delta_1),
}
where $\iota>0$ is some {{small}} absolute constant.  Instead of the pointwise global decay of $\langle \xi\rangle^{-1}$ in Definition~\ref{def:damp}, we have 
\EQ{
\label{eq:global c3decay}
|\what{\psi}(\xi)|\le \exp(-c_{3}\, \la \xi\ra^{\frac12})\quad \forall\; \xi\in\R
}
\end{lemma}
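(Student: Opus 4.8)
The target statement is Lemma~\ref{lem:JZ}, a one-dimensional result asserting that a $\delta_1$-regular set $Y \subset [-N,N]$ admits a damping function in the sense of Definition~\ref{def:damp} (restricted to $d=1$, with $\Theta(\xi) = (\log(2+|\xi|))^{-\alpha}$ and $\alpha = (1+\delta_1)/2$), with the explicit parameters in~\eqref{eq:c2c3} and the stronger global decay~\eqref{eq:global c3decay}. Since this is quoted directly from~\cite[Lemma~4.1]{JZ}, the "proof" is really a matter of recalling the Jin--Zhang construction and tracking the constants; the plan is to reproduce that argument. First I would introduce the weight $\omega(\xi) = \exp\bigl(c_3 \Theta(|\xi|)|\xi| \cdot \one_Y(\xi) + c_3 \langle\xi\rangle^{1/2}\bigr)$ (or rather a smoothed majorant of the indicator-type weight supported on a neighborhood of $Y$), the point being that $\omega$ grows like $e^{c_3 |\xi|/(\log|\xi|)^\alpha}$ on $Y$ but only like $e^{c_3|\xi|^{1/2}}$ off $Y$. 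The existence of a function $\psi$ with $\supp\psi \subset [-c_1,c_1]$, $\|\widehat\psi\|_{L^2([-1,1])} \gtrsim c_1^6$, and $|\widehat\psi| \le \omega^{-1}$ pointwise is then exactly what the outer-function (weak Beurling--Malliavin) machinery delivers, provided the logarithmic integral $\int \frac{\log\omega(\xi)}{1+\xi^2}\,d\xi$ converges and the Hilbert-transform condition $\|(H\log\omega)'\|_\infty \ll 1$ holds.

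The heart of the matter, and the step I expect to be the main obstacle, is verifying the condition $\|(H\log\omega)'\|_\infty \le $ (small) — equivalently, controlling the Hilbert transform of the "bump profile" $\log\omega$ that is concentrated on $Y$. Here is where $\delta_1$-regularity enters decisively: the function $\log\omega$ restricted to $Y$ is, up to the slowly varying factor $\Theta$, essentially $c_3 |\xi|$ times the indicator of $Y$, and one must show that because $Y$ is a $\delta_1$-regular (hence porous, Ahlfors-type) set with $\delta_1 < 1$, the set $Y$ is "thin enough" at every scale that the Hilbert transform of this profile has small derivative. Concretely one builds a majorizing weight whose logarithm is a sum over dyadic scales $2^j \le N$ of functions each morally of the form $c_3 2^j (\log 2^j)^{-\alpha}$ times a smoothed indicator of the union of $\delta_1$-regular pieces at scale $2^j$; the number of such pieces is $\lesssim C_R 2^{j\delta_1}$, each of length $\lesssim 1$, so the total "mass" at scale $2^j$ contributes $\lesssim C_R 2^{j\delta_1} \cdot 2^{j}(\log 2^j)^{-\alpha} \cdot 2^{-j} = C_R 2^{j\delta_1}(\log 2^j)^{-\alpha}$ to $\int\log\omega/(1+\xi^2)$ after division by $2^{2j}$ — wait, more carefully, the logarithmic integral over scale $2^j$ is $\lesssim C_R 2^{j\delta_1} \cdot (\text{length }1) \cdot \frac{2^j(\log 2^j)^{-\alpha}}{2^{2j}} = C_R 2^{j(\delta_1-1)}(\log 2^j)^{-\alpha}$, which sums in $j$ precisely because $\delta_1 < 1$. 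The choice $\alpha = (1+\delta_1)/2 \in (\delta_1, 1)$ is what makes both the logarithmic integral converge with room to spare and the Hilbert-transform derivative bound go through; the factor $\delta_1(1-\delta_1)$ in $c_3$ reflects the margin $1-\delta_1$ available in the geometric series and the normalization $C_R^{-1}$ available at the lower end.

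Having produced $\psi$ with $|\widehat\psi| \le \omega^{-1}$, the remaining bookkeeping is routine: on $Y$ this gives $|\widehat\psi(\xi)| \le \exp(-c_3 \Theta(|\xi|_1)|\xi|_1)$, which is the fourth bullet of Definition~\ref{def:damp}; the global bound $|\widehat\psi(\xi)| \le \exp(-c_3\langle\xi\rangle^{1/2})$ is~\eqref{eq:global c3decay}, which in particular implies the third bullet $|\widehat\psi(\xi)| \le \langle\xi\rangle^{-1}$ once $\langle\xi\rangle$ is large and can be arranged for small $\langle\xi\rangle$ by the normalization; the support condition $\supp\psi \subset [-c_1,c_1]$ is built in; and the lower bound $\|\widehat\psi\|_{L^2([-1,1])} \ge c_2 = \iota c_1^6$ comes from the standard quantitative lower bound for outer functions (one cannot have $\widehat\psi$ too small near the origin if $\psi$ is normalized in $L^2$ and compactly supported, the power $c_1^6$ being the explicit loss in the Jin--Zhang estimate). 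I would then simply cite~\cite[Lemma~4.1]{JZ} and~\cite[Section~1.14, Theorem~1]{seven} for the precise execution of these steps, since the one-dimensional construction is not original to this paper. The only genuine content on our side is noting that $\delta_1$-regularity on scales $2$ to $N$ (rather than $1$ to $N$, or on all scales) suffices, which is immediate because the dyadic decomposition above only ever uses the upper regularity bound $\mu_Y(I) \le C_R r_I^{\delta_1}$ at scales between $O(1)$ and $N$.
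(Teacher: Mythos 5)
Your proposal follows the same route as the paper: Lemma~\ref{lem:JZ} is quoted verbatim from \cite[Lemma~4.1]{JZ}, and the paper, like you, defers the construction of the weight $\omega$ and the outer-function multiplier theorem to \cite{JZ} and \cite{seven}. Your sketch of the mechanism is accurate: build a weight whose logarithm is $\asymp c_3\Theta(|\xi|)|\xi|$ on a neighborhood of $Y$ and $\asymp c_3\langle\xi\rangle^{1/2}$ off it, check $\mathcal{L}(\omega)>-\infty$ by the dyadic computation (summable precisely because $\delta_1<1$ and $\alpha>\delta_1$), check that $\|\mathcal{H}(\log\omega)'\|_\infty$ is $O(c_3\,C_R^{2}/(\delta_1(1-\delta_1)))$ and hence small for the stated $c_3$, and invoke the outer-function theorem. (For what it is worth, the paper does carry out a self-contained variant of the last step in Appendix~\ref{sec:app}, proving Lemma~\ref{lem:regularsetdamping} with a pointwise lower bound on $[-3/4,3/4]$ via a modified multiplier theorem, but it still cites \cite{JZ} for the weight with the Hilbert-transform bound.)

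One concrete error in your closing remark: it is not true that the argument ``only ever uses the upper regularity bound $\mu_Y(I)\le C_R r_I^{\delta_1}$.'' The upper bound alone constrains the measure $\mu_Y$, not the set $Y$; to conclude that $Y\cap\{|\xi|\sim 2^j\}$ is covered by $\lesssim C_R^2\,2^{j\delta_1}$ unit intervals you must also use the lower bound $\mu_Y(I)\ge C_R^{-1}r_I^{\delta_1}$ for unit intervals centered on $Y$ (each such interval carries mass at least $C_R^{-1}$, so their number is at most $C_R^{-1}\cdot$total mass$^{-1}$\dots i.e.\ at most $C_R\cdot(2^{j+2})^{\delta_1}\cdot C_R = C_R^2 2^{(j+2)\delta_1}$). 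This is exactly why $C_R^{2}$, rather than $C_R$, appears in $c_3=\iota\,c_1C_R^{-2}\delta_1(1-\delta_1)$, and why the hypothesis of the lemma requires $\delta_1$-regularity (both bounds) rather than mere Frostman-type upper regularity. The rest of your bookkeeping (the role of $\alpha=(1+\delta_1)/2$, the smallness of $\|\mathcal{H}(\log\omega^{c_3})'\|_\infty = c_3\|\mathcal{H}(\log\omega)'\|_\infty$ forcing the stated form of $c_3$, and the $c_1^6$ loss in the $L^2$ lower bound) is consistent with \cite{JZ}.
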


In this paper we need a slightly different version, where we have pointwise lower bound of $|\what{\psi}(\xi)|$ on $[-3/4,3/4]$.
The advantage of a pointwise lower bound over a $L^2$ bound is that it leads to a lower bound of the product of several $\what{\psi}$'s.

\begin{lemma}\label{lem:regularsetdamping}
Assume that $Y\subset [-N, N]$ is a $\delta_1$-regular set with constant $C_R$ on scales $2$ to $N$ and $\delta_1\in (0,1)$. 
Fix $0<c_1<1$, then there exists a function $\psi\in L^2(\R)$ such that 
\EQ{\nn
&\supp \psi\subset \left[-\frac{c_1}{10},\frac{c_1}{10}\right],\\
&|\what{\psi}(\xi)|\leq \exp(-c_3 \la \xi\ra^{1/2}),\ \ \forall \xi \in \R,\\
&|\what{\psi}(\xi)|\leq \exp(-c_3 \Theta(|\xi |) |\xi |),\ \ \forall \xi \in Y,\ |\xi |\geq 10,
}
and 
\EQ{\label{eq:psi c2}
|\what{\psi}(\xi)|\geq c_2,\ \ \forall \xi\in [-3/4,3/4],
}
with 
\[\alpha=(1+\delta_1)/2,\ \ c_2=\iota\, c_1^{10},\ \ c_3=\iota\, c_1 C_R^{-2} \delta_1(1-\delta_1),\]
where $\iota>0$ is some {{small}} absolute constant.
\end{lemma}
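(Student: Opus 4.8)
The plan is to deduce Lemma~\ref{lem:regularsetdamping} from the Jin--Zhang construction in Lemma~\ref{lem:JZ} by a small modification that upgrades the $L^2$ lower bound $\|\what\psi\|_{L^2([-1,1])}\ge c_2$ to a pointwise lower bound $|\what\psi(\xi)|\ge c_2$ on the slightly smaller interval $[-3/4,3/4]$. First I would recall that the damping function produced in~\cite{JZ} has the form $\psi = \psi_0 \chi$ (or a convolution/product structure) where $\what{\psi_0}$ is an outer function with controlled modulus on $Y$, and $\chi$ is a fixed bump supported in a small interval which localizes the support of $\psi$ and whose Fourier transform is comparable to $1$ near the origin. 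The $L^2$ lower bound on $[-1,1]$ comes from the fact that $\what\psi(0)$ is bounded below; what one actually wants is that $\what\psi$ does not oscillate or decay too fast on the $O(1)$-scale, so that a lower bound at one point propagates to a whole neighborhood.

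The key steps, in order: (1) Take the Jin--Zhang damping function $\psi_{\mathrm{JZ}}$ for $Y$ with parameter $c_1/20$ (say), so $\supp\psi_{\mathrm{JZ}}\subset[-c_1/20,c_1/20]$, $|\what{\psi_{\mathrm{JZ}}}(\xi)|\le\langle\xi\rangle^{-1}$, $|\what{\psi_{\mathrm{JZ}}}(\xi)|\le\exp(-c_3\Theta(|\xi|)|\xi|)$ on $Y$ and $\|\what{\psi_{\mathrm{JZ}}}\|_{L^2([-1,1])}\ge c_2'=\iota (c_1/20)^6$, together with the global decay $|\what{\psi_{\mathrm{JZ}}}(\xi)|\le\exp(-c_3\langle\xi\rangle^{1/2})$. (2) Convolve: set $\psi := \psi_{\mathrm{JZ}} * \phi_0$ where $\phi_0$ is a fixed smooth nonnegative bump with $\supp\phi_0\subset[-c_1/20,c_1/20]$ and $\what{\phi_0}\ge 0$, $\what{\phi_0}(0)=1$; then $\supp\psi\subset[-c_1/10,c_1/10]$ and $\what\psi=\what{\psi_{\mathrm{JZ}}}\,\what{\phi_0}$, which only improves all the upper bounds (since $|\what{\phi_0}|\le 1$). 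Actually the cleaner route is to observe $\what\psi\ge 0$ is \emph{not} automatic, so instead I would argue as follows. (3) The true point is that $\what{\psi_{\mathrm{JZ}}}$ is the Fourier transform of an $L^1$ function supported in a tiny interval, hence extends to an entire function of exponential type $\le 2\pi c_1/20$, and moreover $|\what{\psi_{\mathrm{JZ}}}|\le 1$ on $\R$. By a Bernstein-type estimate, $|\what{\psi_{\mathrm{JZ}}}'(\xi)|\lesssim c_1$ on $\R$, so $\what{\psi_{\mathrm{JZ}}}$ is Lipschitz with a small constant; combined with $\|\what{\psi_{\mathrm{JZ}}}\|_{L^2([-1,1])}\ge c_2'$ this forces $|\what{\psi_{\mathrm{JZ}}}(\xi_*)|\ge c_2'/2$ at some $\xi_*\in[-1,1]$, and then, since $\what{\psi_{\mathrm{JZ}}}$ can still vanish near $\xi_*$, I need one more device. (4) The device: take $\psi(x) := \psi_{\mathrm{JZ}}(x)\, e^{2\pi i \xi_* x}$? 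No — that shifts the spectral window off $Y$. Instead, average: replace $\psi_{\mathrm{JZ}}$ by $\psi(x):=\psi_{\mathrm{JZ}}(x)\,\overline{\psi_{\mathrm{JZ}}(x)}$ rescaled, i.e. $\psi:=|\psi_{\mathrm{JZ}}|^2$ up to normalization. Then $\what\psi = \what{\psi_{\mathrm{JZ}}}*\what{\overline{\psi_{\mathrm{JZ}}}}$, which is \emph{nonnegative} (it is $|\mathcal F[\psi_{\mathrm{JZ}}]|^2$ convolved appropriately — more precisely $\what\psi(\xi)=\int \what{\psi_{\mathrm{JZ}}}(\eta)\overline{\what{\psi_{\mathrm{JZ}}}(\eta-\xi)}\,d\eta$), so $\what\psi\ge 0$ is false in general too. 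The genuinely clean choice is $\psi := \psi_{\mathrm{JZ}} * \widetilde{\psi_{\mathrm{JZ}}}$ where $\widetilde{g}(x)=\overline{g(-x)}$: then $\what\psi = |\what{\psi_{\mathrm{JZ}}}|^2 \ge 0$, $\supp\psi\subset[-c_1/10,c_1/10]$, the global decay and the decay on $Y$ are \emph{squared} (so still valid after absorbing a factor $2$ into $c_3$), the bound $|\what\psi(\xi)|\le\langle\xi\rangle^{-2}\le\langle\xi\rangle^{-1}$ holds, and now $\what\psi\ge 0$ is continuous, so the $L^2$ lower bound $\|\what\psi\|_{L^2([-1,1])}=\|\what{\psi_{\mathrm{JZ}}}\|_{L^4([-1,1])}^2\gtrsim (c_2')^2$ combined with the Lipschitz bound (Bernstein, since $\what\psi$ has exponential type $\le 2\pi c_1/10$ and is bounded) propagates a pointwise lower bound from a near-maximizing point to all of $[-3/4,3/4]$: if $|\what\psi(\xi_0)|\ge c$ for some $\xi_0\in[-1,1]$ and $|\what\psi'|\le Ac_1 \le c/(2\cdot 1/4)$ on $[-1,1]$, then $\what\psi\ge c/2$ on an interval of radius $1/4$ around $\xi_0$; but this gives a lower bound only near $\xi_0$, not on all of $[-3/4,3/4]$.

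The remaining gap — getting a lower bound on the \emph{whole} interval $[-3/4,3/4]$ rather than near one point — is the main obstacle, and I would close it as follows. The Fourier transform $\what{\psi_0}$ of the fixed bump $\phi_0$ (chosen with $\what{\phi_0}\ge c_4>0$ on $[-1,1]$, which is possible: e.g. $\phi_0=$ a scaled Fejér kernel, whose Fourier transform is a nonnegative triangle bounded below on a fixed interval) satisfies $\what{\phi_0}\ge c_4$ on $[-3/4,3/4]$. Now take $\psi:=\phi_0 \cdot (\psi_{\mathrm{JZ}}*\widetilde{\psi_{\mathrm{JZ}}})$? That destroys nonnegativity of $\what\psi$ again. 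The correct final assembly: let $\psi := \phi_0 * (\psi_{\mathrm{JZ}}*\widetilde{\psi_{\mathrm{JZ}}})$ is still only $\ge 0$ in Fourier if $\what{\phi_0}\ge 0$, which holds for a Fejér-type bump — \emph{and} a Fejér kernel's transform is bounded below on a fixed interval. So with $\psi := \phi_0 * \psi_{\mathrm{JZ}} * \widetilde{\psi_{\mathrm{JZ}}}$ we get $\what\psi = \what{\phi_0}\,|\what{\psi_{\mathrm{JZ}}}|^2 \ge 0$, support in $[-c_1/10,c_1/10]$ if all three pieces are supported in $[-c_1/30,c_1/30]$, the upper bounds all hold (each factor is $\le 1$ and $|\what{\psi_{\mathrm{JZ}}}|^2$ carries the decay on $Y$ and the global $\exp(-c_3\langle\xi\rangle^{1/2})$ decay after renaming constants), and crucially $\what\psi(\xi)\ge c_4 |\what{\psi_{\mathrm{JZ}}}(\xi)|^2$. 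It still remains to bound $|\what{\psi_{\mathrm{JZ}}}(\xi)|$ \emph{below} pointwise on $[-3/4,3/4]$, not just in $L^2$ — and here the genuinely necessary input is that the Jin--Zhang outer-function construction actually produces $\what{\psi_{\mathrm{JZ}}}$ with no zeros and a quantitative lower bound $|\what{\psi_{\mathrm{JZ}}}(\xi)|\ge c_2''$ on a fixed neighborhood of $0$; an outer function has $\log|\what{\psi_{\mathrm{JZ}}}|$ equal to the Poisson integral of its boundary modulus, and the boundary modulus (the weight $\omega$) is bounded below on bounded sets, so $|\what{\psi_{\mathrm{JZ}}}|$ is bounded below on $[-3/4,3/4]$ by a constant depending polynomially on $c_1$. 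I expect the hard part to be exactly this: tracking through the outer-function construction of~\cite{JZ}/\cite{seven} to extract the explicit polynomial-in-$c_1$ pointwise lower bound $c_2 = \iota c_1^{10}$ (the exponent $10$ versus $6$ reflecting the extra convolutions/squaring), rather than any of the routine support and decay bookkeeping, which follows from the convolution structure and Bernstein's inequality exactly as sketched. I would therefore organize the proof as: (a) recall the outer-function $\psi_{\mathrm{JZ}}$ and note it is zero-free with $\log|\what{\psi_{\mathrm{JZ}}}|$ a Poisson integral, giving $|\what{\psi_{\mathrm{JZ}}}(\xi)|\ge \iota c_1^{k_1}$ on $[-3/4,3/4]$; (b) form $\psi:=\phi_0*\psi_{\mathrm{JZ}}*\widetilde{\psi_{\mathrm{JZ}}}$ with $\phi_0$ a scaled Fejér bump; (c) verify support, the global $\exp(-c_3\langle\xi\rangle^{1/2})$ decay, the decay $\exp(-c_3\Theta(|\xi|)|\xi|)$ on $Y$ for $|\xi|\ge 10$, and $|\what\psi|\le\langle\xi\rangle^{-1}$, all by multiplicativity under convolution and by absorbing constants into $c_3$; (d) conclude $|\what\psi(\xi)|\ge c_4 (\iota c_1^{k_1})^2 \ge \iota c_1^{10}$ on $[-3/4,3/4]$, relabeling $\iota$.
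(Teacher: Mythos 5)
You have correctly located the crux --- everything hinges on upgrading the Jin--Zhang $L^2$ lower bound to a pointwise lower bound for the outer function itself on a fixed interval --- but the argument you offer to close it does not work, and this is exactly the point where one must re-enter the construction rather than quote Lemma~\ref{lem:JZ} as a black box. Your claim is that since $\log|\what{\psi_{\mathrm{JZ}}}|$ is governed by a Poisson integral and the weight $\omega$ is bounded below on bounded sets, $|\what{\psi_{\mathrm{JZ}}}|$ is bounded below on $[-3/4,3/4]$. But the modulus of the multiplier on $\R$ is not $\omega$ (or $\omega_0=\omega/(x^2+T^2)^5$); it is $\tfrac13 e^{-M}\omega_0$ with $M=\mathcal{H}(s)$, where $s=s_0-\pi k-\tfrac{\pi}{2}$ contains an integer-valued staircase $k(x)$ introduced precisely to make $\tilde\omega^2e^{2\pi i\sigma x}$ outer. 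At each jump of $k$ the function $s$ has a jump of size $\pi$, so $M=\mathcal{H}(s)$ blows up logarithmically there and $e^{-M}$ vanishes on one side; in the unmodified construction of \cite{JZ} these jump points can sit anywhere in $[-1,1]$, including inside $[-3/4,3/4]$, which is exactly why \cite{JZ} only obtain an $L^2$ lower bound. No amount of convolving with Fej\'er bumps or forming $\psi_{\mathrm{JZ}}*\widetilde{\psi_{\mathrm{JZ}}}$ repairs this, since $|\what{\psi}|$ inherits the zeros of $|\what{\psi_{\mathrm{JZ}}}|$; and, as you note yourself, the Bernstein--Lipschitz propagation from a near-maximizing point only reaches an interval of radius $O(\iota^2c_1^{11})$, far short of $[-3/4,3/4]$.

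The missing idea, which is the content of Theorem~\ref{thm:multiplier}, is to modify the definition of the staircase: one chooses between $k=\lfloor s_0/\pi\rfloor$ and $k=\lfloor s_0/\pi-\tfrac12\rfloor$ according to where $s_0(0)/\pi$ lies modulo $1$, and uses the smallness $\|s_0'\|_{L^\infty}\le 2\pi\sigma<\pi/5$ to conclude that the chosen branch has no jump on $[-5/4,5/4]$. With $k$ locally constant, the local part $J_1$ of $\mathcal{H}(s)$ is $O(\sigma)$ on $[-3/4,3/4]$, one gets $|M|< 7$ there, and the pointwise bound $|\what\psi|=\tilde\omega\ge\tfrac13(1+T^2)^{-5}e^{-7}\omega\gtrsim\sigma^{10}\omega$ follows directly from the outer-function representation --- no nonnegativization of $\what\psi$, no convolution with $\widetilde{\psi_{\mathrm{JZ}}}$, and no Fej\'er kernel are needed, since only a lower bound on $|\what\psi|$ (not on $\what\psi$) is required. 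Note also that the exponent $10$ in $c_2=\iota c_1^{10}$ comes from the factor $(x^2+T^2)^{-5}$ with $T\sim\sigma^{-1}$, not from any squaring. So your plan as written cannot be completed without precisely the modification that constitutes the paper's proof.
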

We include the proof of Lemma \ref{lem:regularsetdamping} in Appendix \ref{sec:app}.

In higher dimensions, we reduce ourselves to this one-dimensional setting by taking finite unions of products. For simplicity, we restrict ourselves to two dimensions, although the exact analogue can be done in any finite dimension. 

\begin{definition}
\label{def:Yadmissible} 
Pick some $\eps_{0}\in (0,1)$ and let $Y\subset [-N,N]^{2}$ with $N\ge10$ be of the form
\EQ{\label{eq:Ycover}
Y&\subset \bigcup_{j=1}^{m} Y_{j},\\
 Y_{j} & =  \{ \xi_{1}\vec e_{j,1} + \xi_{2}\vec e_{j,2}\::\: \xi_{i}\in Y_{j,i}, \; i=1,2\}
}
Here $\vec e_{j,i}\in \Sph^{1}$ with $ | \vec e_{j,1}\cdot \vec e_{j,2}|<1-\eps_{0}$ for all $1\le j\le m$, and $Y_{j,i}\subset [-2N,2N]$ are  $\delta_1$-regular on scales $2$ to $N$, with constant $C_{R}$, $0<\delta_1<1$.
In that case $Y$ is called {\em admissible at scale $N$} with parameters $\delta_1, C_{R},\eps_{0},m$.  In general dimensions, we require that $\vec e_{j,i}$ are unit vectors with $|\det(\vec e_{j,1},\ldots,\vec e_{j,d})|\ge \eps_0$, cf.~\eqref{eq:YYj}. 
\end{definition}

Throughout, we will freeze $\eps_{0}$ and constants are allowed to depend on it. 
These admissible sets carry damping functions. 

{{We note that for our proof of Theorem~\ref{thm:main1}, we only need $m=1$. 
We give a construction with arbitrary $m\geq 1$ here, since the construction itself may be of independent interest.}}

\begin{lemma}\label{lem:admissibledamping}
Let $Y\subset [-N,N]^{2}$ be admissible as in Definition~\ref{def:Yadmissible}. Then $Y$ admits a damping function with 
parameters $c_{1}$, 
\EQ{ \nn 
c_{2} &= \iota^{2m+4} c_{1}^{20m+4}\, m^{-20m}\, C_{R}^{-8}(\delta_1(1-\delta_1))^{4} \\
c_{3} &=\iota\,  c_{1}\,m^{-1} C_{R}^{-2}\delta_1(1-\delta_1) 
}
where $\iota>0$ {{is a small constant that}} depends on $\eps_{0}$. 
\end{lemma}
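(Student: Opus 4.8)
The plan is to construct the damping function for an admissible $Y$ by multiplying together the one-dimensional damping functions from Lemma~\ref{lem:regularsetdamping}, one factor for each of the $m$ products $Y_j$, after composing each with the linear change of coordinates adapted to the frame $\{\vec e_{j,1},\vec e_{j,2}\}$. Concretely, for each $j$ let $\psi_{j,i}$ (with $i=1,2$) be the one-dimensional damping function associated to the regular set $Y_{j,i}$ with $c_1$ replaced by a smaller value $c_1'= c_1/(Cm)$ (to be fixed), and set $\Psi_j(x) := \psi_{j,1}(x\cdot\vec f_{j,1})\,\psi_{j,2}(x\cdot\vec f_{j,2})$ where $\{\vec f_{j,1},\vec f_{j,2}\}$ is the dual basis to $\{\vec e_{j,1},\vec e_{j,2}\}$; since $|\vec e_{j,1}\cdot\vec e_{j,2}|<1-\eps_0$ the Gram matrix is bounded above and below in terms of $\eps_0$, so the dual basis vectors have length $\le C(\eps_0)$ and the Jacobian of the change of variables is $\ge c(\eps_0)$. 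Then $\widehat{\Psi_j}(\xi)$ is (up to the Jacobian constant) the product $\widehat{\psi_{j,1}}(\langle\xi,\vec e_{j,1}\rangle)\,\widehat{\psi_{j,2}}(\langle\xi,\vec e_{j,2}\rangle)$ evaluated in the appropriate coordinates, so on $Y_j$ — where $\xi = \xi_1\vec e_{j,1}+\xi_2\vec e_{j,2}$ with $\xi_i\in Y_{j,i}$ — it decays like $\exp(-c_3(\Theta(|\xi_1|)|\xi_1| + \Theta(|\xi_2|)|\xi_2|))$, and since $|\xi_1|+|\xi_2|\gtrsim_{\eps_0}|\xi|$ and $\Theta$ is slowly varying this is bounded by $\exp(-c_3'\Theta(|\xi|_1)|\xi|_1)$ for a suitable $c_3'\sim \iota c_1 C_R^{-2}\delta_1(1-\delta_1)$; the global $\langle\xi\rangle^{-2}$ bound follows from the stronger $\exp(-c_3'\langle\xi\rangle^{1/2})$-type decay of each factor. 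Finally I would take $\psi := \prod_{j=1}^m \Psi_j$ (a product of $2m$ one-dimensional factors composed with linear maps). Its support lies in a cube of side $\lesssim_{\eps_0} c_1'$, which we arrange to be $\le c_1$ by the choice of $c_1'$; on each $Y_j$ it inherits the decay of the corresponding $\Psi_j$ factor times the trivial bound $|\widehat{\Psi_k}|\le 1$ for $k\ne j$, and since $Y\subset\bigcup_j Y_j$ the fourth bullet of Definition~\ref{def:damp} holds on all of $Y$ with $c_3 = \iota c_1 m^{-1}C_R^{-2}\delta_1(1-\delta_1)$ (the $m^{-1}$ absorbing both the shrinkage $c_1'\sim c_1/m$ and the fact that only one factor does the work).

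The genuinely delicate point — and the main obstacle — is the lower bound $\|\widehat{\psi}\|_{L^2([-1,1]^d)}\ge c_2$, i.e. establishing that the product $\widehat{\psi}$ does not vanish (or become tiny) on the unit cube even though it is a product of $2m$ functions each of which is only known to be $\ge c_2^{(1)}:=\iota (c_1')^{10}$ on $[-3/4,3/4]$ in its own variable. This is exactly why Lemma~\ref{lem:regularsetdamping} was stated with a \emph{pointwise} lower bound rather than an $L^2$ one: the pointwise bound multiplies. Here is the argument I would run. For a single $Y_j$, on the set of $\xi$ with $|\langle\xi,\vec e_{j,i}\rangle|\le 3/4$ for $i=1,2$ — which, because the frame is $\eps_0$-nondegenerate, contains a ball $B(0,c(\eps_0))$ — we get $|\widehat{\Psi_j}(\xi)|\ge c(\eps_0)(c_2^{(1)})^2$ pointwise. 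For the full product $\psi=\prod_j\Psi_j$, we need a common ball on which \emph{all} $\Psi_j$ are simultaneously bounded below; this holds on $B(0,c(\eps_0))$ by the same reasoning applied to each $j$, giving $|\widehat\psi(\xi)|\ge (c(\eps_0))^m (c_2^{(1)})^{2m}$ on a fixed ball, hence $\|\widehat\psi\|_{L^2([-1,1]^d)}\ge \|\widehat\psi\|_{L^2(B(0,c(\eps_0)))}\ge c(\eps_0)^{m}(c_2^{(1)})^{2m}\cdot |B(0,c(\eps_0))|^{1/2}$. Substituting $c_1'\sim \iota(\eps_0) c_1/m$ into $c_2^{(1)}=\iota(c_1')^{10}$ and collecting powers produces precisely the stated $c_2 = \iota^{2m+4}c_1^{20m+4}m^{-20m}C_R^{-8}(\delta_1(1-\delta_1))^4$ — the $m^{-20m}$ from $(c_1/m)^{20m}$, the $C_R^{-8}(\delta_1(1-\delta_1))^4$ presumably entering through a more careful version of Lemma~\ref{lem:regularsetdamping} in which the constant $c_2$ also carries these factors, and the $\iota^{2m+4}$ absorbing all the $\eps_0$-dependent geometric constants together with the $2m$ copies of $\iota$.

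Two bookkeeping issues remain. First, one must verify the global decay bullet $|\widehat\psi(\xi)|\le\langle\xi\rangle^{-d}$ for \emph{all} $\xi\in\R^d$, not just $\xi\in Y$; this uses that each $\widehat{\psi_{j,i}}$ satisfies $|\widehat{\psi_{j,i}}(s)|\le\exp(-c_3^{(1)}\langle s\rangle^{1/2})$ globally, so $|\widehat\psi(\xi)|\le\exp(-c_3^{(1)}\sum_{j,i}\langle\langle\xi,\vec e_{j,i}\rangle\rangle^{1/2})\le\exp(-c_3^{(1)}c(\eps_0)\langle\xi\rangle^{1/2})$, which dominates $\langle\xi\rangle^{-d}$ for $|\xi|$ large and can be arranged everywhere by rescaling $\psi$ by a harmless constant (or shrinking $c_1$). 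Second, the composition with the linear map $x\mapsto(x\cdot\vec f_{j,1},x\cdot\vec f_{j,2})$ changes $\widehat{\psi_{j,i}}$ by evaluating at $\langle\xi,\vec e_{j,i}\rangle$, which is the identity one needs to check by a direct Fourier computation — routine, using that the matrix with rows $\vec f_{j,i}$ is the inverse transpose of the matrix with columns $\vec e_{j,i}$. With these in hand the lemma follows; the conceptual content is entirely in arranging the product structure so that the one-dimensional constructions of Lemma~\ref{lem:regularsetdamping} can be invoked, and in tracking how the $c_1$-dilation must shrink by a factor $\sim 1/m$ to keep the total support inside a cube of side $c_1$.
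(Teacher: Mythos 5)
Your proposal is correct and follows essentially the same route as the paper's proof: $\what{\psi}$ is a product over $j$ on the Fourier side (so $\psi$ is a convolution in physical space, which is why the one-dimensional support parameter must shrink by $\sim 1/m$; your ``$\psi:=\prod_j\Psi_j$'' should therefore read as a convolution) of one-dimensional damping functions evaluated in the frame coordinates, with the pointwise lower bound of Lemma~\ref{lem:regularsetdamping} multiplied over a fixed $\eps_0$-dependent subset of $[-1,1]^2$ on which all $2m$ frame coordinates lie in $[-3/4,3/4]$. The one bookkeeping point to correct is that the factor $\iota^4c_1^4C_R^{-8}(\delta_1(1-\delta_1))^4$ in $c_2$ does not come from a sharper version of the one-dimensional lemma, but from the final renormalization $\wtil\psi=\frac15(c_1\nu)^4\psi$ that enforces the global bound $|\what{\wtil\psi}(\xi)|\le\la\xi\ra^{-d}$ --- the ``harmless constant'' in your last paragraph is exactly where those remaining factors enter.
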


{{
\begin{rmk}\label{rmk:admissbledampingd}
For general dimension $d$, we can take
\EQ{\nn
c_{2} &=\iota^{m} c_{1}^{(10m+2)d}\, m^{-10md}\, C_{R}^{-4d}(\delta_1(1-\delta_1))^{2d} \\
c_{3} &=\iota\,  c_{1}\,m^{-1} C_{R}^{-2}\delta_1(1-\delta_1) 
}
where $\iota>0$ is a small constant that depends on $\eps_0$ and $d$.
\end{rmk}
}}

\begin{proof}
Let $\psi_{j,i}$ be the damping function associated with $Y_{j,i}$ via Lemma~\ref{lem:regularsetdamping} with parameters $\tilde c_{1}:= \eps_{1}c_{1} m^{-1}$ where $\eps_{1}$ is a small parameter depending on $\eps_{0}$, and $c_{2}, c_{3}$ as given by Lemma~\ref{lem:regularsetdamping}, but in terms of~$\tilde c_1$. I.e.,
\[
c_{2}= \iota\, \varepsilon_1^{10}\, c_{1}^{10}m^{-10},\quad c_{3}= c_{1}\, m^{-1} \iota\, \varepsilon_1\, C_{R}^{-2}\delta_1(1-\delta_1).
\] 
We will absorb the constant $\varepsilon_1$ into $\iota$, hence $\iota$ will depend on $\varepsilon_0$. 
In the following we will also allow $\iota$ to change its value from line to line, as long as it only depends on $\varepsilon_0$.

Denote the coordinates associated with the basis 
$\vec e_{j,1},\, \vec e_{j,2}$ by $(\xi_{j,1},\xi_{j,2})$. We set, with $\xi\in \R^{2}$,  
\[
\what{\psi}(\xi) := \prod_{j=1}^{m} \what{\psi_{j}}(\xi),\qquad \what{\psi_{j}}(\xi):=\what{\psi_{j,1}}(\xi_{j,1})  \what{\psi_{j,2}}(\xi_{j,2}) 
\]
Then 
\EQ{ 
|\what{\psi}_{j}(\xi)| &\le   \exp(-c_{3}\, \la \xi_{j,1}\ra^{\frac12})\exp(-c_{3}\, \la \xi_{j,2}\ra^{\frac12}) \\
&\le  \exp(-c_{3}\, \la \xi \ra^{\frac12}),
}
where $c_{3}$, more precisely, $\iota$, can change its value in the last line depending on $\eps_{0}$. 
Taking products gives
\EQ{\label{eq:psi global}
|\what{\psi}(\xi)|  \le \exp(-mc_{3}\, \la \xi \ra^{\frac12}) = \exp(- c_{1} \nu \, \la \xi \ra^{\frac12}), \quad \nu = \iota\, C_{R}^{-2}\delta_1(1-\delta_1)
}
In particular, $\psi\in L^{2}(\R^{2})$ as well as $\psi_{j}\in L^{2}(\R^{2})$.
Since $\psi_j$ are also compactly supported functions, $\psi_j\in L^1(\R^2)$.
Hence in the sense of $L^1$ functions, 
\[
\psi = \Asterisk_{j=1}^{m}\; \psi_{j}
\]
whence 
\EQ{\nn
\supp(\psi) \subset \sum_{j=1}^{m} \supp(\psi_{j})  \subset \sum_{j=1}^{m} [-c_{1}m^{-1}, c_{1}m^{-1}]^{2} \subset [-c_{1} , c_{1}]^{2},
}
where we used that each $\psi_{j,i}$ is a damping function with $\tilde{c}_1=\varepsilon_1 c_1m^{-1}$.
Next, if $\xi\in Y_{j}$, then 
\EQ{\nn 
|\what{\psi}_{j}(\xi)| &\le  \exp\big( -c_3 \Theta(|\xi_{j,1}|) |\xi_{j,1}|) \exp\big( -c_3 \Theta(|\xi_{j,2}|) |\xi_{j,2}|)  \\
&\le \exp\big( -c_3 \Theta(|\xi|_1) |\xi|_1) 
}
where again $\iota$ is allowed to change in the second line. 
Since $Y$ is covered by the union of~$Y_{j}$, we have
\EQ{\label{eq:Y global}
|\what{\psi}(\xi)|  \le \exp\big( -c_3 \Theta(|\xi|_1) |\xi|_1\big)  \quad\forall\;  \xi\in Y. 
}
Finally, from \eqref{eq:psi c2}, for each $1\le j\le m$,
\EQ{\nn
|\what{\psi_{j}}(\xi)|\ge c_{2}^{2},\ \ \forall \xi_{j,1}, \xi_{j,2}\in [-3/4,3/4].
}
Hence,
\[
\| \what{\psi}\|_{L^{2}([-1,1]^{2})} \ge c_2^{2m} |E|^{\frac12},
\]
where $E$ is the subset of $[-1,1]^{2}$ where all conditions $\xi_{j,i} \in [-3/4,3/4],\;\; i=1,2$, $1\le j\le m$, are met. Clearly, $|E|^{\frac12}$ is some number depending on~$\eps_{0}$.  It follows that 
\EQ{\label{eq:L2 lower}
\| \what{\psi}\|_{L^{2}([-1,1]^{2})} \ge \iota^{2m}\, c_{1}^{20m} m^{-20m}
}
where $\iota$ depends on $\eps_{0}$. 

We required $|\what{\psi}(\xi)|\le \la\xi\ra^{-2}$ in our definition of damping function, see Definition~\ref{def:damp}. 
Since for any $0<\rho<1$ 
\[
\exp\big(-\rho\, \la \xi\ra^{\frac12}\big)\le 5\rho^{-4} \la \xi\ra^{-2}
\]
It follows from \eqref{eq:psi global} that $\wtil\psi:=\frac15(c_{1}\nu)^{4}\psi$ is a damping function in the sense of the definition.  Since $\frac15(c_{1}\nu)^{4}\le 1$, the decay \eqref{eq:Y global} remains intact, as does the support condition. However, \eqref{eq:L2 lower} needs to be modified:
\EQ{\nn
\| \what{\wtil\psi}\|_{L^{2}([-1,1]^{2})} &\ge \frac15(c_{1}\nu)^{4}\iota^{2m}\, c_{1}^{20m} m^{-20m} \\
& = \frac15 \iota^{2m+4}  c_{1}^{20m+4} m^{-20m}  C_{R}^{-8}(\delta_1(1-\delta_1))^{4}.
}
Absorbing the $\frac15$ into $\iota$, the lemma is proved. 
\end{proof}

Finally, we need to check that $Y$ remains admissible if it is transformed by the similarities in \eqref{eq:Y changed}. 

\begin{lemma}
\label{lem:ad stable}
Let  $Y\subset [-N,N]^{d}$ with $N\ge10$ be admissible at scale $N$ with parameters $\delta_1, C_{R},\eps_{0},m$. Let $L\ge {{4}}$ be an integer. Then for all  integers $n\geq 0$ with $L^{n+1}\le N$ and for all 
$$\eta\in [-NL^{-n}-3, NL^{-n}+3]^{d}$$ the set 
\[
L^{-n}Y+[-4,4]^d+\eta
\]
is admissible at scale  $S(2NL^{-n}+7)$ with parameters $\delta_1, 576 S^2 C_{R},\eps_{0},m$, where $S=S(\eps_0)\ge1$. 
\end{lemma}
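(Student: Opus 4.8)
The plan is to transform the product structure of $Y$ through the affine map $\Lambda(\xi) = L^{-n}\xi + \eta$ (composed with the translation by $[-4,4]^d$, which only enlarges each factor by a bounded amount after the dilation) and check that each of the three bulleted conditions in Definition~\ref{def:Yadmissible} is preserved, with controlled constants. The key observation is that dilation by $L^{-n}$ is the \emph{same} scaling factor along every axis, so it maps the product $Y_j = \{\sum_i \xi_{j,i}\vec e_{j,i} : \xi_{j,i}\in Y_{j,i}\}$ to $\{\sum_i \zeta_{j,i}\vec e_{j,i} : \zeta_{j,i}\in L^{-n}Y_{j,i}\}$ with the \emph{same} frame $\vec e_{j,i}$. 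Hence the angle condition $|\det(\vec e_{j,1},\ldots,\vec e_{j,d})|\ge\eps_0$ is untouched. The translation by $\eta$ is more delicate: a generic translation of $\R^d$ does \emph{not} act factor-wise on the coordinates adapted to the frame $\vec e_{j,i}$. However, since the frame spans $\R^d$ with $|\det|\ge\eps_0$, one may decompose $\eta = \sum_i \eta_{j,i}\vec e_{j,i}$ with $|\eta_{j,i}|\le S_0(\eps_0)\,|\eta|$ for a constant $S_0$ depending only on $\eps_0$; then translating $Y_j$ by $\eta$ amounts to translating each one-dimensional factor $Y_{j,i}$ by $\eta_{j,i}$. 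So after $\Lambda$ and the cube-translation, $Y$ is covered by $m$ products of the one-dimensional sets $Y_{j,i}' := L^{-n}Y_{j,i} + [-c(\eps_0),c(\eps_0)] + \eta_{j,i}$.

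Next I would verify the $\delta_1$-regularity of each $Y_{j,i}'$ on the required scales with the stated constant. Dilation of a $\delta_1$-regular measure $\mu$ by $L^{-n}$ rescales both the measure and the scale window: pushing forward $\mu_{Y_{j,i}}$ under $t\mapsto L^{-n}t$ and multiplying by $L^{n\delta_1}$ gives a measure that is $\delta_1$-regular on scales $2L^{-n}$ to $NL^{-n}$ with the same constant $C_R$ (this is the standard rescaling invariance of regularity, see the appendix / \cite{BD}). Translating by $\eta_{j,i}$ does not affect regularity at all. The only genuine change comes from (a) enlarging the scale window downward: we need regularity down to scale $2$ on the new set, and $2L^{-n}$ may be smaller or larger than $2$ — if $L^{-n}\le 1$ we are fine since regularity on scales $2L^{-n}$ to $NL^{-n}$ certainly includes $2$ to $NL^{-n}$ when $NL^{-n}\ge 2$, which holds because $L^{n+1}\le N$ and $L\ge 4$ force $NL^{-n}\ge L\ge 4$; and (b) the additive fattening by an interval of length $O_{\eps_0}(1)$, which by the standard "fattening a regular set stays regular at the cost of a bounded constant" lemma (again in the appendix) costs a factor that is polynomial in the length of the fattening interval, hence $\le S(\eps_0)$. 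Combining the fattening losses across the finitely many coordinates and bookkeeping $2$-vs-$2L^{-n}$ gives a total multiplicative loss bounded by the stated $576 S^2$ (the explicit $576 = 24^2$ presumably arising from the $[-4,4]$ cube, i.e. diameter $\le 8\sqrt d$ versus the minimal factor sizes, plus the $2L^{-n}$-to-$2$ passage). The new ambient scale is $S(2NL^{-n}+7)$: the one-dimensional factors now live in an interval of half-length $NL^{-n} + c(\eps_0) + S_0|\eta| \le S(2NL^{-n}+7)$ using $|\eta|\le \sqrt d\,(NL^{-n}+3)$, so the recombined products $Y_j'$ lie in a cube of that size, and the stated scale window $2$ to $S(2NL^{-n}+7)$ is consistent with what the factors satisfy.

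The step I expect to be the main obstacle is the bookkeeping of \textbf{how much the regularity constant degrades under the additive fattening and the downward extension of the scale window}, i.e. pinning down the explicit $576 S^2 C_R$ rather than just "some constant depending on $\eps_0$ and $C_R$." This requires the quantitative version of the lemma "$A$ $\delta$-regular on scales $a$ to $b$ with constant $C$ $\Rightarrow$ $A + [-h,h]$ $\delta$-regular on comparable scales with constant $\le P(h/a)\cdot C$" — which is exactly what is deferred to Appendix~\ref{sec:appregular} — together with careful tracking of the constants $S_0(\eps_0)$ from the frame decomposition and the angle-to-volume comparison $|\vec e_{j,1}\cdot\vec e_{j,2}|<1-\eps_0 \Leftrightarrow |\det|\ge S(\eps_0)$. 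Everything else (angle condition, same $m$, same $\delta_1$) is immediate from the affine and product structure. So the proof reduces to: (1) decompose $\eta$ in each frame with control $S_0(\eps_0)$; (2) apply the regularity-rescaling and regularity-fattening lemmas from the appendix to each one-dimensional factor; (3) recombine and read off the ambient scale and the final constant $576 S^2 C_R$.
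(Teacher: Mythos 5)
Your proposal is correct and follows essentially the same route as the paper: decompose $\eta$ and the cube $[-4,4]^d$ into each frame $\{\vec e_{j,i}\}$ with a loss $S(\eps_0)$, reduce to the one-dimensional factors $L^{-n}Y_{j,i}+[-4S,4S]+\eta_{j,i}$, and apply the dilation and fattening lemmas of Appendix~\ref{sec:appregular} factor-wise. The one bookkeeping step you leave implicit is the upward extension of the regularity window from $NL^{-n}$ to $S(2NL^{-n}+7)$ via Lemma~\ref{lem:BDlemma22} (with $T=9S$), which is where most of the stated constant actually comes from: $576S^2C_R=(2\cdot 9S)\cdot(32S)\,C_R$, the $32S$ being the fattening loss, rather than your guessed $24^2$.
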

\begin{proof}
First, 
\[
L^{-n}Y+[-4,4]^d+\eta \subset  [-2NL^{-n}-7, 2NL^{-n}+7]^d
\]
for all $\eta$ as above. Second, by \eqref{eq:Ycover}, 
\EQ{\nn 
&L^{-n}Y+[-4,4]^d+\eta \subset \bigcup_{j=1}^{m} \Big( L^{-n} Y_{j} +[-4,4]^d+\eta \Big) ,\\
&L^{-n} Y_{j}  =  \Big\{\sum_{k=1}^d \xi_{k}\vec e_{j,k} \::\: \xi_{k}\in L^{-n}Y_{j,k}, \; k=1,2,...,d.\Big\}
}
and
\[
L^{-n} Y_{j} +[-4,4]^d+\eta \subset \Big\{ \sum_{k=1}^d \xi_{k}\vec e_{j,k} \::\: \xi_{k}\in L^{-n}Y_{j,k} + [-4S,4S]+ \eta_{j,k}, \; k=1,2,...,d.\Big\}
\]
where $S=S(\eps_0)\ge 1$ and  $|\eta_{j,k}|\le S(NL^{-n}+3)$.   
By Lemmas 2.1, 2.2, 2.3 in \cite{BD}, see also Lemmas \ref{lem:BDlemma21}, \ref{lem:BDlemma22}, \ref{lem:BDlemma23} with $d=1$, 
the sets
\[
L^{-n}Y_{j,k} + [-4S,4S]+ \eta_{j,k} \subset [-S(2NL^{-n}+7), S(2NL^{-n}+7)] 
\]
are $\delta_1$-regular with constant $576 S^2 C_R$ on scales $2$ to $S(2NL^{-n}+7)$.  
Indeed, \linebreak 
for $n\geq 1$,
Lemma \ref{lem:BDlemma21} implies that $L^{-n}Y_{j,k}$ is $\delta_1$-regular from scales $2L^{-n}\leq 1/2$ to $L^{-n}N$ with constant $C_R$.
Lemma \ref{lem:BDlemma23} implies that 
\[L^{-n}Y_{j,k}+[-4S,4S]=L^{-n}Y_{j,k}+8S[-1/2, 1/2]\] is $\delta_1$-regular from scales $1$ to $L^{-n}N$ with constant $32SC_R$.
Lemma \ref{lem:BDlemma22} allows us to increase the upper scale from $L^{-n}N$ to $9S L^{-n}N\geq S(2L^{-n}N+7)$, with changing the constant from $32S C_R$ to $576S^2 C_R$.
Note that shifting a set does not change its $\delta_1$-regularity, hence $L^{-n}Y_{j,k} + [-4S,4S]+ \eta_{j,k}$ is $\delta_1$-regular with constant $576S^2 C_R$.
The proof for $n=0$ is similar.

The lemma now follows from Definition~\ref{def:Yadmissible}. 
\end{proof}

\subsection{Proof of Theorem~\ref{thm:main1}}
\begin{proof}
The proof of Theorem~\ref{thm:main1} is now a corollary to Theorem~\ref{thm:FUP2} and the considerations in this section, {{with $m=1$}}. 
We will keep track of various constants in order to obtain the effective exponent $\beta$.

First, let 
\[L:=\lceil (2\sqrt{5}C_R)^{\frac{2}{2-\delta}} \rceil {{\geq 4}},\]
be as in \eqref{def:L}.
Lemma \ref{lem:BDlemma210} implies that for all $n\geq 0$ with $L^{n+1}\leq N$, $X$ is porous at scale $L$ with depth $n$.
This verifies the porosity condition on $X$ in Theorem \ref{thm:FUP2}.

Combining Lemma \ref{lem:admissibledamping}, {{more specifically Remark~\ref{rmk:admissbledampingd}}}, with \ref{lem:ad stable}, we obtain that 
for any $n\in \N$ such that $L^{n+1}\leq N$, and for all $\eta\in [-L^{-n}N-3, L^{-n}N+3]^d$, the set 
\[L^{-n}Y+[-4,4]^d+\eta\]
admits a damping function with parameters $c_1$,
\EQ{\nn
c_2&=\iota c_1^{12 d} (576 S^2 C_R)^{-4d} (\delta_1(1-\delta_1))^{2d},\\
c_3&=\iota c_1 (576 S^2 C_R)^{-2} \delta_1(1-\delta_1),
}
where $\iota$ and $S$ are constants depending on $\varepsilon_0$. We absorb the constant $S$ into $\iota$, and allow $\iota$ to depend on $d$ as well. Hence we can simply write
\EQ{\nn
c_2&=\iota c_1^{12d} C_R^{-4d} (\delta_1(1-\delta_1))^{2d},\\
c_3&=\iota c_1 C_R^{-2} \delta_1(1-\delta_1).
}
Note that this verifies the condition on $Y$ in Theorem \ref{thm:FUP2}.

Before applying Theorem \ref{thm:FUP2}, let us first determine the constant $C_*$ in \linebreak 
Corollary \ref{cor:UPY} with $c_1, c_2, c_3$ defined above.
Recall that 
\[C_*=e^{\frac{c_3 \Theta(R_1) (R_1+2)}{2}},\]
with $\alpha=(1+\delta_1)/2$ and
\EQ{\label{eq:defR1}
R_1=
\begin{cases}
\exp\Big[\Big(\frac{C_R^2}{\iota c_1 \delta_1(1-\delta_1)}\Big)^{\frac{2}{1-\delta_1}}\Big]\\
\exp\Big(4^{\frac{2}{1-\delta_1}}\Big)\\
\Big(\frac{C_R^2 (-\log{c_1})^d}{\iota c_1 \delta_1(1-\delta_1)}\Big)^8\\
\Big[4\log\Big(\frac{C_R^{8d}}{\iota c_1^{24 d} (\delta_1(1-\delta_1))^{4d}}\Big)\Big]^2\\
(8d)^4\\
\frac{C_R^4}{\iota c_1^2 (\delta_1(1-\delta_1))^2}
\end{cases}
}
be as in \eqref{def:R}, in which we absorb all the $d$-dependent constants into $\iota$. 

Now we can apply Theorem \ref{thm:FUP2} with 
\[c_1=(2L)^{-1}=\Big(2 \lceil (2\sqrt{5}C_R)^{\frac{2}{2-\delta}} \rceil \Big)^{-1}.\]
We need to trace out the constant $\beta$.

Plugging $c_1$ into \eqref{eq:defR1}, and making $\iota$ smaller if necessary (depending only on $d$ and $\varepsilon_0$), we have
\[R_1\leq \exp\Big[\Big(\frac{C_R^2}{\iota \delta_1(1-\delta_1)}\Big)^{\frac{6-2\delta}{(1-\delta_1)(2-\delta)}}\Big]=:R_2.\]
This implies 
\[C_*=\exp\Big(c_1 C_R^{-2} \delta_1(1-\delta_1) \Theta(R_1)(R_1+2)\Big)\leq \exp(R_2).\]

Recall $T_0$ as in \eqref{def:T} and $\gamma_0$ as in \eqref{def:gamma0}.
We compute that,
\EQ{\label{eq:T1}
T_0
=\Bigg\lceil \frac{\log(2C_{\phi}C_*^2+\sqrt{4C_{\phi}^2 C_*^4 +c_{\phi}^2})}{\log L} \Bigg\rceil
\leq &\frac{2\log{C_*}+\log{(5C_{\phi})}}{\log L}\\
\leq &\frac{2R_2+\log{(5C_{\phi})}}{\log L}=:T_1,
}
and
\EQ{\label{eq:gamma0T1}
\gamma_0(T_1)=\frac{1-{c_{\phi}^2}/{L^{2(T_1-1)}}}{2C_*^2}\geq \frac{1}{4C_*^2}\geq \frac{1}{4}\exp(-2R_2).
}
In both inequalities above, we used $C_*\leq \exp(R_2)$.

Recall $\beta$ as in \eqref{eq:Xf}.
Use that $-\log(1-x)\geq x$ for $x<1$, we have
\EQ{\nn
\beta=-\frac{\log(1-\gamma_0(T_1)/2)}{T_1\log L}
\geq \frac{\gamma_0(T_1)}{2T_1 \log L}.
}
Combining this with the estimates of $T_1$ and $\gamma_0(T_1)$ as in \eqref{eq:T1} and \eqref{eq:gamma0T1}, we have
\EQ{\nn
\beta\geq \exp\Big\{-\exp\Big[\Big(\frac{C_R^2}{\iota \delta_1(1-\delta_1)}\Big)^{\frac{6-2\delta}{(1-\delta_1)(2-\delta)}}\Big]\Big\},
}
with $\iota$ being a small constant depending on $\varepsilon_0$ and $d$.
This finishes the proof.
\end{proof}

{{Corollary~\ref{thm:main2} follows from Theorem~\ref{thm:main1} by the triangle inequality.
\begin{rmk}
If we try to combine the construction of a damping function for $m$ covers as in Lemma~\ref{lem:admissibledamping}, with Theorem~\ref{thm:FUP2},
we could allow $m$ to grow in $N$ like $\log\, \log\, \log N$.
This is worse than the power law growth obtained via the triangle inequality.
\end{rmk}
}}

\subsection{Distortion of $Y$ by diffeomorphisms}\label{sec:distort}
 
 In this section we prove Theorem~\ref{thm:main3}. We need to show that Theorem~\ref{thm:main1} remains valid if an admissible set $Y$ is distorted by a diffeomorphism $\Phi_N(x) = N\Psi_0(x/N)$ from the cube 
$[-N,N]^d\to [-N,N]^d$, cf.~\eqref{eq:PhiN}. The argument is related to Section~4 of~\cite{BD}. 
Thus, let  $Y=\Phi_N(\wtild Y)$ where $\wtild Y\subset [-N,N]^d$ is admissible at scale $N$ with parameters 
$\delta_1, C_R, \varepsilon_0, {{m=1}}$ in the sense of Definition~\ref{def:Yadmissible}.  
Suppose $f\in L^2(\R^d)$ with $\supp(\what{f})\subset Y$ and set $\what{g}:=\what{f}\circ \Phi_N$ so that $\supp(\what{g})\subset \wtild Y$.  Furthermore, 
\EQ{
\label{eq:FTtwist}
f(x) &=  \int_{[-N,N]^d}  e^{2\pi i x\cdot  \xi} \, \what{f}(\xi)\, d\xi = \int_{[-N,N]^d}  e^{2\pi i x\cdot  \xi} \, \what{g}(\Phi_N^{-1}(\xi))\, d\xi\\
& = \int_{[-N,N]^d}  e^{2\pi i x\cdot  \Phi_N(\eta)} \, \what{g}(\eta)\ |\det(D\Phi_N(\eta))|\, d\eta
}
We claim that for some $\beta>0$ and $C>0$ depending on all the same parameters in Theorem~\ref{thm:main1} as well as on~$D_0$,  
\EQ{
\label{eq:main claim} 
\Big\|  \int_{[-N,N]^d}  e^{2\pi i x\cdot  \Phi_N(\eta)} \, \what{h}(\eta) \, d\eta  \Big\|_{L^2(X)} \le C N^{-\beta} \|h\|_2
}
for all $h\in L^2$ with $\supp(\what{h})\subset\wtild Y\subset [-N,N]^d$, an admissible set in the sense of Definition~\ref{def:Yadmissible}.  
Setting $\what{h}(\eta):= \what{g}(\eta)\ |\det(D\Phi_N(\eta))|$, we  conclude from~\eqref{eq:main claim} that
\[
\|f\|_{L^2(X)} \le CN^{-\beta}\|\what{h}\|_2\le C N^{-\beta}\|\what{f}\|_2 = C N^{-\beta}\| {f}\|_2
\]
with possibly a different constant. So it remains to prove the claim~\eqref{eq:main claim}. 
We will prove it from another statement, namely 
\EQ{
\label{eq:main claim2} 
\Big\|  \int_{[-N,N]^d}  e^{2\pi i x\cdot  \Phi_N(\eta)} \,\one_Y(\eta) \,  h(\eta) \, d\eta  \Big\|_{L^2(X)} \le C N^{-\beta} \|h\|_2
}
for all $h\in L^2$. 
Notice that by Plancherel we could remove the Fourier transform from~$h$. 

\medskip

 To prove \eqref{eq:main claim2}, divide  $[-N,N]^d=\bigcup_k Q_k$ into congruent cubes of side length $L_N$ with $\frac12\sqrt{N} \le L_N \le \sqrt{N}$.  Let $\{\chi_k\}_k$ be a partition of unity adapted to these cubes. 
With $\eta_k$ being the center of~$Q_k$, 
\EQ{\label{eq:Tk}
& \int_{[-N,N]^d}  e^{2\pi i x\cdot  \Phi_N(\eta)} \, \one_Y(\eta) \,  {h}(\eta) \, d\eta \\ 
&= \sum_k \int_{\R^d}  e^{2\pi i x\cdot  \Phi_N(\eta)} \, \chi_k(\eta)\, \one_Y(\eta) \,  {h}(\eta) \, d\eta  \\
& = \sum_k \int_{\R^d}  e^{2\pi i x\cdot  (\Phi_N(\eta_k)+ D\Phi_N(\eta_k) (\eta-\eta_k) )} \, a_k(x,\eta) \,  \one_Y(\eta) \, {h}(\eta) \, d\eta     =: \sum_k (T_k h)(x)
}
where 
\EQ{\label{eq:Taylor} 
a_k(x,\eta) &:= e^{2\pi i x\cdot R_k(\eta)} \, \chi_k(\eta) \\
R_k(\eta) &:= \int_0^1 (1-t) \langle D^2 \Phi_N(\eta_k + t(\eta-\eta_k))(\eta-\eta_k), \eta-\eta_k\rangle \, dt
}
the latter being the error in the second order Taylor expansion (we are suppressing the parameter $N$ here). Then 
\EQ{\label{eq:ak bounds}
\| R_k\|_{L^\infty(Q_k)} &\le C=C(d,D_0) \\
  \| \partial_x^\alpha \partial_\eta^\gamma\, a_k(x,\eta)\|_{L^\infty([-1,1]^d\times Q_k)} &\le C(d,D_0,\alpha,\gamma) N^{-\frac{|\gamma|}{2}}
}
for every multi-indices $\alpha, \gamma$.  By H\"{o}rmander's variable coefficient Plancherel theorem, 
\EQ{\label{eq:Hor}
\max_k \| T_k \|_{2\to2} \le C(d,D_0) 
}
This follows by the usual $T^*T$ argument: 
\EQ{\label{eq:TkKk}
\| T_k h\|_2^2 &= \langle T_k^* T_k h, {h}\rangle \\
(T_k^* T_k h) (\eta') &= \int_{\R^d} K_k(\eta',\eta)\,  {h}(\eta)\, d\eta \\
K_k(\eta',\eta) &= \int_{\R^d}  e^{2\pi i x\cdot  (\Phi_N(\eta)-\Phi_N(\eta'))} \, \one_Y(\eta) \, \one_Y(\eta') \,\chi_k(\eta) \chi_k(\eta') \, dx
}
Since $\| \Phi_N(\eta)-\Phi_N(\eta') \|\ge D_0^{-1} \|\eta-\eta'\|$ in the sense of Euclidean lengths, repeated integrations by parts yield the decay
\[
|K_k(\eta',\eta)| \le C(d,D_0) \langle \eta-\eta'\rangle^{-d-1}
\]
whence \eqref{eq:Hor} follows by Schur's test.  In particular, $\|\one_X T_k\|_{2\to2} \le C$ with the same constant as in~\eqref{eq:Hor}. 

Next, we would like to show that $\one_X T_k$ and $\one_X T_\ell$ do not interact much  for all cubes $Q_k,Q_\ell$ which are not nearest neighbors.
In order to integrate by parts in~$x$, cf.~\eqref{eq:TkKk}, we need to smooth out $\one_X$ at the correct scale. Define $$X(N^{-\frac{1}{2}}):=X+[-N^{-\frac{1}{2}}, N^{-\frac{1}{2}}]^d.$$
By~\cite[Lemma~3.3]{DyZ} there exists a smooth $\psi$ taking values in $[0,1]$ with $\psi=1$ on $X $ and with $\supp(\psi)\subset X(N^{-\frac{1}{2}})$, as well as so that 
\EQ{\label{eq:psi bounds}
\| \partial_x^\alpha \psi\|_\infty \le C(\alpha) N^{\frac{|\alpha|}{2}}
}
for all multi-indices.  Define $S_k:=\psi \, T_k$. On the one hand, $S_k$ still obeys~\eqref{eq:Hor}. On the other hand, for any cubes 
$Q_k,Q_\ell$ which are not nearest neighbors one has 
\EQ{
\label{eq:k ell}
\| S_k^* S_\ell\|_{2\to2}  & \le C_p(d,D_0)\, N^{\frac{p}{2}}\,\dist(Q_k,Q_\ell)^{-p}
}
for every positive integer $p$. 
This follows from the fact that the kernel of $S_k^* S_\ell$ equals 
\[
K_{k,\ell}(\eta',\eta) = \int_{\R^d}  e^{2\pi i x\cdot  (\Phi_N(\eta)-\Phi_N(\eta'))} \, \one_Y(\eta) \, \one_Y(\eta') \,\chi_k(\eta) \chi_\ell(\eta') \,\psi(x)^2\,  dx
\]
Using the differential operator 
\[
\calL = \frac{1}{2\pi i} \frac{\Phi_N(\eta)-\Phi_N(\eta')}{\|\Phi_N(\eta)-\Phi_N(\eta')\|^2}\cdot   \nabla_x, 
\]
which obeys 
\[
\calL\,   e^{2\pi i x\cdot  (\Phi_N(\eta)-\Phi_N(\eta'))} =  e^{2\pi i x\cdot  (\Phi_N(\eta)-\Phi_N(\eta'))},
\]
repeated integration by parts now yields~\eqref{eq:k ell}. 
Finally, given any $k$, only a uniformly bounded number of choices of $\ell$ will satisfy 
\[
S_k S_\ell^* = \psi \, T_k T_\ell^* \, \psi =0
\]
This is due to the fact that $\chi_k(\eta) \chi_\ell(\eta)=0$ up to a bounded number of choices of $\ell$ given~$k$.  
If we label the cubes by lattice points $\uk\in\Z^d$, then $\eta_{\uk}=L_N\uk$ whence 
\[
N^{\frac{p}{2}}\,\dist(Q_{\uk},Q_{\uell})^{-p} \les  N^{\frac{p}{2}} (L_N|\uk-\uell|)^{-p} \les |\uk-\uell|^{-p}
\]
which is summable over $\Z^d$ provided $p>d$. On the other hand, we also have 
\[
\| S_k^* S_\ell\|_{2\to2} \le \|S_k\|_{2\to2} \|   S_\ell \|_{2\to2}  \le B^2, \quad B:=\sup_j \| S_j\|_{2\to2} 
\]
Combining these two estimates we infer that for any $0<\eps<1$, 
\[
\|S_{\uk} S_{\uell}^*\|_{2\to2}+ \| S_{\uk}^* S_{\uell}\|_{2\to2} \le {{C(d,D_0,\eps)}} \, B^{2(1-\eps)} \langle \uk-\uell \rangle^{-2(d+1)} 
\]
for all $\uk,\uell\in\Z^d$. 
Note that $B\le  C(d,D_0) $ by H\"{o}rmander's bound~\eqref{eq:Hor}. 
Hence by Cotlar's lemma, 
\EQ{\label{eq:cotlar}
\Big\|  \int_{[-N,N]^d}  e^{2\pi i x\cdot  \Phi_N(\eta)} \, \one_Y(\eta) \,  {h}(\eta) \, d\eta  \Big\|_{L^2(X)} \le C(\eps,d,D_0) \, \max_k \|S_k\|_{2\to2}^{1-\eps}.
}

\medskip

The claim~\eqref{eq:main claim2} will now follow from \eqref{eq:cotlar} by applying the fractal uncertainty principle of Theorem~\ref{thm:main1} to each $S_k$. For this we need to linearize the phase as in~\eqref{eq:Tk}  which in turn makes the localization to scales~$\sqrt{N}$ necessary. 

To be specific, we  reduce~\eqref{eq:main claim2} to the following estimate. 
Let $\psi_0$ be compactly supported functions satisfying the bounds 
\EQ{\label{eq:psi1psi2}
 \| \partial_x^\alpha \psi_0\|_\infty \le C_s\, N^s \qquad \forall\; |\alpha|=s\ge0
}
where $N\ge1$ is arbitrary and all constant are independent of~$N$. 
We assume that $\psi_0$ is supported in a $\delta$-regular set in $[-1,1]^d$ on scales $1/N$ to~$1$, and with $0<\delta<d$. 
Let 
$$Z=N^{-1} Y_1\subset[-1,1]^d$$ 
be a rescaled version of an admissible set~$Y_1$ at scale $N$ as in Definition~\ref{def:Yadmissible}. 
Furthermore, suppose that the symbol $a$ is smooth and compactly supported with the bounds 
\EQ{\label{eq:abds}
\| \partial_x^\alpha \partial_\xi^\gamma a(x,\xi) \|_\infty \le C(\alpha,\gamma)\qquad \forall\;\alpha,\gamma
}
Then for some $\beta>0$ and $C$ as above, 
\EQ{
\label{eq:main claim3}
\Big\| N^{\frac{d}{2}}\int_{\R^d}  e^{2\pi i N x\cdot  \xi} \; \psi_0(x) a(x,\xi) \one_Z(\xi) h(\xi)\, d\xi\Big\|_2 &\le C N^{-\beta}\|h\|_2
}
Indeed, 
\EQ{\nn 
& \Big\| \int_{\R^d}  e^{2\pi i x\cdot  (\Phi_N(\eta_k)+ D\Phi_N(\eta_k) (\eta-\eta_k) )} \; \psi(x)\,  a_k(x,\eta) \,  \one_Y(\eta) \, {h}(\eta) \, d\eta  \Big\|_{2} \\
& \les \Big\| \int_{\R^d}  e^{2\pi i x\cdot  \zeta } \;  \psi(x)\, a_k(x,D\Phi_N(\eta_k)^{-1}\zeta) \,  \one_Y(D\Phi_N(\eta_k)^{-1}\zeta) \, 
{h}(D\Phi_N(\eta_k)^{-1}\zeta) \, d\zeta  \Big\|_{2}\\
& = N^{\frac{d}{4}} \Big\| \int_{\R^d}  e^{2\pi i N^{\frac12} x\cdot  \xi } \;  \psi(x)\, \wtild a_k(x,N^{\frac12} \xi) \,  N^{\frac{d}{4}} 
\one_{Y_1}(N^{\frac12} \xi ) \, \wtild{h}(N^{\frac12} \xi) \, d\xi  \Big\|_{2}.
}
Here $\wtild a$, $\wtild{h}$ signify the functions on the second line but with the linear isomorphism $D\Phi_N(\eta_k)^{-1}$ included, and 
$Y_1=D\Phi_N(\eta_k) Y$ is an admissible set at scale $C(D_0) N$ with constant $C(D_0)$ and parameters that depend on $D_0$. 
{{We could cover $Y_1$ by admissible sets at scale $N$ and use triangle inequalities.
Hence without loss of generality, we assume $Y_1$ is admissible at scale $N$.}}
Note that all these only affect the estimates through $D_0$-dependent constants. 
Note that $\one_{Y_1}(N^{\frac12}\xi)=\one_Z(\xi)$, with $Z=N^{-\frac{1}{2}}Y_1$.
By~\eqref{eq:psi bounds}, $\psi_0(x):=\psi(x)$ satisfies the required bound, furthermore $\psi_0$ is supported on a $X(N^{-\frac{1}{2}})$ which is a $\delta$ regular set at scales $N^{-\frac{1}{2}}$ to $1$, see Lemma \ref{lem:BDlemma23}.
As for the amplitude, and ignoring the distinction between $\wtild a_k$ and $a_k$, 
\EQ{\nn
a_k(x, N^{\frac12}\xi) &:= e^{2\pi i x\cdot R_k(N^{\frac12} \xi)} \, \chi_k(N^{\frac12} \xi ) \\
R_k(N^{\frac12}\xi) &:= N\int_0^1 (1-t)  \langle D^2 \Phi_N(\eta_k + t(N^{\frac12}\xi-\eta_k))(\xi-\eta_k'), \xi-\eta_k'\rangle \, dt
}
where $\eta_k' = N^{-\frac12}\eta_k$. Setting $a(x,\xi) = a_k(x, N^{\frac12}\xi)$, we conclude from \eqref{eq:ak bounds} that all derivatives of $a$ are bounded, uniformly in~$N$. 
Finally, $$ \| N^{\frac{d}{4}}\wtild{h}(N^{\frac12} \xi) \|_2 \simeq  \|h\|_2.$$ 
Thus, we can apply~\eqref{eq:main claim3} with $N$ replaced by $N^{\frac12}$ to obtain a gain of $N^{-\beta/2}$, and we are done. 

It remains to prove~\eqref{eq:main claim3}. If $a=1$ on the $\supp(\psi_0)\times Y$, then this follows immediately from Theorem~\ref{thm:main1} by a rescaling. Indeed, one has by that theorem
\EQ{\nn 
& \Big\| N^{\frac{d}{2}}\int_{\R^d}  e^{2\pi i N x\cdot  \xi} \; \psi_0(x)\,   \one_Z(\xi)\, h(\xi)\, d\xi\Big\|_2 \\ 
& =  \Big\|  \int_{\R^d}  e^{2\pi i  x\cdot  \xi} \; \psi_0(x)\,    \one_Z(\xi/N)\, N^{-\frac{d}{2}} h(\xi/N)\, d\xi\Big\|_2   \\
&\les N^{-\beta} \| N^{-\frac{d}{2}} h(\xi/N) \|_2 = N^{-\beta} \|   h  \|_2. 
}

Let us denote
\[(Ah)(x):=N^{\frac{d}{2}}\int_{\R^d}  e^{2\pi i N x\cdot  \xi} \; a(x,\xi)  h(\xi)\, d\xi.\]
Thus it suffices to show that
\[\|\psi_0\, A\,  \one_{Z}\|_{2\to 2}\leq CN^{-\beta}.\]
Let $\rho\in (0,1)$ with its valued determined later.

First let us note that by the usual $A^*A$ argument, we have H\"ormander's bound, 
\EQ{\label{eq:Anorm}
\|A\|_{2\to 2}\leq C.
}
Next we decompose $\psi_0\, A\,  \one_{Z}$ into the following
\begin{align*}
\psi_0\, A\,  \one_{Z}=\psi_0\, \mathcal{F}_N^{-1}\,  A_1+ A_2\, \mathcal{F}_N\, A\, \one_{Z}&,\\
A_1:=\one_{\R^d\setminus Z(N^{-\rho})}\, \mathcal{F}_N\, A\, \one_{Z},\ \ \ A_2:=&\psi_0\, \mathcal{F}_N^{-1}\, 
\one_{Z(N^{-\rho})},
\end{align*}
in which $\mathcal{F}_N$ is the unitary semiclassical Fourier transform
\[\mathcal{F}_N f(\xi)=N^{\frac{d}{2}} \int_{\R^d} e^{-2\pi i N x \cdot \xi} f(x)\, dx=N^{\frac{d}{2}}\hat{f}(N \xi) .\]
Clearly, by \eqref{eq:Anorm}, we have
\EQ{\label{eq:Anorm1}
\|\psi_0\, A\, \one_{Z}\|_{2\to 2}\lesssim \|A_1\|_{2\to 2}+ \|A_2\|_{2\to 2}.
}
Thus it suffices to bound $\|A_1\|_{2\to 2}$ and $\|A_2\|_{2\to 2}$.

We compute the integral kernel of $A_1$:
\[K_{A_1}(\xi, \eta)=\one_{\R^d\setminus Z(N^{-\rho})}(\xi)\  \one_{Z}(\eta)\ N^d \int_{\R^d} e^{2\pi i N x\cdot (\eta-\xi)} a(x, \eta)\, dx.\]
Note that the Euclidean distance $\|\xi-\eta \|\geq N^{-\rho}$ on the support of $K_{A_1}$. Hence by repeated integration by parts in $x$, we obtain that 
\[|K_{A_1}(\xi, \eta)|\leq C_{d,\rho} N^{d-\lceil \frac{d+10}{1-\rho}\rceil} \la \eta-\xi\ra^{-\lceil \frac{d+10}{1-\rho} \rceil}\leq C_{d,\rho} N^{-10}.\]
By Schur's test, we arrive at 
\EQ{\label{eq:A1norm}
\|A_1\|_{2\to 2}\leq C N^{-10}.
}

In view of $A_2$.
Note that 
\[Z(N^{-\rho}) \subset N^{-1} Y_1(N^{1-\rho}),\]
where $Y_1(N^{1-\rho})$ can be covered by the union of $Y_{1,j}:=Y_1(1)+2j$, $j\in \Z^d$, $\|j\|_{\infty}\leq N^{1-\rho}/2$.
Each $Y_{1,j}$ is a admissible set at scale $N$.
Thus by a rescaling of Theorem \ref{thm:main1} and triangle inequality, we have for $f\in L^2(\R^d)$,
\EQ{\nn
\|A_2 f\|
=&\sum_{\|j\|\leq N^{1-\rho}/2} \Big\| N^{\frac{d}{2}} \psi_0(x) \int_{\R^d} e^{2\pi i N x\cdot \xi}\ \one_{N^{-1}Y_{1,j}}(\xi) f(\xi)\, d\xi \Big\|_2\\
\les & \sum_{\|j\|\leq N^{1-\rho}/2} \Big\| N^{-\frac{d}{2}} \psi_0(x) \int_{\R^d} e^{2\pi i  x\cdot \eta}\ \one_{Y_{1,j}}(\eta) f(\eta/N)\, d\eta \Big\|_2\\
\les & N^{-\beta+d(1-\rho)} \| N^{-\frac{d}{2}} f(\eta/N)\|_2
=CN^{-\beta+d(1-\rho)}\|f\|_2.
}
Hence for $\rho=1-\beta/2d$, 
\EQ{\label{eq:A2norm}
\|A_2\|_{2\to 2}\leq C N^{-\frac{\beta}{2}}.
}
Combining \eqref{eq:Anorm1}, \eqref{eq:A1norm} with \eqref{eq:A2norm}, we obtain \eqref{eq:main claim3}.
This concludes the proof of Theorem~\ref{thm:main3}. 

\appendix

\section{Regular sets}\label{sec:appregular}
We show that certain operations preserve the class of $\delta$-regular sets if we allow to increase the regularity constant and shrink the scales. 
\begin{lemma}\label{lem:BDlemma21}
Let $X$ be a $\delta$-regular set with $\delta\in (0,d)$ and constant $C_R$, on scales $\alpha_0$ to $\alpha_1$. 
Fix $\lambda>0$ and $y\in \R^d$. Then the set $\tilde{X}:=y+\lambda X$ is a $\delta$-regular set with constant $C_R$ on scales $\lambda \alpha_0$ to $\lambda \alpha_1$.
\end{lemma}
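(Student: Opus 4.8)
The plan is to construct the Frostman measure witnessing the $\delta$-regularity of $\tilde X$ directly from that of $X$, via pushforward under the affine map $T(x):=y+\lambda x$. Let $\mu_X$ be the Borel measure from Definition~\ref{def:deltareg} associated with $X$ on scales $\alpha_0$ to $\alpha_1$. I would set $\mu_{\tilde X}:=\lambda^{\delta}\,T_{*}\mu_X$, that is $\mu_{\tilde X}(E):=\lambda^{\delta}\mu_X(T^{-1}(E))$ for Borel sets $E$. Since $T$ is an affine homeomorphism of $\R^d$ carrying $X$ onto $\tilde X=T(X)$, the set $\tilde X$ is again nonempty and closed, and $\mu_{\tilde X}$ is supported on $\tilde X$; this gives the standing hypotheses and the first bullet of Definition~\ref{def:deltareg}.

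For the two size estimates, the key observation is that $T$ maps $d$-dimensional cubes to $d$-dimensional cubes, multiplying all side lengths by $\lambda$: if $I$ is a cube of side length $r_I$, then $T^{-1}(I)=\lambda^{-1}(I-y)$ is a cube of side length $r_I/\lambda$, whose center is the $T^{-1}$-image of the center of $I$. Hence for any cube $I$ with $\lambda\alpha_0\le r_I\le\lambda\alpha_1$ we have $\alpha_0\le r_I/\lambda\le\alpha_1$, and the upper bound in Definition~\ref{def:deltareg} applied to $\mu_X$ yields $\mu_{\tilde X}(I)=\lambda^{\delta}\mu_X(T^{-1}(I))\le\lambda^{\delta}C_R(r_I/\lambda)^{\delta}=C_R r_I^{\delta}$. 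If in addition $I$ is centered at a point of $\tilde X$, then $T^{-1}(I)$ is a cube centered at a point of $X$ of side length in $[\alpha_0,\alpha_1]$, so the lower bound for $\mu_X$ gives $\mu_{\tilde X}(I)=\lambda^{\delta}\mu_X(T^{-1}(I))\ge\lambda^{\delta}C_R^{-1}(r_I/\lambda)^{\delta}=C_R^{-1}r_I^{\delta}$. This verifies all three conditions of Definition~\ref{def:deltareg} for $\tilde X$ with the same constant $C_R$ on scales $\lambda\alpha_0$ to $\lambda\alpha_1$.

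There is no genuine obstacle in this argument; the only point requiring care is the normalizing factor $\lambda^{\delta}$ in the definition of $\mu_{\tilde X}$. It is precisely this factor that cancels the $\lambda^{\delta}$ discrepancy between $r_I^{\delta}$ and $(r_I/\lambda)^{\delta}$, so that the regularity constant stays equal to $C_R$ rather than picking up a $\lambda^{\pm\delta}$ factor. The translation by $y$ plays no role at all beyond shifting supports and cube centers, which is why no change of constant or of scales is needed on that account.
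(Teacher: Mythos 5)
Your proposal is correct and is exactly the paper's argument: the paper defines $\mu_{\tilde{X}}(A):=\lambda^{\delta}\mu_X(\lambda^{-1}(A-y))$, which is precisely your normalized pushforward $\lambda^{\delta}T_*\mu_X$, and leaves the verification to the reader. You have simply written out the routine cube-rescaling check that the paper omits.
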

\begin{proof}
Taking the measure
\[\mu_{\tilde{X}}(A):=\lambda^{\delta}\mu_X (\lambda^{-1} (A-y)),\]
it is easy to verify.
\end{proof}

\begin{lemma}\label{lem:BDlemma22}
Let $X$ be a $\delta$-regular set with constant $C_R$ on scales $\alpha_0$ to $\alpha_1$. 
Fix $T>1$. Then $X$ is $\delta$-regular with constant $\tilde{C}_R:=2T^d C_R$ on scales $\alpha_0$ to $T\alpha_1$.
\end{lemma}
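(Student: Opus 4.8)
The plan is to keep the same Frostman measure $\mu_X$ and only verify the two defining inequalities of Definition~\ref{def:deltareg} for cubes whose side length lies in the newly added window $\alpha_1 < r_I \le T\alpha_1$; for $r_I \in [\alpha_0,\alpha_1]$ both inequalities already hold with the old constant $C_R \le \tilde{C}_R$, so nothing is needed there.

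For the upper bound I would fix a $d$-dimensional cube $I$ with $\alpha_1 < r_I \le T\alpha_1$ and cover it by a grid of axis-parallel cubes of side length \emph{exactly} $\alpha_1$: since $r_I \le T\alpha_1$, at most $\lceil r_I/\alpha_1\rceil \le \lceil T\rceil$ of them are needed in each coordinate direction, hence at most $\lceil T\rceil^d$ in all, say $I_1,\dots,I_M$ with $M \le \lceil T\rceil^d$. Each $I_j$ has side length $\alpha_1 \in [\alpha_0,\alpha_1]$, so the hypothesis gives $\mu_X(I_j) \le C_R\alpha_1^{\delta}$, and since $\alpha_1 < r_I$ and $\delta>0$ we have $\alpha_1^\delta \le r_I^\delta$. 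Summing, $\mu_X(I) \le \sum_j \mu_X(I_j) \le \lceil T\rceil^d C_R\, r_I^\delta \le 2T^d C_R\, r_I^\delta = \tilde{C}_R\, r_I^\delta$, using $\lceil T\rceil \le 2T$ for $T\ge1$. The point of choosing covering cubes of side exactly $\alpha_1$ (rather than subdividing $I$ into finer congruent subcubes) is that it keeps every auxiliary cube inside the admissible scale window no matter how small $\alpha_0$ is.

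For the lower bound I would take a cube $I$ centered at a point $x\in X$ with $\alpha_1 < r_I \le T\alpha_1$ and let $I'\subset I$ be the concentric cube of side length $\alpha_1$ (it fits since $r_I \ge \alpha_1$); it is centered at $x\in X$ and has side length $\alpha_1 \in [\alpha_0,\alpha_1]$, so the hypothesis yields $\mu_X(I)\ge \mu_X(I') \ge C_R^{-1}\alpha_1^\delta$. Since $r_I \le T\alpha_1$ we have $\alpha_1 \ge r_I/T$, and because $0<\delta<d$ and $T>1$ this gives $\alpha_1^\delta \ge T^{-\delta} r_I^\delta \ge T^{-d} r_I^\delta$. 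Hence $\mu_X(I) \ge (T^d C_R)^{-1} r_I^\delta \ge \tilde{C}_R^{-1} r_I^\delta$, as required. I do not anticipate any real obstacle in this proof: the only points needing a moment's care are the choice of auxiliary cubes (side exactly $\alpha_1$, so that Definition~\ref{def:deltareg} applies with no constraint linking $\alpha_0$ and $\alpha_1$) and the elementary inequalities $\lceil T\rceil \le 2T$ and $T^{-\delta}\ge T^{-d}$ that combine to give the constant $\tilde{C}_R = 2T^d C_R$.
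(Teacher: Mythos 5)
Your proof is correct and essentially identical to the paper's: the same covering of $I$ by at most $\lceil T\rceil^d$ grid cubes of side exactly $\alpha_1$ for the upper bound, and the same concentric cube of side $\alpha_1$ centered at the same point of $X$ for the lower bound. The one blemish --- the step $\lceil T\rceil^d \le 2T^d$, which really requires $\lceil T\rceil \le 2^{1/d}T$ and can fail for $T$ slightly above $1$ when $d\ge 2$ --- appears verbatim in the paper's own proof and is immaterial, since one could just as well take $\tilde{C}_R=(2T)^d C_R$ without affecting any application.
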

\begin{proof}
Let $I$ be a cube such that $\alpha_0\leq r_I \leq T\alpha_1$.
For $\alpha_0\leq r_I \leq \alpha_1$, the upper bound is immediate. 
For $\alpha_1< r_I \leq T\alpha_1$, $I$ can be covered by $\lceil T \rceil^d\leq 2 T^d$ cubes of side length $\alpha_1$ each, therefore
\[\mu_X(I)\leq 2T^d C_R \alpha_1^{\delta}\leq \tilde{C}_R r_I^{\delta}.\]

In view of the lower bound estimate, we assume $I$ is centered at a point in $X$. As before, we may assume $\alpha_1<r_I\leq T\alpha_1$. 
Let $I'\subset I$ be the cube with the same center and $r_{I'}=\alpha_1$.
Then 
\[\mu_X(I)\geq \mu_X(I')\geq C_R^{-1} \alpha_1^\delta\geq \tilde{C}_R^{-1} r_I^{\delta},\]
as claimed.
\end{proof}

\begin{lemma}\label{lem:BDlemma23}
Let $X$ be a $\delta$-regular set with constant $C_R$ on scales $\alpha_0$ to $\alpha_1$.
Fix $T\geq 1$. 
\begin{enumerate}
\item Suppose $\alpha_1\geq 2\alpha_0$,
the neighborhood $X+[-T\alpha_0, T\alpha_0]^d$ is $\delta$-regular with constant $\tilde{C}_R:=4^d T^d C_R$ on scales $2\alpha_0$ to $\alpha_1$.
\item Suppose that $\alpha_1\geq T\alpha_0$, 
then $X+[-T \alpha_0, T\alpha_0]^d$ is $\delta$-regular with constant $C_R'=4^d C_R$ on scales $T\alpha_0$ to $\alpha_1$.
\end{enumerate}
\end{lemma}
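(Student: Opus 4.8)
The plan is to construct, in both items, an explicit Frostman-type measure on the neighbourhood $\tilde X := X + [-T\alpha_0, T\alpha_0]^d$ by smearing out $\mu_X$: set $\mu_{\tilde X} := \mu_X * m_T$, where $m_T$ is the probability measure obtained by normalising Lebesgue measure on $[-T\alpha_0, T\alpha_0]^d$. By construction $\supp \mu_{\tilde X} = \tilde X$ and $\mu_{\tilde X}(\R^d) = \mu_X(\R^d)$, and for any Borel set $A$ one has $\mu_{\tilde X}(A) = \int_{\R^d} \mu_X(A - x)\, dm_T(x)$. It then remains to verify the two-sided cube bounds of Definition~\ref{def:deltareg} on the claimed scale ranges.

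For the upper bound, if $I$ is a cube with side $r_I$ lying in the allowed range (which is contained in $[\alpha_0,\alpha_1]$ because $T\ge 1$), then each translate $I-x$ is again a cube of side $r_I\in[\alpha_0,\alpha_1]$, so $\mu_X(I-x)\le C_R r_I^\delta$ for every $x$, and integrating against $m_T$ gives $\mu_{\tilde X}(I)\le C_R r_I^\delta$, which is below the claimed constant. The lower bound is the substantive part, and I would split it according to the size of $r_I$ relative to $T\alpha_0$. Fix a cube $I$ centred at $z\in\tilde X$ and pick $y^*\in X$ with $\|z-y^*\|_\infty\le T\alpha_0$. If $r_I$ is comparatively large, say $r_I\ge 8T\alpha_0$, then the concentric sub-cube $B'$ of side $r_I/2$ centred at $y^*$ satisfies $\mu_X(B')\ge C_R^{-1}(r_I/2)^\delta$ by the lower Frostman bound (its side lies in $[\alpha_0,\alpha_1]$), while for every $y\in B'$ the cube $I-y$ already contains all of $[-T\alpha_0,T\alpha_0]^d$, so $m_T(I-y)=1$; hence $\mu_{\tilde X}(I)\ge\mu_X(B')\ge 2^{-d}C_R^{-1}r_I^\delta$. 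If instead $r_I$ is small, i.e.\ bounded above by a fixed multiple of $T\alpha_0$, I would use the $\alpha_0$-scale sub-cube $B:=y^*+[-\alpha_0/2,\alpha_0/2]^d$, for which $\mu_X(B)\ge C_R^{-1}\alpha_0^\delta$; the key point is that for each $y\in B$ the translate $I-y$ meets $[-T\alpha_0,T\alpha_0]^d$ in a box whose side length in each coordinate is at least a fixed fraction of $\alpha_0$, so $m_T(I-y)\ge c_d T^{-d}$, and therefore $\mu_{\tilde X}(I)\ge c_d T^{-d}C_R^{-1}\alpha_0^\delta$, which exceeds a constant multiple of $T^{-d}C_R^{-1}r_I^\delta$ once one uses $r_I\le C_d\, T\alpha_0$.

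The hard part will be this last estimate — the claim that $m_T(I-y)\ge c_d T^{-d}$ uniformly for $y\in B$ — and, more to the point, making the whole lower bound hold with the precise constant $4^d T^d C_R$ (resp.\ $4^d C_R$). This is exactly where the scale hypothesis enters: one needs $r_I$ bounded below by $2\alpha_0$ rather than merely by $\alpha_0$, because for $y\in B$ the centre $z-y$ of $I-y$ may lie at sup-norm distance $T\alpha_0+\alpha_0/2$ from the origin, and a cube of side only $\alpha_0$ about such a point can fail to overlap $[-T\alpha_0,T\alpha_0]^d$ in positive measure, whereas side $\ge 2\alpha_0$ guarantees an overlap of length $\ge\alpha_0/2$ in every coordinate. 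For item~(2), when $T\ge 2$ the lower scale $T\alpha_0$ already dominates $2\alpha_0$, so one may simply invoke item~(1) with $\alpha_0$ replaced by $T\alpha_0/2$ and $T$ by $2$; the remaining range $1\le T<2$ is handled by the same coordinate-by-coordinate bookkeeping, in the spirit of Lemmas~2.1--2.3 of~\cite{BD}. The constants produced by this crude splitting are not optimal; a more careful choice of sub-cube, with side $\max(\alpha_0, c\, r_I)$ and a careful accounting of the loss in each coordinate, recovers the stated $4^d T^d$ dependence.
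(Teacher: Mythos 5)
Your construction is essentially the paper's: smear $\mu_X$ by (normalized) Lebesgue measure on $[-T\alpha_0,T\alpha_0]^d$, obtain the upper bound from translation invariance of the cube bound, and obtain the lower bound by pairing the mass of a sub-cube centred at a nearby point $y^*\in X$ with the overlap of the translated cubes with $[-T\alpha_0,T\alpha_0]^d$; your observation that $r_I\ge 2\alpha_0$ is what guarantees an overlap of length $\ge\alpha_0/2$ in each coordinate is exactly the point of the paper's proof. The only difference is that the paper needs no large/small case split: it always takes the sub-cube $I'$ of side $r_I/2$ (your ``more careful choice'') and translate sets $J$ of side $\alpha_0$ for item (1) and $T\alpha_0$ for item (2), which directly yields the stated constants $4^dT^dC_R$ and $4^dC_R$.
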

\begin{proof}
Let $\tilde{X}:=X+[-T\alpha_0, T\alpha_0]^d$ and define $\mu_{\tilde{X}}$ supported on $\tilde{X}$ by convolution
\[\mu_{\tilde{X}}(A):=\frac{1}{(T\alpha_0)^d}\int_{[-T\alpha_0, T \alpha_0]^d} \mu_X(A+y)\, dy.\]
Let $I$ be a cube such that $M \alpha_0\leq r_I\leq \alpha_1$ with $M\geq 1$. 
Then
\[\mu_{\tilde{X}}(I)\leq 2^d C_R r_I^{\delta},\]
which proves the upper bound estimates for both cases.

Now assume that $I$ is centered at a point $x_1\in \tilde{X}$. Take $x_0\in X$ such that $x_0\in x_1+[-T\alpha_0, T\alpha_0]^d$, 
and $I'$ be the cube centered at $x_0$ with side length $r_{I'}=r_I/2$.
Then 
\[\mu_X(I')\geq C_R^{-1} (r_I/2)^\delta\geq 2^{-d} C_R^{-1} r_I^{\delta}.\]

Let $J=x_0-x_1+[- \alpha_0/2,  \alpha_0/2]^d$, then $J\cap [-T\alpha_0, T\alpha_0]^d$ contains a cube with side length at least $\alpha_0/2$.
Clearly, $I'\subset I+y$ for any $y\in J$. Hence
\[\mu_{\tilde{X}}(I)\geq \frac{1}{(2T)^d} \mu_X(I')\geq \tilde{C}_R^{-1} r_I^{\delta},\]
which proves the lower bound estimate for (1).

Let $J=x_0-x_1+[- T\alpha_0/2, T\alpha_0/2]^d$, then $J\cap [-T\alpha_0, T\alpha_0]^d$ contains a cube with side length at least $T \alpha_0/2$.
Clearly, $I'\subset I+y$ for any $y\in J$. Hence
\[\mu_{\tilde{X}}(I)\geq \frac{1}{2^d} \mu_X(I')\geq (C_R')^{-1} r_I^{\delta},\]
which proves the lower bound estimate for (2).
\end{proof}

\begin{lemma}\label{lem:BDlemma26}
Let $X$ be a $\delta$-regular set with constant $C_R$ on scales $\alpha_0$ to $\alpha_1$, and $0<\delta<d$. Fix an integer 
\EQ{\label{def:L}
L\geq (2^{d/2}\sqrt{2d+1}C_R)^{\frac{2}{d-\delta}}.
}
Assume that $I$ is a cube with $\alpha_0\leq r_I/L\leq r_I\leq \alpha_1$ and $I_1,...,I_{L^d}$ is the partition of $I$ into cubes of side length $r_I/L$. 
Then there exists $\ell$ such that $X\cap I_{\ell}=\emptyset$.
\end{lemma}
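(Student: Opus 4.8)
The plan is to argue by contradiction. Suppose every subcube $I_\ell$ ($\ell=1,\ldots,L^d$) meets $X$. For each $\ell$ I would pick a point $x_\ell\in X\cap I_\ell$ and consider the cube $Q_\ell$ of side length $s:=r_I/L$ centered at $x_\ell$. Since $\alpha_0\le s$ by hypothesis and $s\le r_I\le\alpha_1$, and since $Q_\ell$ is a cube centered at a point of $X$, the lower bound in Definition~\ref{def:deltareg} gives $\mu_X(Q_\ell)\ge C_R^{-1}s^\delta$. (Here I would take the cells $I_\ell$ to be half-open and $Q_\ell$ open, so that no boundary issues arise.)

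Next I would record two facts about this family. Since $x_\ell\in I_\ell\subset I$, every $Q_\ell$ lies in the cube $\tilde I$ concentric with $I$ of side length $r_I+s$. Moreover the $x_\ell$ lie in distinct cells of a grid of mesh $s$, and any cube of side $s$ meets at most $2^d$ such cells, so $\sum_\ell\one_{Q_\ell}\le 2^d$ pointwise. Integrating this bound against $\mu_X$,
\[
L^d\,C_R^{-1}s^\delta\ \le\ \sum_\ell\mu_X(Q_\ell)\ =\ \int\Big(\sum_\ell\one_{Q_\ell}\Big)\,d\mu_X\ \le\ 2^d\,\mu_X(\tilde I).
\]
The heart of the matter is then an upper bound of the shape $\mu_X(\tilde I)\le C_R r_I^\delta+(\text{lower order in }s)$. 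I would split $\tilde I$ (of side $(L+1)s$) into $3^d$ boxes by dividing each coordinate into the three intervals of lengths $s$, $r_I-s$, $s$. The single box all of whose sides equal $r_I-s$ is a cube of side $\le r_I$, hence of $\mu_X$-measure $\le C_R r_I^\delta$ by the upper bound in Definition~\ref{def:deltareg}; every other box has at least one side of length $s$, and covering such boxes by cubes of side $s$ and summing the upper bound $C_R s^\delta$ controls their total measure by $C_R s^\delta\big[(L+1)^d-(L-1)^d\big]\le 2d\,(L+1)^{d-1}C_R s^\delta$. Feeding this back (and using $r_I=Ls$) yields $L^d\le 2^dC_R^2\big(L^\delta+2d(L+1)^{d-1}\big)$. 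Dividing by $L^\delta$ and, after the elementary estimate on $(L+1)^{d-1}L^{-\delta}$ in the relevant range discussed below, invoking the choice of $L$ in~\eqref{def:L} — which is exactly the inequality $L^{d-\delta}\ge 2^d(2d+1)C_R^2$ — produces a contradiction, so some $I_\ell$ must be disjoint from $X$.

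The main obstacle is the final numerology: to land on precisely the constant in~\eqref{def:L} one must cover the boundary boxes of $\tilde I$ economically — they are genuinely thin, so they should contribute only a lower-order amount of $\mu_X$-mass — and keep the overlap constant $2^d$ sharp. For $d=2$ the crude estimate $\mu_X(\tilde I)\le 2^dC_R r_I^\delta$, obtained by covering $\tilde I$ by $2^d$ cubes of side $r_I$, is already sufficient; for larger $d$, and especially when $\delta<d-1$, one must choose the scale at which the boundary boxes are covered with more care. Everything else in the argument is an immediate consequence of the two defining inequalities of $\delta$-regularity.
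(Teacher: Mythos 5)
Your skeleton is the same as the paper's: argue by contradiction, attach to each cell a cube of side $s=r_I/L$ centred at a point of $X$, use the lower regularity bound to give each such cube mass $\ge C_R^{-1}s^\delta$, control the overlap of these cubes, and play the total against an upper bound for the mass of the slightly enlarged cube $\tilde I$. Your overlap count $2^d$ is correct (and in fact cleaner than the paper's, which enlarges each cell to $I_\ell+[-s/2,s/2]^d$ and quotes an overlap constant $2d+1$). The problem is entirely in your upper bound for $\mu_X(\tilde I)$, and it is a genuine gap, not just loose numerology: covering the boundary shell of $\tilde I$ by cubes of side $s$ costs $(L+1)^d-(L-1)^d\approx 2dL^{d-1}$ cubes, hence a term $2dC_R(L+1)^{d-1}s^\delta$, and after dividing your final inequality by $L^\delta$ this contributes $\sim L^{d-1-\delta}$. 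For $\delta<d-1$ this is unbounded in $L$, so the inequality $L^d\le 2^dC_R^2\bigl(L^\delta+2d(L+1)^{d-1}\bigr)$ is consistent with \eqref{def:L} and no contradiction results. There is no ``relevant range'' that saves you: the lemma must hold for every integer $L$ satisfying \eqref{def:L} and every $\delta\in(0,d)$, and you explicitly leave the needed ``more careful choice of scale'' unresolved, so the proof does not close.

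The fix is to not decompose $\tilde I$ at all. Since $\tilde I$ has side $(L+1)s\le 2r_I$, its mass is controlled by a \emph{single} application of the upper regularity bound at a scale comparable to $r_I$ rather than to $s$; this is what the paper does (after rescaling to $s=1$ it writes $\mu_X([-1/2,L+1/2]^d)\le C_R(L+1)^\delta\le 2^\delta C_R L^\delta$; if one wants to stay strictly below the scale $\alpha_1$ one covers $\tilde I$ by $2^d$ cubes of side $r_I$ at the cost of a factor $2^d$). Either way the upper bound is $\le C(d)\,C_R\,L^\delta s^\delta$ with a purely dimensional constant, the shell never appears, and the inequality becomes $L^{d-\delta}\le C(d)C_R^2$, which \eqref{def:L} is designed to violate. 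Be aware that even then the constants must be checked against \eqref{def:L}: with your (correct) overlap $2^d$ and the $2^d$-cube cover the requirement is $L^{d-\delta}>4^dC_R^2$, which for $d\ge3$ is slightly stronger than the $2^d(2d+1)C_R^2$ guaranteed by \eqref{def:L}, so to recover the stated constant one needs either the paper's one-shot bound $C_R(L+1)^\delta$ (giving the factor $2^\delta$ instead of $2^d$) or a corresponding adjustment of \eqref{def:L}.
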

\begin{proof}
Using Lemma \ref{lem:BDlemma21}, it suffices to consider $I=[0,L]^d$, $\alpha_0\leq 1\leq L\leq \alpha_1$.
We argue by contradiction. Assume that each $I_{\ell}$ intersects $X$. 
Then 
$I'_{\ell}:=I_{\ell}+[-1/2,1/2]^d$ contains a unit cube centered at a point in $X$ and thus
\[\mu_X(I'_{\ell})\geq C_R^{-1},\ \ \forall 1\leq \ell\leq L^d.\]
On the other hand, 
\[\bigcup_{\ell=1}^{L^d} I'_{\ell}=\left[-\frac12, L+\frac12\right]^d,\]
and each point in $\left[-1/2, L+1/2\right]^d$ can be covered by at most $2d+1$ of the cubes $I'_{\ell}$.
Therefore
\[C_R^{-1} L^d\leq \sum_{\ell=1}^{L^d}\mu_X(I'_{\ell})\leq (2d+1) \mu_X \Big(\left[-\frac12, L+\frac12\right]^d\Big)\leq (2d+1) C_R (L+1)^{\delta},\]
which contradicts \eqref{def:L}.
\end{proof}

Recall our definition of $\mathcal{C}_n$ and porosity in Definition \ref{def:porous}.
\begin{lemma}\label{lem:BDlemma210}
Let $X\subset [-1,1]^d$ be a $\delta$-regular set with constant $C_R$ on scales $\alpha_0$ to $\alpha_1$. 
Let $L$ satisfy \eqref{def:L}, and take $n\in \Z$ such that $\alpha_0\leq L^{-n-1}\leq L^{-n}\leq \alpha_1$. 
Then $X$ is porous at scale $L$ with depth $n$.
\end{lemma}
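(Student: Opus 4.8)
The plan is to reduce the $d$-dimensional statement directly to the combinatorial ``missing subcube'' estimate of Lemma~\ref{lem:BDlemma26}. Recall what must be shown: given $Q\in\cC_n$ with $Q\cap X\ne\emptyset$, produce $Q'\in\cC_{n+1}$ with $Q'\subset Q$ and $Q'\cap X=\emptyset$. By construction $Q$ is a cube of side length $L^{-n}$, and the cubes of $\cC_{n+1}$ contained in $Q$ are exactly the $L^d$ cubes obtained by partitioning $Q$ into congruent subcubes of side length $L^{-(n+1)}=L^{-n}/L$. So the statement is precisely the conclusion of Lemma~\ref{lem:BDlemma26} applied to the cube $I=Q$, provided the scale hypotheses of that lemma are met.

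First I would fix such a $Q\in\cC_n$ meeting $X$, set $I:=Q$, so that $r_I=L^{-n}$ and $r_I/L=L^{-(n+1)}$. The hypothesis of the present lemma is exactly $\alpha_0\le L^{-n-1}\le L^{-n}\le\alpha_1$, i.e. $\alpha_0\le r_I/L\le r_I\le\alpha_1$, which is the scale requirement in Lemma~\ref{lem:BDlemma26}. The integer $L$ is assumed to satisfy \eqref{def:L}, which is the other hypothesis of that lemma. Hence Lemma~\ref{lem:BDlemma26}, applied with this $I$ and its partition $I_1,\dots,I_{L^d}$ into subcubes of side length $r_I/L$, yields an index $\ell$ with $X\cap I_\ell=\emptyset$. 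Taking $Q':=I_\ell$ gives $Q'\in\cC_{n+1}$ (it is one of the side-length-$L^{-(n+1)}$ subcubes partitioning $[-1,1]^d$), $Q'\subset Q$, and $Q'\cap X=\emptyset$, which is exactly the porosity condition at scale $L$ with depth $n$.

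There is essentially no obstacle here beyond bookkeeping: the real content is in Lemma~\ref{lem:BDlemma26}, which has already been established, so the proof is a one-line invocation once the scale matching is spelled out. The only point worth a sentence is to note that the partition of $[-1,1]^d$ into $\cC_{n+1}$ restricts, on each $Q\in\cC_n$, to the uniform partition of $Q$ into $L^d$ subcubes of side length $L^{-(n+1)}$ — this is immediate from $L^{n+1}\mid$ (the lattice structure) since each cube of $\cC_n$ has side length $L^{-n}$ and is a union of $L^d$ cubes of $\cC_{n+1}$.

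\begin{proof}
Let $Q\in\cC_n$ with $Q\cap X\neq\emptyset$. By Definition~\ref{def:porous}, $Q$ is a cube of side length $L^{-n}$, and since $\cC_{n+1}$ is the partition of $[-1,1]^d$ into cubes of side length $L^{-(n+1)}$, the subcubes of $\cC_{n+1}$ contained in $Q$ are exactly the $L^d$ congruent cubes $I_1,\dots,I_{L^d}$ obtained by partitioning $Q$ into subcubes of side length $L^{-n}/L$. Apply Lemma~\ref{lem:BDlemma26} with $I:=Q$, so that $r_I=L^{-n}$ and $r_I/L=L^{-(n+1)}$. The hypothesis $\alpha_0\le L^{-n-1}\le L^{-n}\le\alpha_1$ gives $\alpha_0\le r_I/L\le r_I\le\alpha_1$, and $L$ satisfies \eqref{def:L}; hence Lemma~\ref{lem:BDlemma26} provides an index $\ell$ with $X\cap I_\ell=\emptyset$. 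Taking $Q':=I_\ell\in\cC_{n+1}$ we have $Q'\subset Q$ and $Q'\cap X=\emptyset$, which is precisely the porosity condition at scale $L$ with depth $n$.
\end{proof}
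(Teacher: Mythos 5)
Your proof is correct and is exactly the intended argument: the paper states Lemma~\ref{lem:BDlemma210} without proof, as an immediate consequence of Lemma~\ref{lem:BDlemma26}, and your application of that lemma with $I=Q$ (noting $r_I=L^{-n}$, $r_I/L=L^{-n-1}$, so the scale hypotheses match) together with the nesting of $\cC_{n+1}$ inside $\cC_n$ is precisely the omitted bookkeeping.
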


\section{Proof of Lemma \ref{lem:regularsetdamping}}\label{sec:app}
We follow the proofs of Theorem 3.2 and Lemma 4.1 in \cite{JZ}.
Let us start with introducing some notations.
\subsection{Hilbert transform}
Let $\mathcal{H}_0$ be the standard Hilbert transform defined as convolution with p.v.$\frac{1}{\pi x}$: For $f\in C_0^{\infty}(\R)$, (or more generally, $f\in L^1(\R, \la x \ra^{-1}\, dx)$)
\[\mathcal{H}_0(f)(x):=\frac{1}{\pi}\lim_{\varepsilon\to 0^+}\int_{|x-t|\geq \varepsilon}\frac{f(t)}{x-t}\, dt.\]
Let $\mathcal{H}$ be the modified Hilbert transform with the integral kernel that decays like $|x|^{-2}$ as $|x|\to \infty$:
\[\mathcal{H}(f)(x):=\frac{1}{\pi}\lim_{\varepsilon\to 0^+}\int_{|x-t|\geq \varepsilon}f(t)\Big(\frac{1}{x-t}+\frac{t}{t^2+1}\Big)\, dt, \quad f\in L^1(\R, \la x\ra^{-2}\, dx).\]
The advantage of $\mathcal{H}$ is that it applies to a larger space that contains $L^\infty(\R)$ as well as functions the grow like $|x|^{1-\epsilon}$ as $|x|\to \infty$.

If $f\in L^1(\R, \la x\ra^{-1}\, dx)$, then $\mathcal{H}(f)$ differs from $\mathcal{H}_0(f)$ by a constant.
Moreover, we have the inversion formula for all $f\in L^1(\R, \la x\ra^{-2}\, dx)$ with $\mathcal{H}(f)\in L^1(\R, \la x\ra^{-2}\, dx)$:
\EQ{\label{eq:Hinversion}
\mathcal{H}(\mathcal{H}(f))=-f+c(f),
}
where $c(f)$ is a real constant depending on $f$.

We will use the following example later in the proof.
\begin{example}\cite[Example 2.3]{JZ}
Let $f(x)=\log(x^2+1)$, then we can compute
\EQ{\label{eq:Hlogx^2+1'}
\mathcal{H}(f)'(x)=\mathcal{H}_0(f')(x)=-\frac{2}{x^2+1}.
}
\end{example}

\subsection{Hardy space and outer functions}
We recall the definition of Hardy space on the real line
\[H^2=H^2(\R)=\{ f\in L^2(\R): \supp\hat{f}\subset [0, \infty)\}.\]

If $f\in L^2(\R)$, then $f+i\mathcal{H}_0(f)\in H^2(\R)$.

The space of modulus of functions in $H^2$ can be characterized by the logarithmic integral: for $\omega\in L^2$, $\omega\geq 0$, we define
\[\mathcal{L}(\omega):=\int_{\R}\frac{\log{\omega(x)}}{1+x^2}\, dx.\]
\begin{theorem}\cite[Sec.1.5]{HJ}
If $f\in H^2$, and $\mathcal{L}(|f|)=-\infty$, then $f\equiv 0$. On the other hand, if $\omega\in L^2$, and $\mathcal{L}(\omega)>-\infty$, then there exists a function $f\in H^2$ with $|f|=\omega$, unique up to a multiplication by a complex constant with unit modulus.
\end{theorem}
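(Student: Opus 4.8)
The plan is to derive this from the classical theory of Hardy spaces and outer functions on the upper half--plane $\C^{+}=\{\Im z>0\}$, exploiting that, up to a constant, $\mathcal{L}(\omega)=\pi\int_\R P_i(x)\log\omega(x)\,dx$ is the Poisson integral of $\log\omega$ evaluated at $z=i$. Throughout I use the standard dictionary: an $f\in H^2(\R)$ is the nontangential boundary trace of a holomorphic function on $\C^{+}$ with $\sup_{y>0}\int_\R|f(x+iy)|^2\,dx<\infty$, and conversely such interior functions have nontangential limits a.e.; I also use Fatou's theorem for Poisson integrals of functions in $L^1(\R,\tfrac{dx}{1+x^2})$.

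For the first assertion I would prove the contrapositive: if $f\in H^2$ and $f\not\equiv 0$ then $\mathcal{L}(|f|)>-\infty$. The positive part is trivial, since $\log^{+}|f|\le|f|\in L^2(\R)\subset L^1(\R,\tfrac{dx}{1+x^2})$ by Cauchy--Schwarz. For the negative part, transfer to the unit disk $\D$ by the Cayley map $w\mapsto i\tfrac{1+w}{1-w}$, which carries harmonic measure of $\C^{+}$ at $i$ to $\tfrac{d\theta}{2\pi}$ on the circle and introduces only the factor $\log|1-e^{i\theta}|\in L^1(\mathbb{T})$ into the log--modulus; this reduces matters to showing $\log|g^{*}|\in L^1(\mathbb{T})$ for $g\in H^2(\D)\setminus\{0\}$, which follows from Jensen's inequality $\log|g(0)|\le\tfrac{1}{2\pi}\int_0^{2\pi}\log|g^{*}(e^{i\theta})|\,d\theta$ applied to $g(w)/w^{k}$ when $g$ vanishes to order $k$ at the origin. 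Contraposing gives $\mathcal{L}(|f|)=-\infty\Rightarrow f\equiv 0$.

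For the second assertion, given $\omega\in L^2(\R)$, $\omega\ge 0$, with $\mathcal{L}(\omega)>-\infty$, note first that $\log\omega\in L^1(\R,\tfrac{dx}{1+x^2})$: its positive part is controlled by $\log^{+}\omega\le\omega\in L^2\subset L^1(\tfrac{dx}{1+x^2})$, and integrability of the negative part is exactly the hypothesis. Hence $u(z):=\int_\R P_z(t)\log\omega(t)\,dt$ is a well--defined harmonic function on $\C^{+}$; since $\C^{+}$ is simply connected it has a harmonic conjugate $v$, unique up to an additive real constant, and I set $f:=e^{u+iv}$, holomorphic and zero--free on $\C^{+}$ (equivalently, $f=\omega\,e^{i\mathcal{H}(\log\omega)}$ on $\R$ in terms of the modified Hilbert transform $\mathcal{H}$ of the excerpt). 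That $f\in H^2$ follows from Jensen's inequality applied to the probability measure $P_{x+iy}(t)\,dt$: $|f(x+iy)|^2=e^{2u(x+iy)}\le\int_\R P_{x+iy}(t)\,\omega(t)^2\,dt$, and integrating in $x$ with $\int_\R P_{x+iy}(t)\,dx=1$ (Fubini) yields $\int_\R|f(x+iy)|^2\,dx\le\|\omega\|_2^2$ uniformly in $y>0$. Finally, Fatou's theorem gives that $u$ has nontangential limit $\log\omega$ a.e., so $|f|=\omega$ a.e. on $\R$.

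For uniqueness one observes that the $f$ just constructed is \emph{outer}: $\log|f(z)|=\int_\R P_z(t)\log\omega(t)\,dt$ for $z\in\C^{+}$. Any other outer $H^2$ function $\tilde f$ with $|\tilde f|=\omega$ a.e. on $\R$ then satisfies $\log|\tilde f(z)|=\int_\R P_z(t)\log\omega(t)\,dt=\log|f(z)|$, so $\tilde f/f$ is a zero--free holomorphic map from $\C^{+}$ into the unit circle, hence a unimodular constant; a general $H^2$ function with boundary modulus $\omega$ differs from $f$ by a unimodular constant times an inner factor, which is the precise content of the uniqueness clause. The step demanding the most care is this boundary--interior passage --- that the Poisson integral of $\log\omega$ genuinely returns to $\omega$ nontangentially, that the resulting $f$ actually lies in $H^2$ (not merely in the Nevanlinna class), and, for the first assertion, that membership in $H^2$ forces $\log|f|$ to be integrable against $\tfrac{dx}{1+x^2}$; everything else is convexity (Jensen) together with Fubini.
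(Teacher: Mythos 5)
Your proof is correct and is precisely the standard Hardy-space argument that the paper is implicitly invoking by citing \cite[Sec.~1.5]{HJ} without proof: Jensen's inequality (after a Cayley transfer to the disk) for the necessity of $\mathcal{L}(|f|)>-\infty$, and the outer-function/Poisson-integral construction --- with Jensen again plus Fubini for the uniform $L^2$ bound on horizontal lines, and Fatou for recovering $|f|=\omega$ on the boundary --- for the existence. Your reading of the uniqueness clause, namely that $f$ is unique up to a unimodular constant \emph{within the class of outer functions} while a general $H^2$ solution differs by an additional inner factor, is the correct interpretation of the (slightly loosely stated) theorem and is in fact how it is used later in the paper, where only outer functions are constructed.
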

If $\mathcal{L}(\omega)>-\infty$. Let $\Omega=-\log{\omega}$, then $\Omega\in L^1(\R, \la x\ra^{-2}\, dx)$. Therefore we can define 
$\tilde{\Omega}=\mathcal{H}(\Omega)$ and take
\EQ{\label{def:outer}
f=ae^{-\Omega+i\tilde{\Omega}},\ \ |a|=1.
}
We call functions of the form \eqref{def:outer} for general $\Omega\in L^1(\R, \la x\ra^{-2}\, dx)$ {\em outer functions}.
The class of outer functions is closed under multiplications. Moreover if two outer functions have the same modulus, then they differ by a complex constant with unit modulus.

The following lemma gives a sufficient condition of a function to be the modulus of the Fourier transform of a function supported in $[0,\sigma]$.
\begin{lemma}\label{lem:seven}\cite[Theorem 1]{seven}
Assume that $\omega=e^{-\Omega}\in L^2$ and $\mathcal{L}(\omega)>-\infty$. In addition, we assume that $\omega^2 e^{2\pi i \sigma x}$ is an outer function. Then there exsits $\psi\in L^2$ with $\supp \psi\subset [0,\sigma]$ and $|\what{\psi}|=\omega$.
\end{lemma}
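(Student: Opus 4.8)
The plan is to produce $\psi$ as the inverse Fourier transform of a single explicit outer function and to read off the support condition from a Paley--Wiener description of $L^2([0,\sigma])$. All normalizations follow the convention $\hat g(\xi)=\int_{\R}g(x)e^{-2\pi i x\xi}\,dx$, so that $H^2(\R)=\{g\in L^2:\ \supp\hat g\subset[0,\infty)\}$ consists of boundary values of Hardy functions on the upper half plane.

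First I would set up the outer function. Since $\mathcal{L}(\omega)>-\infty$ and $\omega=e^{-\Omega}$, the function $\Omega$ lies in $L^1(\R,\langle x\rangle^{-2}dx)$, so $\tilde\Omega:=\mathcal{H}(\Omega)$ is defined and $F:=e^{-\Omega+i\tilde\Omega}$ is an outer function with $|F|=\omega$; because $|F|\in L^2$ and $\mathcal{L}(|F|)>-\infty$, the Hardy-space theory recalled above gives $F\in H^2(\R)$. I then \emph{define} $\psi\in L^2(\R)$ by $\hat\psi=\overline{F}$, i.e. $\psi(x)=\int_{\R}\overline{F(\xi)}\,e^{2\pi i x\xi}\,d\xi$. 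Plancherel gives $\psi\in L^2$, and $|\hat\psi|=|F|=\omega$, which is the required modulus identity.

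The remaining point, and the only place the extra hypothesis enters, is the support statement. I would use the characterization: for $\psi\in L^2(\R)$ one has $\supp\psi\subset[0,\sigma]$ if and only if $\overline{\hat\psi}\in H^2(\R)$ and $e^{2\pi i\sigma x}\hat\psi\in H^2(\R)$ --- chasing Fourier supports, the first condition is equivalent to $\supp\psi\subset[0,\infty)$ and the second to $\supp\psi\subset(-\infty,\sigma]$. The first holds by construction, since $\overline{\hat\psi}=F\in H^2(\R)$. For the second, the hypothesis that $\omega^2 e^{2\pi i\sigma x}$ is outer means, by the definition of outer functions applied with modulus $\omega^2=e^{-2\Omega}$, that $\omega^2 e^{2\pi i\sigma x}=a\,e^{-2\Omega+i\mathcal{H}(2\Omega)}=a\,e^{-2\Omega+2i\tilde\Omega}$ for some unimodular $a$; comparing arguments forces $2\pi\sigma x=2\tilde\Omega(x)+c$ for a real constant $c$. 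Hence
\[
e^{2\pi i\sigma x}\hat\psi=e^{2\pi i\sigma x}e^{-\Omega-i\tilde\Omega}=e^{-\Omega+i(2\pi\sigma x-\tilde\Omega)}=e^{ic}\,e^{-\Omega+i\tilde\Omega}=e^{ic}F\in H^2(\R),
\]
so $\supp\psi\subset(-\infty,\sigma]$, and therefore $\supp\psi\subset[0,\sigma]$, which would complete the proof.

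I do not expect a genuine obstacle. The conceptual content is simply that $\overline{\hat\psi}\cdot\big(e^{2\pi i\sigma x}\hat\psi\big)=\omega^2 e^{2\pi i\sigma x}$, so splitting this product into its two $H^2$ factors of modulus $\omega$ is possible exactly when the product is outer. The care needed is bookkeeping: fixing the Fourier normalization (which pins down the interval $[0,\sigma]$ rather than $[-\sigma,0]$), tracking that the conjugate function is determined only up to the additive constant absorbed into $e^{ic}$, and checking that $F$ genuinely lies in $H^2(\R)$ rather than merely in the Nevanlinna class (this is where $\omega\in L^2$ is used). This is essentially \cite[Theorem~1]{seven}, and the argument above is a self-contained rendering of it.
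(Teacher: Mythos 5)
Your proposal is correct, but there is no internal proof to compare it against: the paper imports this statement verbatim as \cite[Theorem 1]{seven} and never proves it. What you have written is in substance the standard proof of that cited theorem. The key identity $\ol{\what{\psi}}\cdot\big(e^{2\pi i\sigma x}\what{\psi}\big)=\omega^2e^{2\pi i\sigma x}$, together with the observation that outerness of this product is exactly what permits splitting it into two $H^2$ factors of modulus $\omega$, is the content of the result, and your Fourier-support verification of the two-sided Paley--Wiener characterization is sound under the stated normalization. The one point deserving care is the sign in the definition of an outer function: with the paper's conventions ($\mathcal{H}_0(f)(x)=\frac1\pi\,\mathrm{p.v.}\int\frac{f(t)}{x-t}\,dt$ and $f+i\mathcal{H}_0(f)\in H^2$), the boundary value of the upper-half-plane outer function with modulus $e^{-\Omega}$ is $e^{-\Omega-i\mathcal{H}(\Omega)}$, not $e^{-\Omega+i\mathcal{H}(\Omega)}$ as in the paper's displayed definition \eqref{def:outer}; the paper itself silently switches to the former sign when verifying that $\tilde\omega^2e^{2\pi i\sigma x}$ is outer in the proof of Theorem~\ref{thm:multiplier}. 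Under the consistent sign your construction as written lands on $[-\sigma,0]$ rather than $[0,\sigma]$; interchanging the roles of $F$ and $\ol{F}$ (equivalently, replacing $\psi(x)$ by $\psi(-x)$) repairs this, the argument being entirely symmetric under that swap. Since you explicitly flag this normalization as the bookkeeping to be pinned down, I regard it as a convention issue rather than a gap.
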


\subsection{An effective multiplier theorem}
We prove an effective multiplier theorem. 
This proof is essentially in \cite[Section 3]{JZ}, the only change we make lies in the definition of $k(x)$ below.
Our modified definition makes sure that $k(x)$ is a constant function in a neighborhood of $0$, which leads to a pointwise lower bound of $\what{\psi}(x)$ on the whole interval $[-3/4,3/4]$.

\begin{theorem}\label{thm:multiplier}
Assume that $0<\omega\leq 1$ satisfies $\mathcal{L}(\omega)>-\infty$, and 
\[\|\mathcal{H}(\Omega)'\|_{L^\infty}\leq \frac{\pi}{2}\sigma,\]
where $0<\sigma<1/10$, $\Omega=-\log{\omega}$. Then there exists $\psi\in L^2(\R)$ with
\[\supp\psi \subset [0,\sigma],\ \ |\what{\psi}|\leq \omega,\]
and 
\[|\what{\psi}|\geq \frac{\sigma^{10}}{4\times 10^{11}} \omega,\ \ \text{on } [-3/4,3/4].\]
\end{theorem}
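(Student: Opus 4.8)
The plan is to follow the outer-function strategy of Jin--Zhang, modifying only the choice of the auxiliary regularizing function so that the lower bound holds pointwise on all of $[-3/4,3/4]$ rather than merely in $L^2$. First I would introduce a smooth cutoff-type function $k(x)$ which equals a positive constant on a neighborhood of the origin (say on $[-1,1]$, or on an interval comfortably containing $[-3/4,3/4]$), is comparable to $\log(x^2+1)$ for large $|x|$, and satisfies $0\le k \le \log(x^2+1)$ everywhere together with good control on $\mathcal H(k)'=\mathcal H_0(k')$. The point of making $k$ constant near $0$ is that $e^{-k}$ is then bounded below by a fixed positive constant on $[-3/4,3/4]$, and since $k'$ is supported away from the origin, the Hilbert transform $\mathcal H_0(k')$ is a smooth bounded function there; the model computation \eqref{eq:Hlogx^2+1'} shows the decay rate one should expect, and the modification only changes things by an $O(1)$ amount with explicit constants. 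I would then set $\Omega_1 := \Omega + \tfrac{20}{\sigma^{?}}\,(\text{something})\cdot k$ — more precisely, add to $\Omega$ a multiple of $k$ chosen so that (a) the new weight $\omega_1 = e^{-\Omega_1}$ still satisfies $\mathcal L(\omega_1)>-\infty$ because $\int \log(x^2+1)/(1+x^2)\,dx<\infty$, (b) $\omega_1^2 e^{2\pi i\sigma x}$ is forced to be an outer function (this is where one uses that the extra multiple of $k$ dominates the requisite logarithmic term so that $\omega_1$ genuinely arises as the modulus of an $H^2$ outer function and the phase shift by $\sigma$ stays in the outer class), and (c) $\|\mathcal H(\Omega_1)'\|_\infty$ remains controlled; this last is where the hypothesis $\|\mathcal H(\Omega)'\|_\infty \le \tfrac{\pi}{2}\sigma$ and the smallness $\sigma<1/10$ get consumed, together with the explicit bound on $\mathcal H_0(k')$.

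Next I would invoke Lemma \ref{lem:seven} with this $\omega_1$: since $\omega_1\le 1$, $\mathcal L(\omega_1)>-\infty$, and $\omega_1^2 e^{2\pi i\sigma x}$ is outer, there exists $\psi\in L^2(\R)$ with $\supp\psi\subset[0,\sigma]$ and $|\what\psi|=\omega_1$. The upper bound $|\what\psi|=\omega_1=e^{-\Omega_1}=\omega\, e^{-(\text{nonneg.})}\le\omega$ is immediate since the added term is nonnegative. For the lower bound on $[-3/4,3/4]$ I would write $|\what\psi|=\omega\cdot e^{-(\Omega_1-\Omega)}=\omega\cdot e^{-c\,k(x)}$ with $c$ the explicit constant used above, and observe that on $[-3/4,3/4]$ one has $k(x)=k(0)$, a fixed constant, so $e^{-c\,k(x)}$ is a fixed positive number; tracking the numerology of $c$ and $k(0)$ against the powers of $\sigma$ that enter when one forces the outer-function condition yields exactly the claimed constant $\sigma^{10}/(4\times 10^{11})$. (Here one should be a little careful: the exponent $10$ and the constant $4\times 10^{11}$ are artifacts of the bookkeeping, and the proof should just verify that the chosen $c$ and $k$ produce a lower bound no worse than this; I would not optimize.)

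The main obstacle is step (b): arranging that $\omega_1^2 e^{2\pi i\sigma x}$ is honestly an outer function, not merely a nonnegative weight with convergent logarithmic integral. One needs $\log\omega_1 = -\Omega_1 \in L^1(\R,\langle x\rangle^{-2}dx)$ and the phase $\mathcal H(\Omega_1)$ together with the linear phase $2\pi\sigma x$ to be the harmonic conjugate of $-\log|\omega_1^2 e^{2\pi i\sigma x}|$ up to a constant — equivalently the function must have no singular inner factor and no Blaschke product. This is precisely where the bound $\|\mathcal H(\Omega_1)'\|_\infty \le \tfrac{\pi}{2}\sigma$ is needed (it controls the oscillation of the phase relative to the linear term $2\pi\sigma x$), and it is the reason the hypothesis is stated in terms of the Hilbert transform of $\Omega$ rather than $\Omega'$ directly, as explained in \cite{seven}. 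I expect the verification to be a careful but routine adaptation of \cite[Section 3]{JZ}; the only genuinely new checking is that making $k$ locally constant near $0$ does not spoil any of these estimates, which follows because $k'$ is then supported away from the origin so all the relevant Hilbert transforms are smooth and small on the interval of interest.
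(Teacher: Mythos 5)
There is a genuine gap at the heart of your step (b), and it stems from a misreading of what the auxiliary function $k$ must be. In the paper's argument (following \cite[Section 3]{JZ}), $k$ is not a smooth additive regularizer of $\Omega$ comparable to $\log(x^2+1)$; it is the \emph{integer-valued staircase} $k(x)=\lfloor s_0(x)/\pi\rfloor$ (or $\lfloor s_0(x)/\pi-\tfrac12\rfloor$, the choice being made precisely so that $k$ is constant on $[-5/4,5/4]$), where $s_0(x)=\pi\sigma x+\mathcal{H}(\Omega_0)(x)$ is the target phase and $\Omega_0=\Omega+5\log(x^2+T^2)$ with $T=20/(\pi\sigma)$. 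The outer-function condition is produced as follows: one sets $s=s_0-\pi k-\pi/2$, which is \emph{bounded} by $\pi$, defines the corrector $m=e^{-\mathcal{H}(s)}$, and takes $\tilde\omega=\tfrac13 m\,\omega_0$. The Hilbert-transform inversion formula then gives $\mathcal{H}(-2\log\tilde\omega)=2\pi\sigma x-2\pi k(x)+\mathrm{const}$, and \emph{because $k$ takes values in $\Z$} one has $e^{-2\pi i k}=1$, so $\tilde\omega^2e^{2\pi i\sigma x}$ is exactly an outer function. Adding a smooth positive multiple of a $\log(x^2+1)$-type function to $\Omega$, as you propose, does nothing to align the phase of $\omega_1^2e^{2\pi i\sigma x}$ with the harmonic conjugate of $-\log\omega_1^2$ modulo $2\pi\Z$ plus a constant; your sentence asserting that ``the extra multiple of $k$ dominates the requisite logarithmic term so that \dots the phase shift by $\sigma$ stays in the outer class'' does not describe any mechanism, and this is exactly the step you yourself flag as the main obstacle without resolving it. The role of smooth regularization is played by the separate factor $(x^2+T^2)^{-5}$, whose purpose is to absorb the logarithmic growth of $\mathcal{H}(s)$ and keep $\tilde\omega\le\omega$ and $\tilde\omega\in L^2$.

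Two further points that your sketch conflates. First, the hypothesis $\|\mathcal{H}(\Omega)'\|_\infty\le\tfrac{\pi}{2}\sigma$ is consumed in showing that $s_0$ is \emph{non-decreasing} with $\|s_0'\|_\infty\le 2\pi\sigma$ (the added term contributes $\mathcal{H}(5\log(x^2+T^2))'=-10T/(x^2+T^2)$, controlled by the choice of $T$); monotonicity of $s_0$ makes the staircase $k$ non-decreasing, which yields the one-sided global bound $\mathcal{H}(s)\ge-1-6\log(|x|+2)$ and hence the upper bound $|\what{\psi}|\le\omega$. Second, the local constancy of $k$ near the origin is used to get a \emph{two-sided} bound $|\mathcal{H}(s)|<7$ on $[-3/4,3/4]$ (the principal-value piece of the integral loses its sign control wherever $k$ jumps), and the numerical constant is just $\tfrac13(1+T^2)^{-5}e^{-7}\ge\sigma^{10}/(4\times10^{11})$. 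So your instinct that the new ingredient relative to \cite{JZ} is making $k$ locally constant near $0$ is correct, but the object this applies to is the integer staircase in the phase, not a regularizer added to the modulus.
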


\begin{proof}
We first set 
\[\omega_0(x)=\frac{\omega(x)}{(x^2+T^2)^5},\ \ \Omega_0(x)=-\log{(\omega_0(x))},\]
with constant $T$ that will be specified later.
We then have
\[
\Omega_0=\Omega+5\log{(x^2+T^2)}.
\]

We compute
\EQ{\nn
\mathcal{H}(\log{(x^2+T^2)})(0)
=\lim_{\varepsilon\to 0^+}\int_{|t|\geq \varepsilon}  \log(t^2+T^2)\Big(\frac{1}{-t}-\frac{t}{t^2+1}\Big)\, d t,
}
in which the Integrand is an odd function. Hence the integration is zero. 
Therefore we have
\EQ{\label{eq:HOmega0=HOmega}
\mathcal{H}(\Omega_0)(0)=\mathcal{H}(\Omega)(0)+5\mathcal{H}(\log{(x^2+T^2)})=\mathcal{H}(\Omega)(0).
}

By \eqref{eq:Hlogx^2+1'}, we compute
\[
\mathcal{H}(\log(x^2+T^2))'=T^{-1}\mathcal{H}(\log(x^2+1))'(\cdot/T)=-\frac{2T}{x^2+T^2}.
\]
Thus if we choose $T=\frac{20}{\pi \sigma}\geq \frac{200}{\pi}\geq 60$, we have
\EQ{\label{eq:HOmega0'}
\|\mathcal{H}(\Omega_0)'\|_{L^\infty}\leq \|\mathcal{H}(\Omega)'\|_{L^\infty}+5\|\mathcal{H}(\log(x^2+T^2))'\|_{L^\infty}\leq \pi \sigma.
}

Let us define 
\[s_0(x)=\pi \sigma x+\mathcal{H}(\Omega_0)(x).\]
Hence by \eqref{eq:HOmega0=HOmega},
\[s_0(0)=\mathcal{H}(\Omega)(0),\]
depending only on $\omega$.

Let $s(x)$ be defined as follows
\[s(x)=s_0(x)-\pi k(x)-\frac{\pi}{2},\]
in which
\EQ{\label{def:k(x)}
k(x)=
\begin{cases}
\lfloor \frac{s_0(x)}{\pi} \rfloor,\ \ \ \ \ \quad \text{if } \frac{s_0(0)}{\pi} \in \left[\frac{1}{4}, \frac{3}{4}\right] \mod 1,\\
\\
\lfloor \frac{s_0(x)}{\pi} -\frac{1}{2} \rfloor,\ \ \text{if } \frac{s_0(0)}{\pi} \in \left[0, \frac{1}{4}\right) \bigcup  \left(\frac{3}{4}, 1\right) \mod 1,
\end{cases}
}
Note that our definition of $k(x)$ is different from that in \cite{JZ}. 
We modify the definition in order to make sure $k(x)$ is a constant near $x=0$. 
This will be explained and used later in the proof.

By \eqref{eq:HOmega0'}, $s_0(x)$ is a non-decreasing function and so is $k$.
Note also that by our definition of $s(x)$, we have
\EQ{\label{eq:sLinfty}
\|s\|_{L^\infty}\leq \pi.
}

Let $m=e^{-M}$ where $M=\mathcal{H}(s)$.
Next, we will estimate $M(x)=\mathcal{H}(s)(x)$.
We split the integral into three parts $M(x)=J_1(x)+J_2(x)+J_3(x)$, where
\EQ{\nn
J_1(x)&=\frac{1}{\pi}\int_{|x-t|<1/2} \frac{s(t)-s(x)}{x-t}\, dt;\\
J_2(x)&=\frac{1}{\pi}\int_{|x-t|<1/2} s(t)\frac{t}{t^2+1}\, dt;\\
J_3(x)&=\frac{1}{\pi}\int_{|x-t|\geq 1/2} s(t) \Big(\frac{1}{x-t}+\frac{t}{t^2+1}\Big)\, dt.
}
We estimate $J_2$ and $J_3$ in the same way as in \cite{JZ}. 
By \eqref{eq:sLinfty}, we have
\EQ{\label{eq:J2}
|J_2(x)|\leq \frac{1}{\pi}\cdot \|s\|_{L^\infty}\cdot \frac{1}{2}\leq \frac{1}{2}.
}
Also, we have
\EQ{\label{eq:J3}
|J_3(x)|\leq \frac{1}{\pi}\cdot \|s\|_{L^\infty} \int_{|x-t|\geq 1/2}\left| \frac{1}{x-t}+\frac{t}{t^2+1}\right|\, dt\leq 6\log{(|x|+2)}.
}

Finally, we need to bound $|J_1|$. By \eqref{eq:HOmega0'}, $s_0(x)=\pi \sigma x+\mathcal{H}(\Omega_0)(x)$ is non-decreasing with 
$\|s_0'\|_{L^\infty}\leq 2\pi \sigma$.
Since we assume $0<\sigma<1/10$, we have 
\[\|\pi^{-1} s_0'\|_{L^\infty}<\frac{1}{5}.\] 
This leads to the following
\begin{itemize}
\item if $\pi^{-1}s_0(0)\in [1/4, 3/4] \mod 1$, 
\[\frac{s_0(x)}{\pi}\in (0,1) \mod 1,\ \ \forall x\in \left[-\frac{5}{4}, \frac{5}{4}\right].\]
\item if $\pi^{-1}s_0(0)\in [0, 1/4)\cup (3/4,1) \mod 1$,
\[\frac{s_0(x)}{\pi}-\frac{1}{2} \in (0,1) \mod 1,\ \ \forall x\in \left[-\frac{5}{4}, \frac{5}{4}\right].\]
\end{itemize}
Recall our definition of $k(x)$ in \eqref{def:k(x)}, we know in each case $k(x)$ is a constant function on the interval $[-5/4, 5/4]$.

Thus for $x\in [-3/4,3/4]$, we have
\EQ{\label{eq:J10}
|J_1(x)|\leq \frac{1}{\pi}\int_{|x-t|<1/2}\left|\frac{s_0(t)-s_0(x)}{x-t}\right|\, dt\leq \frac{1}{\pi} \|s_0'\|_{L^\infty}\leq 2\sigma.
}
For all $x$, we only have a lower bound of $J_1$. 
Since $k$ is non-decreasing, we have
\EQ{\label{eq:J1all}
J_1(x)\geq \frac{1}{\pi}\int_{|x-t|<1/2}\frac{s_0(t)-s_0(x)}{x-t}\, dt\geq -2\sigma.
}

Now combining \eqref{eq:J2}, \eqref{eq:J3} with \eqref{eq:J10}, we have the following estimate of $M$ on $[-3/4,3/4]$:
\EQ{\label{eq:M0}
|M(x)|\leq 2\sigma+\frac{1}{2}+6\log{\frac{11}{4}}<7.
}
Using \eqref{eq:J1all} instead of \eqref{eq:J10}, we obtain that for all $x$,
\EQ{\label{eq:Mall}
M(x)\geq -2\sigma-\frac{1}{2}-6\log{(|x|+2)}>-1-6\log(|x|+2).
}

Next we will apply Lemma \ref{lem:seven} to $\tilde{\omega}=\frac{1}{3}m \omega_0$.
We check that $\tilde{\omega}$ satisfies all the assumptions.
First, by \eqref{eq:Mall}, we have
\[
0\leq \tilde{\omega}\leq \frac{e}{3}(|x|+2)^6 \omega_0\leq \frac{\omega}{x^2+T^2}.
\]
Hence $0\leq \tilde{\omega}\leq \omega$ and $\tilde{\omega}\in L^2$. Moreover
\[
\mathcal{L}(\tilde{\omega})=\mathcal{L}(m/3)+\mathcal{L}(\omega_0)>-\infty.
\]
By the construction $M=\mathcal{H}(s)$ and the inversion formula \eqref{eq:Hinversion}, we have
\[
\mathcal{H}(-2M-2\Omega_0)=2s-2\mathcal{H}(\Omega_0)-2c(M)=2\pi \sigma x-2\pi k(x)-\pi-2c(M),
\]
where $k(x)\in \Z$ and $c(M)$ is a real constant.
Therefore for some constant $a$ with $|a|=1$, we have
\[
\tilde{\omega}^2 e^{2\pi i \sigma x}=\frac{1}{9}e^{-2M-2\Omega_0+2\pi i \sigma x}=\frac{a}{9}e^{-2M-2\Omega_0+i\mathcal{H}(-2M-2\Omega_0)},
\]
which shows $\tilde{\omega}^2 e^{2\pi i \sigma x}$ is an outer function.

By Lemma \ref{lem:seven}, there exists $\psi\in L^2$ with $\supp(\psi)\subset [0,\sigma]$ and $|\what{\psi}|\leq \tilde{\omega}\leq \omega$. 
Furthermore, on $[-3/4,3/4]$, by \eqref{eq:M0}, and since $T=\frac{20}{\pi \sigma}$, we have
\[
|\what{\psi}(x)|=\tilde{\omega}(x)\geq \frac{1}{3}(1+T^2)^{-5} e^{-7} \omega(x)\geq \frac{\sigma^{10}}{4\times 10^{11}}\omega(x),
\]
as claimed.
\end{proof}

\subsection{Multiplier adapted to the regular sets}
Now we are in the place to finish the proof of Lemma \ref{lem:regularsetdamping}.
\begin{proof}
The proof is the same as Lemma 4.1 of \cite{JZ}, except that we will use Theorem \ref{thm:multiplier} instead of Theorem 3.2 of \cite{JZ}.
We briefly go through the various constants below.

It is shown in \cite{JZ} that one can construct a weight function $\omega$ such that
\EQ{\nn
&\omega(\xi)=\exp(-\la \xi\ra^{1/2})\geq 0.3,\ \ \forall \xi \in [-1,1],\\
&\omega(\xi)\leq \exp(-\la \xi\ra^{1/2}),\ \ \forall \xi\in \R,\\
&\omega(\xi)\leq \exp(-\Theta(|\xi |) |\xi|),\ \ \forall \xi \in Y,\  |\xi |\geq 10,\\
&\|\mathcal{H}(\omega)'\|_{L^\infty}\leq \frac{\iota^{-1} C_R^2}{\delta_1(1-\delta_1)},
}
where $0<\iota<1$ is an absolute constant.

Applying Theorem \ref{thm:multiplier} to $\omega^{c_3}$ with
\[\sigma=\frac{c_1}{5},\ \ c_3=\frac{\pi}{10}\iota\, c_1 C_R^{-2} \delta_1(1-\delta_1)<1.\]
We obtain $\psi$ with 
\EQ{\nn
&\supp\psi\subset \left[0, \frac{c_1}{5}\right],\\
&|\what{\psi}(\xi)|\geq \frac{c_1^{10}}{4 \times 10^{18}} \omega(\xi)^{c_3}\geq  \frac{3}{4\times 10^{19}} c_1^{10}, \ \ \forall \xi\in [-3/4,3/4],\\
&|\what{\psi}(\xi)|\leq \exp(-c_3 \la \xi\ra^{1/2}),\ \ \forall \xi \in \R,\\
&|\what{\psi}(\xi)|\leq \exp(-c_3 \Theta(|\xi |) |\xi |),\ \ \forall \xi \in Y,\ |\xi |\geq 10.
}
Finally, shifting $\psi$ by $c_1/10$ yields the desired function. 
\end{proof}

\end{document}